\numberwithin{equation}{section}
 \def\pd#1#2{\dfrac{\partial#1}{\partial#2}}
\newcommand{\mf}{\mathfrak}
\newcommand{\g}{\mf{g}}\newcommand{\cac}{\delta}
\newcommand{\h}{\mf{h}}
\newcommand{\pip}{t}
\newtheorem{theorem}{Theorem}[section]
\newtheorem{proposition}[theorem]{Proposition}
\newtheorem{lemma}[theorem]{Lemma}
\newtheorem{corollary}[theorem]{Corollary}
\theoremstyle{remark}
\newtheorem{remark}[theorem]{Remark}
\newtheorem{definition}[theorem]{Definition}
\newcommand{\A}{\mathcal{A_\mathfrak{R}}}
\begin{document}
  \title{The adjoint representation inside the exterior algebra of a simple Lie algebra}
  \author{Corrado De Concini\\Paolo Papi\\Claudio Procesi}

  \keywords{Exterior algebra, invariants, transgression.}
  \subjclass[2010]{17B20}
\maketitle

\begin{abstract} For a simple complex Lie algebra $\g$ we study the space of invariants $A=\left( \bigwedge \mathfrak g^*\otimes\mathfrak g^*\right)^{\mathfrak g}$, which describes the isotypic component of type $\g$ in $ \bigwedge \mathfrak g^*$, as a module over the algebra of invariants $\left(\bigwedge \mathfrak g^*\right)^{\mathfrak g}$.  As main result  we prove that $A$ is a  free module, of rank twice the rank of $\g$, over the exterior algebra generated by all primitive invariants in $(\bigwedge \mathfrak g^*)^{\mathfrak g}$,  with the exception of the one of highest degree.
\end{abstract}
\section{Introduction}
  Let $\mathfrak g$ be a simple Lie algebra (over $\mathbb C$) of dimension $n$ and rank $r$. Fix a Cartan subalgebra $\h$ in $\g$. Let $\Delta$ be the corresponding root system, $W$ the Weyl group, $\Delta^+$ a positive system.
 We use  the Killing form $(\cdot, \cdot)$  to identify $\g$ and $\g^*$ when convenient.\par
The exterior algebra $\bigwedge \mathfrak g$ has been extensively studied as representation of $\mathfrak g$: see e.g. \cite{Kostant}, \cite{K}, \cite{R}, \cite{B}. 
The invariant algebra  $\Gamma=(\bigwedge \mathfrak g^*)^{\mathfrak g}$  is the  cohomology of $\g$ and it is an exterior algebra  $\bigwedge(P_1,\ldots,P_r)$ over primitive generators $P_i$ of degree  $2m_i+1$, where the integers $m_i,$ with $  m_1\leq\ldots\leq m_r$ are the {\em exponents} of $\Delta$, \cite{Bou}.

Among the other isotypic components, of particular interest is the component of type $\mathfrak g$, which  is completely described as: \begin{equation}
\label{inv} A:= \left( \bigwedge \mathfrak g^*\otimes\mathfrak g^*\right)^{\mathfrak g}=\hom(\bigwedge \mathfrak g,\mathfrak g^* )^{\mathfrak g} =\hom(\mathfrak g ,\bigwedge \mathfrak g^*)^{\mathfrak g}. 
\end{equation} So $A$ is the space of  multilinear alternating functions from   $\g$ to $\g^*$ which are  $\mathfrak g$-equivariant.
\smallskip

By the work of Kostant \cite{K}, it is known that $\dim(A)= 2^rr$. Notice that the $\mathbb Z/2\mathbb Z$ grading of $\bigwedge\g^*$  allows to define a Lie superalgebra structure on $A$.

The Poincar\'e polynomial $GM (q)$  describing the dimension of $A$ in each degree is given by a formula conjectured by Joseph and proved by  Bazlov  \cite{B}:
\begin{equation}
\label{ps}GM (q)=(1+q^{-1}) \prod_{
i=1}^{r-1}
(1+q^{2m_i+1})\sum_{
i=1}^r
q^{2m_i}.  \end{equation} 
Clearly $A$ is a (left or right) module over the  exterior algebra $\bigwedge(P_1,\ldots,P_{r }).$
Writing the right hand side  of \eqref{ps} as $\prod\limits_{
i=1}^{r-1}(1+q^{2m_i+1})
\sum\limits_{
i=1}^r[
q^{2m_i}+q^{2m_i-1}]$ suggests that there might exist  
 elements $f_i,u_i\in A$ of degrees $2m_i,2m_i-1$ which generate $A$ as a free module over $\bigwedge(P_1,\ldots,P_{r-1}).$ This is indeed our main result. 
\subsection{Description of the results}
The Killing form of $\g$ induces
on $A$  a graded symmetric form which will be denoted $e(a,b)$, with values in $\Gamma$ (see \eqref{formae}). With the elements $f_i,u_i\in A$ as in Definition \ref{deffu} we have:
 
\begin{theorem}\label{main}
  $A$ is a free module, with basis the elements $f_i,u_i, i=1,\ldots,r,$ over the exterior algebra $\bigwedge(P_1,\ldots,P_{r-1}).$
\end{theorem}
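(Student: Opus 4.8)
The plan is to reduce Theorem~\ref{main} to a statement about a single $\Gamma'$-linear map and then to prove that statement with the help of the $\Gamma$-valued form $e$ and the explicit description of $f_i,u_i$ in Definition~\ref{deffu}. Set $\Gamma'=\bigwedge(P_1,\ldots,P_{r-1})$ and let $\phi\colon(\Gamma')^{\oplus 2r}\to A$ be the $\Gamma'$-linear map sending the standard generators to $f_1,\ldots,f_r,u_1,\ldots,u_r$, where $\deg f_i=2m_i$ and $\deg u_i=2m_i-1$. The source has Hilbert series $\prod_{i=1}^{r-1}(1+q^{2m_i+1})\sum_{i=1}^r(q^{2m_i}+q^{2m_i-1})$, which by the rewriting of \eqref{ps} recorded in the introduction is exactly $GM(q)$. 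Hence $\phi$ is a degree-preserving map between graded vector spaces of equal finite Hilbert series, and it is an isomorphism as soon as it is injective, or as soon as it is surjective; in the latter case, by graded Nakayama over the graded-local ring $\Gamma'$, it suffices that the images of the $f_i,u_i$ span $A/\Gamma'_{+}A$. So the theorem reduces either to the injectivity of $\phi$ (no nontrivial $\Gamma'$-linear relation among the $f_i,u_i$) or to the generation of $A$ over $\Gamma'$ by the $f_i,u_i$.

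The tool for this is the bilinear form $e$ of \eqref{formae}. I would first record its formal properties: $e$ is graded symmetric, $\Gamma$-sesquilinear in the sense $e(Pa,b)=P\,e(a,b)$ for $P\in\Gamma$, degree-additive ($e(A_d,A_{d'})\subseteq\Gamma_{d+d'}$), and nondegenerate on $A$. Since $\Gamma=\bigwedge(P_1,\ldots,P_r)$ is an exterior algebra on generators of total degree $\sum_i(2m_i+1)=n=\dim\g$, its top piece $\Gamma_n=\CC\,P_1\wedge\cdots\wedge P_r$ is one-dimensional; composing $e$ with the projection $\Gamma\to\Gamma_n$ gives, for each $d$, a $\CC$-bilinear pairing $A_d\times A_{n-d}\to\CC$, which — using that $GM(q)$ is palindromic about $n/2$, itself a consequence of the symmetry $m_i+m_{r+1-i}=h$ of the exponents — one expects to be perfect, so that $A$ is a Poincar\'e-duality graded vector space with socle $\Gamma_n$. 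The significance of Definition~\ref{deffu} should be that the degrees are tailored to this duality: $n-\deg f_i=\deg(P_1\wedge\cdots\wedge P_{r-1})+\deg u_{r+1-i}$ and $n-\deg u_i=\deg(P_1\wedge\cdots\wedge P_{r-1})+\deg f_{r+1-i}$, so that the $f_i,u_i$ sit, degree by degree, opposite the would-be top-degree basis vectors $(P_1\wedge\cdots\wedge P_{r-1})u_{r+1-i}$ and $(P_1\wedge\cdots\wedge P_{r-1})f_{r+1-i}$ of the free module.

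The heart of the argument is then the explicit evaluation of $e$ on the $f_i,u_i$. Writing $\Gamma=\Gamma'\oplus P_r\Gamma'$ and using degree additivity, the top-degree pairings collapse: $e\bigl(f_i,(P_1\wedge\cdots\wedge P_{r-1})u_\ell\bigr)$ is a scalar multiple $\mu_{i\ell}\,P_1\wedge\cdots\wedge P_r$, where $\mu_{i\ell}$ is (up to sign) the coefficient of $P_r$ in $e(f_i,u_\ell)$, nonzero only when $m_i+m_\ell=h$, while $e\bigl(f_i,(P_1\wedge\cdots\wedge P_{r-1})f_\ell\bigr)$ and $e\bigl(u_i,(P_1\wedge\cdots\wedge P_{r-1})u_\ell\bigr)$ vanish for parity reasons. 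The key computation from Definition~\ref{deffu} is therefore that the $r\times r$ matrix $\mu=(\mu_{i\ell})$ — block-diagonal according to the value of the exponent, with a single $2\times2$ block in the repeated-exponent case $D_{2k}$ — is nonsingular, and similarly for the matrix obtained from the $u$'s. Granting this, injectivity of $\phi$ follows: from a putative relation $\sum_i a_i f_i+\sum_i b_i u_i=0$ with $a_i,b_i\in\Gamma'$, pairing with all $(P_1\wedge\cdots\wedge P_{r-1})f_\ell$ and $(P_1\wedge\cdots\wedge P_{r-1})u_\ell$ and, more generally, writing the full Gram matrix as $G=G_0+P_rG_1$ over $\Gamma'$ (so $v^{\top}G=0$ with $v\in(\Gamma')^{2r}$ means $v^{\top}G_0=v^{\top}G_1=0$), one uses that $G_0$ is the Gram matrix for the reduced form $\bar e=e\bmod P_r$ and that the ``leading part'' of the system is governed by $\mu$; nonsingularity of $\mu$ together with the nondegeneracy properties of $\bar e$ forces $a_i=b_i=0$. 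By the Hilbert-series count $\phi$ is then an isomorphism, proving Theorem~\ref{main}.

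The main obstacle is twofold and is concentrated in the last two steps. First, one must actually carry out the computation of $e$ on the explicit elements of Definition~\ref{deffu} — this is where their construction out of the primitive invariants $P_i$, their polarizations, and a transgression/contraction device is used essentially — and in particular prove that the ``$P_r$-coefficient'' matrix $\mu$ is nonsingular; controlling precisely this interaction with the single omitted generator $P_r$ is the delicate point, since it is exactly the obstruction to $A$ being free over all of $\Gamma$. Second, one must check that the socle pairing, which a priori only constrains the bottom of $\phi$, does combine with the reduction $\bar e$ and its $P_r$-component to give full $\Gamma'$-injectivity; this amounts to a fine enough understanding of $A$ as a module over $\Gamma=\Gamma'[P_r]/(P_r^2)$, e.g. to showing that $v^{\top}G_0=v^{\top}G_1=0$ has only the trivial solution in $(\Gamma')^{2r}$. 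By contrast, the Hilbert-series reduction and the formal properties of $e$ are routine.
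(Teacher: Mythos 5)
Your overall strategy---reduce to the absence of nontrivial relations over $\bigwedge(P_1,\ldots,P_{r-1})$ plus a dimension count, then exploit the form $e$ and the fact that $e(f_i,u_{r-i+1})$ is a nonzero multiple of $P_r$---is the same as the paper's, but both decisive steps are left unproven, and the one deduction you do sketch fails as stated. Pairing a relation $\sum_i a_if_i+\sum_i b_iu_i=0$ (with $a_i,b_i\in\bigwedge(P_1,\ldots,P_{r-1})$) against the top-degree elements $(P_1\wedge\cdots\wedge P_{r-1})\,u_\ell$ detects only the degree-zero parts of the coefficients: by $\Gamma$-linearity of $e$, such a pairing produces terms $\pm\,a_i\wedge P_1\wedge\cdots\wedge P_{r-1}\wedge e(f_i,u_\ell)$, which vanish as soon as $a_i$ has positive degree, since any $P_k$ with $k\le r-1$ kills $P_1\wedge\cdots\wedge P_{r-1}$. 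So the socle pairing alone cannot force $a_i=b_i=0$, and the statement you then fall back on ($v^{\top}G_0=v^{\top}G_1=0$ only for $v=0$ for the Gram matrix over $\bigwedge(P_1,\ldots,P_{r-1})$) is precisely what has to be proved; you explicitly list it as an open obstacle rather than proving it. The paper closes this gap by two maneuvers you do not use: first it applies $\cac\otimes 1$ to the relation, using $(\cac\otimes1)u_i=f_i$ and $(\cac\otimes1)f_i=0$, to reduce to a relation among the $f_i$ alone (Proposition \ref{fun} via Lemma \ref{Ma}); then, given a homogeneous relation $\sum_j\mu_jf_j=0$, it multiplies by a suitable element of $\bigwedge(P_1,\ldots,P_{r-1})$ so that the coefficient of the surviving $f_j$ of largest exponent becomes exactly $P_1\wedge\cdots\wedge P_{r-1}$, notes that the coefficients of the $f_h$ with $m_h<m_j$ then exceed the top degree and must vanish, and finally pairs with the single element $u_{r-j+1}$ to obtain $c_j\,P_1\wedge\cdots\wedge P_r\neq0$ via \eqref{t}, with \eqref{t1} handling the two equal-degree generators in type $D_{2n}$.

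The second gap is that the evaluation of $e$ on the $f_i,u_i$---above all that $e(f_i,u_{r-i+1})=c_iP_r$ with $c_i\neq0$, and the vanishing $e(f_n,u_n)=e(f_{n+1},u_{n+1})=0$ in type $D_{2n}$---is the actual mathematical core of the theorem, and you defer it entirely ("one must actually carry out the computation"). In the paper this occupies the transgression machinery: $e(f_i,f_j)=e(u_i,u_j)=0$, the identity \eqref{latrai} expressing $e(u_i,f_j)$ through $\pip((d\psi_i,d\psi_j))$, Proposition \ref{lap} on the Bezoutiante (antidiagonal entries are nonzero multiples of $\psi_r$ modulo $(R^+)^2$, proved by restricting to $\h$ and using Coxeter-eigenvector coordinates), and the explicit Pfaffian computation for $D_{2n}$. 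A minor further point: you invoke Bazlov's graded formula \eqref{ps} for the Hilbert-series count, whereas the paper deliberately uses only Kostant's ungraded count $\dim A=2^rr$, which suffices and keeps Theorem \ref{main} independent of \eqref{ps} (which it then reproves); your use of \eqref{ps} is not circular, but it is an unnecessary dependency.
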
 The main tool needed to prove this Theorem is a description of the form $e(-,-)$  on the proposed generators $f_i,u_i$, which is given by the following result. Recall that the primitive invariants are uniquely defined up to a non zero constant if the exponents are distinct. The latter condition is always verified unless
  $\g$ is of type $D_{2n}$; in this case $m_n=m_{n+1}=2n-1$.
\begin{proposition}\label{main1} Assume that the exponents of $\g$ are distinct. Then the two submodules  spanned by the $f_i$ and by  the $u_j$ are isotropic for the form  $e$. 
For each pair $i,j$ there exists a  non-zero rational constant  $c_{i,j}$ such that
\begin{equation}\label{forrmae}
e(f_i,u_j)=e(f_j,u_i)=\begin{cases}
c_{i,j}P_k\quad&\text{if}\quad m_i+m_j-1=m_k\quad \text{is an exponent,}\\
0\quad &\text{otherwise.}
\end{cases}
\end{equation}
\end{proposition}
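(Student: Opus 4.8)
The plan is to reduce the whole statement to the two generators $f_i,u_j$, using that $e$ is graded $\Gamma$-bilinear: $e(P_Sf_i,P_Tf_j)=\pm P_SP_T\,e(f_i,f_j)$ and similarly for the other pairs, so the isotropy assertions become $e(f_i,f_j)=0$ and $e(u_i,u_j)=0$, and the last formula becomes a computation of $e(f_i,u_j)$. From Definition~\ref{deffu} the features I will use are: $f_i$ has $\Gamma$-degree $2m_i$ and is a cocycle for the differential $d$ induced on $A$ by the Chevalley--Eilenberg differential of $\bigwedge\g^*$ (indeed, up to a scalar, $f_i=i_\bullet P_i$, the contraction of the primitive invariant $P_i$ in the $\g^*$-argument, which is $d$-closed since $P_i$ is invariant and $dP_i=0$), while $u_i$ has $\Gamma$-degree $2m_i-1$ and is a transgression element, so $du_i$ is $f_i$ up to a fixed nonzero scalar; such $u_i$ exist because $(A,d)$ is acyclic ($H^\bullet(\g)$ is a trivial $\g$-module and $(\g^*)^\g=0$). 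Record the degrees: $e(f_i,f_j)$ lies in degree $2m_i+2m_j$, $e(u_i,u_j)$ in $2(m_i+m_j-1)$ --- both even --- and $e(f_i,u_j)$ in the odd degree $2(m_i+m_j)-1$.

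Since $e$ is built from the wedge product and a contraction, $d$ being a wedge-derivation, one has the Leibniz rule $d\,e(a,b)=e(da,b)+(-1)^{\deg a}e(a,db)$, while $e(a,b)\in\Gamma$ is automatically $d$-closed. Applying this with $a=f_i$, $b=u_j$ and $df_i=0$, $du_j\propto f_j$ yields $e(f_i,f_j)=0$, so the $f$-module is isotropic. Applying it with $a=u_i$, $b=u_j$ and $du_i\propto f_i$, $du_j\propto f_j$ yields $e(f_i,u_j)\propto e(u_i,f_j)$; the parities ($u$ odd, $f$ even) make the graded symmetry of $e$ rewrite this as $e(f_j,u_i)$, giving the symmetry $e(f_i,u_j)=e(f_j,u_i)$ in the statement.

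The core is the remaining vanishing $e(u_i,u_j)=0$ and the purity of $e(f_i,u_j)$, i.e. that it is a scalar multiple of a single $P_k$ rather than an arbitrary element of $\Gamma$ in that degree. The mechanism I would invoke is that $e$ sends primitive elements to primitive ones: $\Gamma=H^\bullet(\g)$ is a Hopf algebra whose primitive space is $\mathrm{span}(P_1,\dots,P_r)$, the generators $f_i$ (built from a primitive) and $u_i$ (a transgression) are primitive in the $\g$-isotypic subcomodule of $\bigwedge\g^*$, and $e$ intertwines the comultiplications. Hence $e(f_i,f_j)$, $e(u_i,u_j)$, $e(f_i,u_j)$ are primitive in $\Gamma$; a primitive of $\Gamma$ lies in $\mathrm{span}(P_1,\dots,P_r)$, so vanishes in even degree (re-proving the first two) and, in the odd degree $2(m_i+m_j)-1$, is forced to be a multiple of the unique primitive of that degree, i.e. $c_{ij}P_k$ with $m_k=m_i+m_j-1$ (or $0$ if no exponent equals $m_i+m_j-1$) --- and distinctness of the exponents is exactly what makes ``the $P_k$'' unambiguous. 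One must be careful that the coproduct here is the intrinsic one on $H^\bullet(\g)$, not the shuffle coproduct of $\bigwedge\g^*$; in practice this step might instead be carried out by producing a bigrading of $A$, or suitable derivations of $\Gamma$, that single out $P_k$.

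I expect the real obstacle to be the non-vanishing of $c_{ij}$ when $m_i+m_j-1$ is an exponent. I would try to read it off from the explicit formulas of Definition~\ref{deffu}: expanding $e(i_\bullet P_i,u_j)$ with $u_j$ written through the transgression of the invariant polynomial $Q_j$ of degree $m_j+1$, the claim should reduce to the non-vanishing of an explicit pairing of $P_i$ against (the gradient of) $Q_j$, to be settled uniformly --- e.g. by evaluation on a principal $\bfl_2$-triple, or using Kostant's description of the generalized exponents. One could also hope to deduce it from non-degeneracy of $e$ (it comes from the Killing form together with Poincaré duality on $\bigwedge\g^*$) combined with the pairing structure just obtained, but that needs independent control of the span of the $f$'s and $u$'s and so is close to a consequence of the main theorem. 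In any case the vanishing assertions and the shape of $e(f_i,u_j)$ should come cheaply from the differential and the primitivity; pinning down that the constants are nonzero is where I expect the genuine computation to lie.
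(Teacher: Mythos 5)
Your first steps are sound and close to the paper's: the Leibniz rule for $\cac$ against the $\Gamma$-valued form $e$, together with $(\cac\otimes1)f_i=0$, $(\cac\otimes1)u_i=f_i$ and $\cac\,e(\cdot,\cdot)=0$ on invariants, does give $e(f_i,f_j)=0$ and the symmetry $e(f_i,u_j)=e(f_j,u_i)$, exactly as in the proof of \eqref{latra}. But from that point on there are two genuine gaps. First, the vanishing $e(u_i,u_j)=0$ and the \emph{purity} of $e(f_i,u_j)$ (that it is a multiple of a single primitive $P_k$) are made to rest on the claim that $e$ ``intertwines comultiplications'' so that values on the $f$'s and $u$'s are primitive in $\Gamma$; you never substantiate this, and parity alone cannot replace it, since $\Gamma$ has plenty of decomposable elements of odd degree (e.g.\ $P_a\wedge P_b\wedge P_c$). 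Note also that $e(u_i,u_j)=0$ is sensitive to the actual choice of $u_i$: any $u$ with $(\cac\otimes1)u=f_i$ differs from the one in Definition \ref{deffu} by a $\cac$-closed element, and the isotropy can fail for such a modification. The paper gets \eqref{relu} from the specific formula $u_i=2(\partial\otimes1)f_i$ combined with Koszul's orthogonality formula for $\partial$ (Lemma \ref{lacc1}), and gets purity from the transgression identity $e(u(a),f(b))+e(f(a),u(b))=\pip((da,db))$ (Proposition \ref{fine}, via the derivation property \eqref{fov1}) together with Cartan's theorem (Theorem \ref{proptransgr}) that $\pip$ kills $(R^+)^2$ and maps generators onto primitives. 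None of these mechanisms appears in your argument, and the Hopf-theoretic shortcut you gesture at is not set up (the $\g$-isotypic component $A$ is not a subcoalgebra in any sense you have defined).

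Second, and more seriously, the non-vanishing of $c_{i,j}$ when $m_i+m_j-1$ is an exponent --- which you correctly identify as the core --- is not proved at all; you only list possible strategies (principal $\mathfrak{sl}_2$, generalized exponents, or non-degeneracy of $e$, the last of which you rightly note is essentially circular). In the paper this is exactly Theorem \ref{gendi}: via the identity above and Chevalley restriction, $c_{i,j}\neq 0$ is equivalent to $(d\psi_i,d\psi_j)\notin(R^+)^2$, which is proved by a leading-term computation in Coxeter-eigenvector coordinates when $m_i+m_j\geq h$ (Proposition \ref{lap}), by explicit Newton-sum computations for the classical types, and by Givental's singularity-theoretic description of the convolution $a\circ b$ (plus explicit, partly computer-aided, calculations) for the exceptional types. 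No uniform argument is known or offered in the paper, so this case analysis is not an optional refinement but the substance of the proposition; without it your proposal establishes only the easy isotropy of the $f$-span and the symmetry statement.
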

\begin{proposition}\label{mainD} If 
 $\g$  is of type $D_{2n}$, the statements of  Proposition \ref{main1} hold true unless $m_i+m_j-1=2n-1$ or $\{i,j\}=\{n,n+1\}$. In this case we can choose $P_n$ and $P_{n+1}$ in such a way that
\begin{equation}\label{forrmaeD}
e(f_i,u_j)=e(f_j,u_i)=\begin{cases}
c_{i,j}(P_n+P_{n+1})\quad&\text{if $ i,j\not\in\{n,n+1\}$},\\
c_{i,j}P_n\quad&\text{if $ i=n$ and $j=1$},\\
c_{i,j}P_{n+1}\quad&\text{if $ i=n+1$ and $j=1$}.\\
\end{cases}
\end{equation}
and $e(f_n,u_{n+1})=c_{n,n+1}P_{2n}$, $e(f_{n+1},u_{n})=c_{n+1,n}P_{2n}$ while $e(f_n,u_n)=e(f_{n+1},u_{n+1})=0$.
\end{proposition}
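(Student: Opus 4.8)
The plan is to rerun the proof of Proposition \ref{main1}, isolating the one place where distinctness of the exponents is used. It enters only to guarantee that the homogeneous component of $\Gamma$ in degree $2(m_i+m_j-1)+1$ is at most one-dimensional, so that a value $e(f_i,u_j)$ landing in the span of the primitive generators of that degree is forced to be a multiple of a single $P_k$. Everything else in the proof of Proposition \ref{main1} --- that the spans of the $f_i$ and of the $u_j$ are $e$-isotropic, that $e(f_i,u_j)=e(f_j,u_i)$, and that $e(f_i,u_j)$ lies in the span of the \emph{primitive} invariants of the relevant degree (decomposable invariants contributing nothing) --- uses no such hypothesis and carries over unchanged. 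Thus one is reduced to understanding $e(f_i,u_j)$ for the pairs with $m_i+m_j-1=2n-1$, i.e.\ $m_i+m_j=2n$, where $\Gamma$ in degree $4n-1$ is the two-dimensional space $\langle P_n,P_{n+1}\rangle$, together with the pair $\{n,n+1\}$ and the diagonal pairs $\{n,n\}$, $\{n+1,n+1\}$, whose attached generators depend on how $\langle P_n,P_{n+1}\rangle$ is split.

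Since $m_i=2n-1$ forces $i\in\{n,n+1\}$ and hence $m_j=1$, the pairs with $m_i+m_j=2n$ are exactly those with $i,j\notin\{n,n+1\}$ (which is vacuous for $\g=D_4$) and the two pairs $\{n,1\}$, $\{n+1,1\}$; separately $e(f_n,u_{n+1})$, $e(f_{n+1},u_n)$ and the diagonal terms $e(f_n,u_n)$, $e(f_{n+1},u_{n+1})$ all lie in $\Gamma$ in degree $8n-5$, spanned by $P_{2n}$. So only a short, explicit list of pairings must be determined, and the point is to make a single choice of basis $\{P_n,P_{n+1}\}$ of the critical space --- equivalently, by Definition \ref{deffu}, of $\{f_n,f_{n+1}\}$ and $\{u_n,u_{n+1}\}$ --- for which all of \eqref{forrmaeD} and the last two displayed identities hold simultaneously.

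The main tool is the order-two diagram automorphism $\s$ of $\g=\mathfrak{so}_{4n}$ exchanging the two spin nodes. It acts compatibly on $\g$, on $\bigwedge\g^*$, on $A$ and on $\Gamma$, commuting with the module structure and with $e$, and it fixes $P_i$ for each $i\notin\{n,n+1\}$ (these being transgressions of power sums, which $\s$ fixes). On the two-dimensional space $\Gamma$ in degree $4n-1$, $\s$ is an involution whose eigenlines for $+1$ and $-1$ are respectively the transgression of the power sum $\tr(X^{2n})$ and the transgression of the Pfaffian, since conjugation by an element of $O(4n)\setminus SO(4n)$ fixes $\tr(X^{2n})$ and reverses the sign of $\mathrm{Pf}$. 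I choose $P_n$ and $P_{n+1}$ to be exchanged by $\s$, so that $P_n+P_{n+1}$ spans the $+1$-eigenline; then, by naturality of the construction of Definition \ref{deffu}, $\s$ fixes $f_i,u_i$ for $i\notin\{n,n+1\}$ and exchanges $f_n\leftrightarrow f_{n+1}$, $u_n\leftrightarrow u_{n+1}$. Consequently $e(f_i,u_j)$ is $\s$-fixed for $i,j\notin\{n,n+1\}$, hence a multiple of $P_n+P_{n+1}$, which is the first line of \eqref{forrmaeD}; and $e(f_n,u_{n+1})=e(f_{n+1},u_n)$ is $\s$-fixed and lies in $\langle P_{2n}\rangle$, consistent with the stated form, while $\s$ exchanges $e(f_n,u_n)$ with $e(f_{n+1},u_{n+1})$.

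What remains are the pairings against $u_1$ and the diagonal terms. For these I would use the explicit description of $f_n,f_{n+1},u_n,u_{n+1}$ from Definition \ref{deffu} realised inside $\mathfrak{so}_{4n}$, i.e.\ expressed through $\tr(X^{2n})$ and $\mathrm{Pf}$, together with the classical identity writing $\mathrm{Pf}^{\,2}$ as a polynomial in the $\tr(X^{2j})$. This quadratic relation is what I expect to force $e(f_n,u_n)=e(f_{n+1},u_{n+1})=0$ while leaving $e(f_n,u_{n+1})$ a non-zero multiple of $P_{2n}$, and to produce the asymmetric normalisation $e(f_n,u_1)=c_{n,1}P_n$, $e(f_{n+1},u_1)=c_{n+1,1}P_{n+1}$ for the $P_n,P_{n+1}$ fixed above. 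Carrying out this computation, and checking that the splitting it dictates is compatible with the $\s$-normalisation of the previous paragraph, is the main obstacle; everything preceding it is formal.
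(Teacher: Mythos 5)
Your formal reduction is sound, and the symmetry idea is genuinely close in spirit to what the paper does: the paper's choice $\psi_{n}=q_{2n}+\sqrt{-1}\mathcal P$, $\psi_{n+1}=q_{2n}-\sqrt{-1}\mathcal P$ (with $\mathcal P=\sqrt{2n-1}\,\prod_i x_i$) is exactly a pair swapped by the outer involution, and your observation that $\s$ forces $e(f_i,u_j)\in\langle P_n+P_{n+1}\rangle$ for $i,j\notin\{n,n+1\}$ and exchanges the two diagonal terms is correct. But as a proof the proposal has a genuine gap, which you yourself flag as ``the main obstacle'': every assertion that actually distinguishes Proposition \ref{mainD} from Proposition \ref{main1} --- that $e(f_n,u_n)=e(f_{n+1},u_{n+1})=0$, that $e(f_n,u_{n+1})$ is a \emph{non-zero} multiple of $P_{2n}$, that $e(f_n,u_1)$ is a pure multiple of $P_n$ (no $P_{n+1}$ component), and that the constants $c_{i,j}$ in the first line are non-zero (these pairs have $m_i+m_j<h$, so Proposition \ref{lap} does not cover them) --- is left as an expectation rather than established. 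The $\s$-equivariance alone cannot decide any of these: an arbitrary $\s$-swapped pair is of the form $q_{2n}\pm\lambda\mathcal P'$ up to scale, and the diagonal vanishing modulo $(R^+)^2$ holds only for one specific relative normalization of the Pfaffian against $q_{2n}$, so the clause ``we can choose $P_n$ and $P_{n+1}$'' is precisely the computational content you have not supplied.

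Concretely, what is missing is the paper's block \eqref{pr00}--\eqref{secc} in \S\ref{ddd}: $(d\mathcal P,dq_2)=2n\mathcal P$, $(d\mathcal P,dq_{2k})\in(R^+)^2$ for $k>1$, and $(d\mathcal P,d\mathcal P)=(2n-1)\,s_{2n-1}(x_1^2,\ldots,x_{2n}^2)\cong(4n-2)\,q_{4n-2}$ modulo squares (via Newton's identities), which, combined with $(dq_{2n},dq_{2n})=(4n-2)q_{4n-2}$, yields the cancellation $(d\psi_n,d\psi_n)\cong 0$, the non-cancellation $(d\psi_n,d\psi_{n+1})\cong 2(4n-2)q_{4n-2}$, and $(d\psi_n,dq_2)=2n\psi_n$; these are then transported to $e(f_i,u_j)$ by the transgression formula \eqref{latrai}, giving \eqref{efff} and \eqref{ancoraD}. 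Note also that the classical identity $\mathrm{Pf}^2=\det$, which you propose to invoke, does not by itself give $(d\mathcal P,d\mathcal P)$ modulo $(R^+)^2$ (differentiating it only controls $\mathcal P\,d\mathcal P$); the direct computation of $(d\mathcal P,d\mathcal P)$ as an elementary symmetric function of the $x_i^2$ is what is needed. Until these pairings are computed, the proposal proves only the shape constraints imposed by $\s$, not the statement of the proposition.
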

 Finally, the full module structure of $A$ under the algebra of invariants is completed by the formulas for the multiplication of our basis by the last primitive element $P_r$. \begin{theorem}\label{main2} Set $c_i:=c_{i,r-i+1}$. The multiplication by $P_r$ is self adjoint for the form $e$. It is  given by the formulas
\begin{align}\label{p1}
e(f_i,u_{r-i+1})\wedge f_i&= -\sum_{j=1,\ j\neq i}^{r} c_ic_j^{-1}e(f_i,u_{r-j+1})f_j,\qquad  i=1,\ldots,r,\\\label{p2}
e(f_i,u_{r-i+1})\wedge u_i&= -\sum_{j=1,\ j\neq i}^{r} c_ic_j^{-1} e(f_i,u_{r-j+1})u_j ,\qquad i=1,\ldots,r.
\end{align} 
\end{theorem}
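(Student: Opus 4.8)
The plan is to compute, for each $i$, the expansion of $P_r\wedge f_i$ and $P_r\wedge u_i$ in the basis $\{f_j,u_j\}$ of $A$ over $\bigwedge(P_1,\dots,P_{r-1})$ (available by Theorem \ref{main}) by pairing against the form $e$ and reading off coordinates. The key inputs are: the self-duality of exponents $m_i+m_{r-i+1}=m_r+1$, which gives $m_i+m_{r-i+1}-1=m_r$ and hence, via Proposition \ref{main1} (or Proposition \ref{mainD} in type $D_{2n}$), $e(f_i,u_{r-i+1})=c_iP_r$ with $c_i:=c_{i,r-i+1}\neq0$; and the fact that $e(f_i,u_j)$ is a scalar multiple of $P_r$ exactly for these antidiagonal pairs, while for every other pair $e(f_i,u_j)$ is a scalar multiple of some $P_l$ with $l<r$ (or of $P_n+P_{n+1}$ in type $D_{2n}$), hence lies in $\bigwedge(P_1,\dots,P_{r-1})$. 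Throughout I will use the decomposition $\Gamma=\bigwedge(P_1,\dots,P_{r-1})\oplus P_r\wedge\bigwedge(P_1,\dots,P_{r-1})$ and the fact that $\beta\mapsto\beta\wedge P_r$ is injective on $\bigwedge(P_1,\dots,P_{r-1})$. Since $e$ is $\Gamma$-linear up to the Koszul signs recorded in \eqref{formae} and $P_r$ is odd, multiplication by $P_r$ is self-adjoint for $e$, which disposes of the first assertion.

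The first step is to show that $P_r\wedge f_i$ lies in the $\bigwedge(P_1,\dots,P_{r-1})$-span of the $f_j$ alone, and symmetrically $P_r\wedge u_i$ in the span of the $u_j$. Writing $P_r\wedge f_i=\sum_j(\alpha_{ij}f_j+\beta_{ij}u_j)$ with coefficients in $\bigwedge(P_1,\dots,P_{r-1})$, and pairing with $f_k$, the isotropy of the $f$-span (Proposition \ref{main1}) gives $\sum_j\pm\beta_{ij}\,e(u_j,f_k)=e(P_r\wedge f_i,f_k)=\pm P_r\wedge e(f_i,f_k)=0$; projecting onto the summand $P_r\wedge\bigwedge(P_1,\dots,P_{r-1})$, only the term $j=r-k+1$ contributes (using graded symmetry of $e$ together with the description above), so $c_k\,\beta_{i,r-k+1}\wedge P_r=0$ and therefore $\beta_{i,r-k+1}=0$ for all $k$. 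The same computation with $f$ and $u$ interchanged treats $P_r\wedge u_i$.

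The second step determines the $\alpha_{ij}$. Pairing $P_r\wedge f_i=\sum_j\alpha_{ij}f_j$ with $u_k$ for $k=r-j_0+1$, the left-hand side is $\sum_j\pm\alpha_{ij}\,e(f_j,u_{r-j_0+1})$, whose component in $P_r\wedge\bigwedge(P_1,\dots,P_{r-1})$ is $\pm c_{j_0}\,\alpha_{i,j_0}\wedge P_r$ (only $j=j_0$ survives), while the right-hand side $\pm P_r\wedge e(f_i,u_{r-j_0+1})$ vanishes when $j_0=i$ (since $P_r\wedge P_r=0$) and otherwise is a nonzero element of $P_r\wedge\bigwedge(P_1,\dots,P_{r-1})$. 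Cancelling $P_r$ yields $\alpha_{ii}=0$ and $\alpha_{i,j_0}=-c_{j_0}^{-1}e(f_i,u_{r-j_0+1})$ for $j_0\neq i$; multiplying through by $c_i$ and using $c_iP_r=e(f_i,u_{r-i+1})$ gives \eqref{p1}. Formula \eqref{p2} then follows either by repeating the computation with $u_i$ in place of $f_i$ and using the symmetry $e(f_a,u_b)=e(f_b,u_a)$, or by applying self-adjointness of multiplication by $P_r$ to \eqref{p1}.

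The main obstacle is the sign bookkeeping: the overall $-1$ in \eqref{p1}--\eqref{p2} is a delicate combination of the Koszul signs in the $\Gamma$-linearity and graded symmetry of $e$, the sign in $P_r\wedge P_l=-P_l\wedge P_r$, and the parities of the coefficients $\alpha_{ij}$ (odd-degree elements of $\bigwedge(P_1,\dots,P_{r-1})$), and all of these must be tracked precisely. A secondary point is type $D_{2n}$: there the antidiagonal of the matrix $\bigl(e(f_i,u_j)\bigr)$ carries a $2\times2$ block at the indices $n,n+1$, with $e(f_n,u_n)=e(f_{n+1},u_{n+1})=0$ and $e(f_n,u_{n+1})$, $e(f_{n+1},u_n)$ nonzero multiples of $P_{2n}$; but this block is itself antidiagonal and invertible, so the projection arguments still isolate a single coefficient and run as before, with Proposition \ref{mainD} in place of Proposition \ref{main1}.
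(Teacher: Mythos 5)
Your proof is correct and is essentially the paper's own argument: expand in the free basis over $\bigwedge(P_1,\ldots,P_{r-1})$ provided by Theorem \ref{main}, pair against the basis elements, and compare components in $P_r\wedge\bigwedge(P_1,\ldots,P_{r-1})$ using \eqref{t}, \eqref{t1} and the isotropy of the $f$- and $u$-spans; the sign you flag as delicate is in fact just the single commutation $P_r\wedge e(f_i,u_{r-j+1})=-e(f_i,u_{r-j+1})\wedge P_r$ of two odd elements, exactly as in the paper. The only minor divergence is that the paper kills the cross terms and links \eqref{p2} to \eqref{p1} in one stroke by applying $\delta\otimes 1$ to the expansion of $P_r\wedge u_i$, whereas you run two parallel pairing computations and invoke the symmetry $e(f_i,u_j)=e(f_j,u_i)$ — both routes work.
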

The constants $c_{i,j}$ depend on the choice of the primitive invariants.  We will exhibit,  in the course of the paper,  suitable choices of the $P_i$ as to normalize as much as possible these constants, which will be then   explicitly computed.\par
Clearly our Theorem \ref{main}  implies    formula \eqref{ps}, affording a proof very different from Bazlov's one.\par
Theorem \ref{main} has a Clifford counterpart due to Kostant \cite{K}: $A$ is free over the whole algebra of invariants $\Gamma$, generated under Clifford multiplication by $f_1,\ldots,f_r$. Kostant's theorem  is based on two main ingredients: the simplicity of the Clifford algebra and the the machinery of Chevalley's transgression. In our situation  we cannot use the former fact. 

However Chevalley transgression   allows us to reduce the computation of the bilinear form $e$ to that of another 
$S(\g)^\g$-valued bilinear form  on $S(\g)^\g$, and in turn, via Chevalley's restriction Theorem, to a $\mathbb C[\h]^W$-valued bilinear form on $\mathbb C[\h]^W$. 

This latter form can be introduced  in the following general framework. Let $G$ be a compact Lie group and $V$ an orthogonal representation. Given two invariants $a,b$  of degrees $h,k$  we obtain  a new invariant $a\circ b$ of degree $h+k-2$ by the formula $a\circ b:=(da,db)$  where $da, db$ are the differentials and $(da,db)$ is computed via the given  scalar product.
In the case of the adjoint representation, this pairing has been studied by Givental in \cite{Given} and Saito \cite{Saito} (we thank M. Rais and E. Vinberg for pointing out to us these references) and we are going to use Givental's results at least for the exceptional groups.
\smallskip

Let us denote by $R$  the ring of $G$--invariant polynomial functions on $V$,  and say that a homogeneous invariant $a$ is a {\em  generator} if it does not belong to $(R^+)^2$, i.e. it is not a product of invariants of positive degree.  Notice furthermore that, by the Leibnitz rule,  the pairing $a\circ b$  induces a similar composition in the vector space  $M:=R^+/(R^+)^2$. We will prove the following theorem.
\begin{theorem}\label{gendi}
Let $V$ be the reflection representation of an irreducible Weyl group. If $a,b$ are homogeneous generators   of the invariants of degrees $i,j$ and in degree $i+j-2$ there is a generator, then $a\circ b$ is a generator, except the special case of type $D_{2n}$ in degree $2n$. In this case the two generators $a,b$ of degree $2n$ can be chosen in such a way that $a\circ a=\ b\circ b=0$  modulo squares  while $a\circ b=0$ is a generator.
\end{theorem}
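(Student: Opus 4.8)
The plan is to go through the classification of irreducible Weyl groups, treating the classical types by an explicit computation with power sums and invoking Givental's results \cite{Given} in the exceptional ones. Recall that $R=\mathbb C[V]^W$ is a graded polynomial algebra on homogeneous generators of degrees $d_1\le\cdots\le d_r$, with $d_i=m_i+1$; the graded space $M=R^+/(R^+)^2$ has $M_d$ spanned by the classes of the degree-$d$ generators, so $M_d$ is at most one-dimensional since the exponents are pairwise distinct, the only exception being $\dim M_{2n}=2$ in type $D_{2n}$. By the Leibniz rule the symmetric operation $a\circ b=(da,db)$, which lowers degree by $2$, descends to $M$, and a homogeneous invariant being a generator means exactly that its class in $M$ is non-zero; in particular the assertion that $a\circ b$ is a generator concerns only the image of $a\circ b$ in $M$. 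One also notes that passing from $V$ to the ambient coordinate space in type $A$ changes $a\circ b$ only by an element of $(R^+)^2$, so that reduction is harmless.

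For the classical types I would use power sums. In type $A_r$, with $p_k=\sum_i x_i^k$ for $2\le k\le r+1$, one gets $p_k\circ p_l\equiv kl\,p_{k+l-2}$ modulo $(R^+)^2$ (the ambient identity $(dp_k,dp_l)=kl\,p_{k+l-2}$ picks up only a decomposable correction on restricting to the reflection representation); in types $B_r,C_r$, with $p_{2k}=\sum_i x_i^{2k}$ for $1\le k\le r$, one has $p_{2k}\circ p_{2l}=4kl\,p_{2k+2l-2}$. In each case, whenever $k+l-2$ (resp.\ $2k+2l-2$) is a degree the right-hand side is a non-zero scalar times a generator, which is exactly the claim, and nothing more is needed since all exponents are distinct here. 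In type $D_\ell$ I would take $p_2,p_4,\dots,p_{2\ell-2}$ together with $q=x_1\cdots x_\ell$; then $p_{2k}\circ p_{2l}=4kl\,p_{2k+2l-2}$, $p_{2k}\circ q=2k\,q\,p_{2k-2}$, and, by Newton's identities applied to $e_{\ell-1}(x_1^2,\dots,x_\ell^2)$, $q\circ q\equiv\frac{(-1)^\ell}{\ell-1}\,p_{2\ell-2}$ modulo $(R^+)^2$. If $\ell$ is odd, the only decomposable pairings, $p_{2k}\circ q$ with $k\ge2$, land in odd degrees strictly greater than $\ell$ — which are not degrees — so the hypothesis of the theorem is vacuous there, and the remaining pairings are generators.

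The genuinely special case is type $D_\ell$ with $\ell$ even, which is the exceptional family $D_{2n}$ of the statement (with $\ell=2n$); the relevant degree is $\ell$ itself, the exotic degree, which is doubled precisely because $\ell$ is even, so $M_\ell$ is two-dimensional, spanned by $[p_\ell]$ and $[q]$. The formulas above give, modulo $(R^+)^2$, $p_\ell\circ p_\ell=\ell^2 p_{2\ell-2}$, $p_\ell\circ q=\ell\,q\,p_{\ell-2}\equiv0$ and $q\circ q\equiv\frac1{\ell-1}p_{2\ell-2}$, so the quadratic form $x\mapsto x\circ x$ on $M_\ell$ is, in the basis $\{[p_\ell],[q]\}$, diagonal with entries $\ell^2$ and $\frac1{\ell-1}$ (valued in the line $M_{2\ell-2}$). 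Choosing $c$ with $c^2=-\ell^2(\ell-1)$ and setting $a=p_\ell+cq$, $b=p_\ell-cq$ yields $a\circ a\equiv b\circ b\equiv0$ and $a\circ b\equiv2\ell^2 p_{2\ell-2}\ne0$, which is the asserted exception; furthermore $a\circ p_{2k}\equiv2\ell k\,p_{\ell+2k-2}$ is a generator whenever $\ell+2k-2$ is a degree, $a\circ p_2$ is a non-zero multiple of $a$ (and similarly for $b$), and $p_{2k}\circ p_{2l}=4kl\,p_{2k+2l-2}$ has non-zero class even when $2k+2l-2=\ell$ (it then lies along $[p_\ell]=\tfrac12([a]+[b])$). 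Hence the theorem holds in this type with the generating system $\{p_2,\dots,p_{2\ell-2},a,b\}$.

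For $G_2,F_4,E_6,E_7,E_8$ the exponents are pairwise distinct, hence each $M_d$ is at most one-dimensional, and it is enough to read off from Givental's explicit computation of the co-metric of the reflection representation \cite{Given} (see also Saito \cite{Saito}) that, whenever $d_i+d_j-2$ is a degree, the pairing of the basic invariants of degrees $d_i$ and $d_j$ has non-zero class in $M$. For $G_2$ and $F_4$ the pairings to be verified are very few — besides those involving the quadratic invariant, which merely rescale by the Euler identity, essentially only the pairing of the degree-$6$ and degree-$8$ generators into degree $12$ in $F_4$ — and can also be checked directly with explicit invariants. I expect the main obstacle to be the even-$D_\ell$ case: one must control the classes of $p_\ell\circ p_\ell$, $p_\ell\circ q$ and $q\circ q$ in $M$ precisely enough to diagonalise the quadratic form on the two-dimensional space $M_\ell$ and to confirm that its off-diagonal part does not also degenerate, the key computation being the Newton-identity evaluation of $q\circ q=e_{\ell-1}(x_1^2,\dots,x_\ell^2)$; a secondary point is to reconcile Givental's normalisations with ours and to make sure that no exceptional-type pairing landing in an existing degree is overlooked.
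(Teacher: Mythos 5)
Your classical-type analysis is correct and is essentially the paper's own argument (\S\ref{ddd}): Newton power sums in types $A,B,C$, the Pfaffian $q=x_1\cdots x_\ell$ in type $D$, the Newton-identity evaluation $q\circ q\equiv\frac{(-1)^\ell}{\ell-1}p_{2\ell-2}$, and in $D_{2n}$ your isotropic vectors $a=p_{2n}+cq$, $b=p_{2n}-cq$ with $c^2=-(2n)^2(2n-1)$ are, up to scale, exactly the paper's choice $\psi_n=q_{2n}+\sqrt{-1}\,\mathcal P$, $\psi_{n+1}=q_{2n}-\sqrt{-1}\,\mathcal P$ with $\mathcal P=\sqrt{2n-1}\prod_i x_i$. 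Also the reduction to classes in $M=R^+/(R^+)^2$ and the harmlessness of working in ambient coordinates in type $A$ are fine.

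The gap is in the exceptional types, which are the only genuinely non-classical content of Theorem \ref{gendi}. You propose to ``read off'' from Givental's computation of the co-metric that every pairing of basic generators landing in a generator degree has non-zero class; but that is precisely the statement to be proved, and it is not available as a lookup: pairings such as the degree-$5$ with degree-$6$ generator of $E_6$ (landing in degree $9$), or degree $8$ with degree $12$ in $E_8$ (landing in degree $18$), are not forced by any degree or Coxeter-element argument, and explicit invariants for $E_8$ are not in the literature in usable form (the paper's Appendix had to compute them by machine). What \cite{Given} actually supplies, and what the paper uses, is structural: Corollary 2 of \cite{Given} gives a degree-shifting isomorphism $\Psi$ of $M$ onto the local algebra $Q=\mathbb C[x,y]/(p_x,p_y)$ of the corresponding simple surface singularity such that $\Psi(\tilde a\circ\tilde b)=S(\Psi(\tilde a)\Psi(\tilde b))$, with $S$ an invertible rescaling of homogeneous components. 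Since $Q$ is a monomial complete intersection whose graded pieces are one-dimensional (Poincar\'e polynomial $\sum_i t^{m_i-1}$), non-vanishing of all the relevant products is then an immediate inspection of monomials; without this intermediary (or an explicit computation as in the Appendix) your step for $E_6,E_7,E_8$ is an unsubstantiated citation rather than a proof. Incidentally, for $G_2$ and $F_4$ the paper dispatches everything via Proposition \ref{lap}, since every non-Euler pairing there involves complementary degrees with $i+j\geq h$, where the leading-term argument in the Coxeter eigenbasis gives the non-vanishing uniformly.
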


The above theorem will be proved in Proposition \ref{lap}  if $i+j\geq h$, $h$ being the Coxeter number of $\g$. In fact this will suffice to deduce  Theorem \ref{main}.

In the remaining cases the result will follow from a case by case analysis and will represent a key step in the proof of  Theorem \ref{main2}.  

Finally,   Theorem \ref{gendi} has another interesting application. If we allow the operation $a\circ b$ in the construction of invariants, we shall see that in each case, besides  the quadratic invariant it is enough to add either one   (cases $A_n,C_n,G_2$) or two more generators.  In this way we shall compute, using a computer,  the invariants in the case of $E_8$.
\begin{remark}\label{err}
\label{Led}  When all degrees of generators are distinct, fixing generators $\psi_1,\ldots,\psi_r$ we  have constants $d_{i,j}$ such that   $\psi_i\circ \psi_j=d_{i,j}\psi_k\mod (R^+)^2 $ when $\psi_k$ has the same degree as $\psi_i\circ\psi_j$. Using transgression, we associate to the $\psi_i$ primitive invariants 
$P_i$,  which in turn define the constants $c_{ij}$ via formula \eqref{forrmae}.
We will show in \eqref{latrai} that indeed, for any $i,j$, we have
$$c_{i,j}=\frac{d_{i,j}}{m_i+m_j}.$$
These constants will be computed explicitly for the classical groups in \S\ref{ddd} and for the exceptional groups in Tables 2,3,4.
\end{remark}
\par
Some ideas and techniques used in this paper have been developed in \cite{DCPPM} and \cite{Dolce}: we refer the reader 
to  Section \ref{FR} for a brief outline of these results.

 \section{The main construction}
 \subsection{Setup\label{genefo}} Let $V$ be an $n$-dimensional vector space.
Recall that on the exterior algebra  $\bigwedge V$  we have an action of  elements $x\in V^*$ as derivations, denoted  by $i(x)$, which extends  the duality action on $V$:
$$i(x)(v_1\wedge\ldots\wedge v_k)=\sum_{i=1}^k (-1)^{i+1}x(v_i)\,v_1\wedge\ldots \wedge \widehat{v_i}\wedge\ldots\wedge v_k,$$
$v_1,\ldots,v_k\in V$.  This formula extends to a  contraction action of $\bigwedge V^*$ on $\bigwedge V$.  
Remark that, given $u\in \bigwedge V$,  the map $x\mapsto i(x)u$  can be thought of as the element $\sum\limits_{h=1}^n i(x^h)u\otimes x_h\in\bigwedge V\otimes V ,$ where $\{x_h\},\{ x^j\}$ are dual bases of $V,V^*$. 

Given an invariant $p\in (\bigwedge \mathfrak g^*)^{\mathfrak g}$, the map $x\mapsto i(x)p,\ x\in\mathfrak g$ is $\mathfrak g$ equivariant and therefore defines an element of $A$,  which is  represented by the tensor $\sum\limits_{h=1}^n i(x_h)p\otimes x^h$ (using dual bases  $\{x_h\},\{x^h\}$ for $\g,\g^*$).

Let    $\theta(a),\theta^*(a) $  denote the adjoint and coadjoint action of $a\in\g$ on $\g,\g^*$, respectively. These actions extend to the exterior algebras as derivations of degree 0. \par

 In $\bigwedge \mathfrak g^* $  we have the Koszul differential $\cac$  which makes it a differential graded algebra. In degree 1, the differential  $\cac:\mathfrak g^*\to\bigwedge^2\mathfrak g^*$  is dual    to the bracket map.  Explicitly
\begin{equation*}
\cac(u)= \frac1{ 2}\sum_{i=1}^n x^i\wedge \theta^*(x_i)u.
\end{equation*} 
\begin{remark}
Recall that on $ \bigwedge\g^*\otimes \g^*$ is defined the standard Koszul differential, which is the  sum of $-\cac  \otimes 1$ and of 
$ \psi \otimes a\mapsto \sum\limits_{i=1}^n   x^i\wedge\theta(x_i)(\psi \otimes a)$, where $\{x_i\}, \{x^j\}$ are as above dual bases of $\g,\ \g^*$ respectively.
If $\psi \otimes a$ is invariant   this second term vanishes and  the total differential is just $-\cac  \otimes 1$.
\end{remark}
\vskip10pt
Using the non degenerate Killing form one defines also the differential  (in homology):
$$\partial:=-\cac^t.$$  The  Killing form    identifies $\bigwedge \g^*$ with $\bigwedge \g$,  so we can apply, by duality, the two differentials $\cac$ and $\partial$ to $\bigwedge \g$. Under these identifications  the contraction $i$ is just the adjoint of the wedge multiplication  $\varepsilon(x):\bigwedge\g\to\bigwedge\g, \varepsilon(x)(y)=x\wedge y,\,x,y \in\bigwedge\g$. We have (see \cite[(89),(90)]{K})
\begin{equation}\label{forba}\delta=\frac{1}{2}\sum_{h=1}^n\varepsilon(x_h)\theta(x^h),\quad \partial=\frac{1}{2}\sum_{h=1}^n\theta(x^h)i(x_h).\end{equation}
For the space  $\Gamma= (\bigwedge \g^*)^\g=(\bigwedge \g)^\g$  of invariant forms   Kostant \cite[Proposition 22]{K}   proves:
\begin{equation}\label{Gamma}\Gamma=\ker\cac\cap\ker\partial=\ker L,\end{equation} where
\begin{equation}\label{lapl}L= \cac \partial+\partial \cac =\frac12\sum_{i=1}^n \theta(z_i)^2\end{equation}
is the Laplacian and $\{z_i\}$ is an orthonormal basis of $\g$ with respect to the Killing form.
Recall  the useful  formulas
 \begin{equation}\label{forbas}
i(x)\,\cac +\cac \,i(x)=\theta(x),\quad x\in\g.\end{equation} 
\begin{equation}\label{aus}i(\cac x) = - (\partial i(x)+i(x)\partial),\quad x\in\g.\end{equation}
Formula \eqref{forbas} follows from  \cite[(92)]{K}. To prove \eqref{aus}, 
take $u,v\in\bigwedge \g$; then $(i(\delta(x))(u),v)=(u,\delta(x)\wedge v)=(u,\delta(x\wedge v))+(u,x\wedge \delta(v))=-(\partial(u),x\wedge v))+(i(x)(u),\delta(v))=
-(i(x)(\partial(u)),v)-(\partial(i(x)(u)),v)$.

\begin{lemma}\label{conto} If $p\in \Gamma$, then 
\begin{enumerate}
\item $\cac  p=\partial p=0$.
\item $\cac i(x)p=0$.
\item $\partial i(x)p=-i(\cac x)p$.
\item $\cac i(\cac x)p=-\frac12 i(x)p.$
\end{enumerate}
\end{lemma}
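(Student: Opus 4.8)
The plan is to derive the four identities in succession from the formulas already recorded. Item (1) is nothing but the inclusion $\Gamma\subseteq\ker\cac\cap\ker\partial$ of \eqref{Gamma}. For item (2) I would evaluate the Cartan-type identity \eqref{forbas} on $p$: this gives $\cac i(x)p=\theta(x)p-i(x)\cac p$, and the right-hand side vanishes because $p$ is $\g$-invariant and, by (1), $\cac p=0$. Item (3) is obtained in the same spirit from \eqref{aus}: applying $i(\cac x)=-(\partial i(x)+i(x)\partial)$ to $p$ and discarding $i(x)\partial p=0$ yields $i(\cac x)p=-\partial i(x)p$, which is (3).

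The substance is in (4). Here I would first use (3) to replace $i(\cac x)p$ by $-\partial i(x)p$, so that $\cac i(\cac x)p=-\cac\partial\,i(x)p$. Writing $\cac\partial=L-\partial\cac$ from \eqref{lapl} and noting that $\partial\cac\,i(x)p=0$ by (2), this reduces the claim to the identity $L\big(i(x)p\big)=\tfrac12\,i(x)p$. To prove the latter, observe that invariance of the Killing form gives the commutation rule $[\theta(a),i(y)]=i([a,y])$ on $\bigwedge\g$; combined with $\theta(z_i)p=0$ it yields $\theta(z_i)^2\big(i(x)p\big)=i\big([z_i,[z_i,x]]\big)p$, whence $L\big(i(x)p\big)=\tfrac12\,i\big(\textstyle\sum_i[z_i,[z_i,x]]\big)p$ by the formula for $L$ in \eqref{lapl}. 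Since $\{z_i\}$ is orthonormal for the Killing form, $\sum_i[z_i,[z_i,x]]$ is the image of $x$ under the Casimir operator acting in the adjoint representation, which for the Killing normalization is the identity; hence $\sum_i[z_i,[z_i,x]]=x$ and $L(i(x)p)=\tfrac12 i(x)p$, proving (4).

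The only point that is not a formal consequence of \eqref{Gamma}, \eqref{forbas}, \eqref{aus} and \eqref{lapl} is the computation of the Laplacian on $i(x)p$, and there the essential input is the classical fact that the Casimir element normalized via the Killing form acts as the identity on the adjoint module --- equivalently, that $\{i(x)p\ :\ x\in\g\}$ is a homomorphic image of the adjoint representation, on which $L$ is the scalar $\tfrac12$. I expect this to be the only real obstacle; one should just be mildly careful that $i(x)p$ is itself \emph{not} $\g$-invariant, so that $L$ does not annihilate it.
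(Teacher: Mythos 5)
Your proof is correct and follows essentially the same route as the paper: (1)--(3) are read off from \eqref{Gamma}, \eqref{forbas}, \eqref{aus}, and (4) is obtained by completing $\cac\partial$ to the Laplacian using (2) and then evaluating $L$ on $i(x)p$. The only difference is that you spell out the final step $L(i(x)p)=\tfrac12 i(x)p$ via the Casimir acting as the identity on the adjoint module, which the paper leaves implicit in its citation of \eqref{lapl}; that added justification is accurate.
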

\begin{proof} Relation (1) follows at once from \eqref{Gamma}.   To prove (2), compute using (1) and \eqref{forbas}: 
$$\cac i(x)p=(\cac i(x)+i(x)\cac )(p)=\theta(x)(p)=0. $$
Part (3) follows directly from (1) and \eqref{aus}. Finally, to prove (4), use (1),  \eqref{aus} and \eqref{lapl}:
$$
\cac i(\cac x)p=-\cac (\partial i(x)+i(x)\partial)(p)=-\cac \partial i(x)(p)=-(\cac \partial+\partial \cac )i(x)(p)=-\frac12 i(x)(p).
$$
\end{proof}
\subsection{The elements $f_i,u_i $\label{201}}We  now introduce  a   basic definition.  Let  $m:  \bigwedge \mathfrak g \otimes \mathfrak g\to  \bigwedge \mathfrak g$ be the multiplication map 
\begin{equation}\label{multiplication}m(\alpha\otimes b):=\alpha\wedge b.\end{equation} Choose primitive generators $P_1,\ldots,P_r$ for $\Gamma$ and dual bases $\{x_h\}, \{x^h\}\in\g$ with respect to  the Killing form.  
\begin{definition}\label{deffu} Set, for $i=1,\ldots,r$
\begin{equation}
\label{gliuf} f_i:=\frac{1}{\deg(P_i)}\sum_{h=1}^n i(x_h)P_i\otimes x^h,\quad u_i:=2(\partial\otimes 1 )f_i=\frac{2}{\deg(P_i)}\sum_{h=1}^n\partial i(x_h)P_i\otimes x^h.
\end{equation}
\end{definition}

\begin{lemma}
We have $m(f_i)=P_i,\quad (\cac\otimes 1) u_i= f_i,\ (\cac\otimes 1) f_i=0$.
\end{lemma}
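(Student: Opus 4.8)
The plan is to derive all three identities from Lemma \ref{conto} together with the Euler (degree) identity in $\bigwedge\g$, treating them in the order stated.

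First I would prove $m(f_i)=P_i$. Expanding the definition, $m(f_i)=\frac{1}{\deg(P_i)}\sum_{h}i(x_h)P_i\wedge x^h$. Since $\deg(P_i)=2m_i+1$ is odd, the element $i(x_h)P_i$ has even degree, so $i(x_h)P_i\wedge x^h=x^h\wedge i(x_h)P_i=\varepsilon(x^h)i(x_h)P_i$ with no sign. For dual bases $\{x_h\},\{x^h\}$ of $\g$ one has $\sum_h(x_h,y)\,x^h=y$ for every $y\in\g$, and evaluating on a decomposable vector $y_1\wedge\cdots\wedge y_k$ shows that $\sum_h\varepsilon(x^h)i(x_h)$ is precisely the number operator on $\bigwedge\g$, acting by $k$ on $\bigwedge^k\g$. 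Applying this to $P_i$ gives $m(f_i)=\frac{1}{\deg(P_i)}\deg(P_i)\,P_i=P_i$.

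Next, $(\cac\otimes 1)f_i=0$ is immediate: $(\cac\otimes 1)f_i=\frac{1}{\deg(P_i)}\sum_h\big(\cac\, i(x_h)P_i\big)\otimes x^h$, and each summand vanishes by Lemma \ref{conto}(2), since $P_i\in\Gamma$. Finally, for $(\cac\otimes 1)u_i=f_i$ I would use $u_i=\frac{2}{\deg(P_i)}\sum_h\big(\partial\, i(x_h)P_i\big)\otimes x^h$, so that $(\cac\otimes 1)u_i=\frac{2}{\deg(P_i)}\sum_h\big(\cac\partial\, i(x_h)P_i\big)\otimes x^h$; for $p\in\Gamma$ and $x\in\g$, parts (3) and (4) of Lemma \ref{conto} give $\cac\partial\, i(x)p=-\cac\, i(\cac x)p=\tfrac12\, i(x)p$, and substituting this identity term by term yields $(\cac\otimes 1)u_i=\frac{1}{\deg(P_i)}\sum_h i(x_h)P_i\otimes x^h=f_i$.

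None of this is difficult; the only points requiring care are the sign in the computation of $m(f_i)$ — which vanishes precisely because primitive invariants lie in odd degree — and correctly identifying $\sum_h\varepsilon(x^h)i(x_h)$ with the number operator (rather than a multiple of it) when the $x_h$ and $x^h$ form dual, not orthonormal, bases.
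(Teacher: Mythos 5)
Your proof is correct and follows essentially the same route as the paper: the same odd-degree sign observation and Euler (number) operator identity for $m(f_i)=P_i$, and the same use of Lemma \ref{conto}(3)--(4) to get $\cac\partial\, i(x)P_i=\tfrac12 i(x)P_i$ and hence $(\cac\otimes 1)u_i=f_i$. The only (inessential) difference is that you prove $(\cac\otimes 1)f_i=0$ directly from Lemma \ref{conto}(2), whereas the paper deduces it from the identity $(\cac\otimes 1)u_i=f_i$ together with $\cac^2=0$; both are valid.
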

\begin{proof} The first statement is a consequence of the chosen normalization:
$$m(f_i)=\frac{1}{\deg(P_i)}\sum_{h=1}^n i(x_h)P_i\wedge x^h=\frac{1}{\deg(P_i)}\sum_{h=1}^n \varepsilon(x^h)i(x_h)P_i=P_i.$$
(Recall that $P_i$ is homogeneous 
of odd degree. This allows us to swap the exterior multiplication by $x^h$ over to 
the left: the second to last equality follows).\par
To prove the second statement, compute using Lemma \ref{conto}:  \begin{align*}(\cac\otimes1)u_i&=\frac{2}{\deg(P_i)}\sum_{h=1}^n\cac\partial i(x_h)P_i\otimes x^h=-\frac{2}{\deg(P_i)}\sum_{h=1}^n\cac  i(\cac x_h)P_i\otimes x^h \\&= \frac{1}{\deg(P_i)}\sum_{h=1}^n   i( x_h)P_i\otimes x^h=f_i.\end{align*}
The last identity $(\cac\otimes 1) f_i=0$ follows from $\cac^2=0$.\end{proof}
Since we have that  $\dim A= 2^rr$ (see \cite[Corollary 50]{K}), Theorem \ref{main}  will follow from  
\begin{proposition}\label{fun}
The elements $f_i,u_i $ are linearly independent over $\bigwedge(P_1,\ldots,P_{r-1}).$
\end{proposition}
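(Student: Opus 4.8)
The plan is to deduce linear independence from the $\Gamma$-valued bilinear form $e$ of Propositions \ref{main1} and \ref{mainD}, exploiting only the two isotropy statements and the shape of $e(f_i,u_j)$ they provide. I will use the (routine) fact that $e$ is $\Gamma$-bilinear in the graded sense, i.e. $e(\omega a,b)=\pm\,\omega\wedge e(a,b)$ for $\omega\in\Gamma$, which is immediate from the definition \eqref{formae} of $e$ out of the Killing form.

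Suppose $\sum_{i=1}^r\omega_i f_i+\sum_{j=1}^r\eta_j u_j=0$ with $\omega_i,\eta_j\in\bigwedge(P_1,\ldots,P_{r-1})$. Pairing this relation with $u_k$ and using that the submodule spanned by the $u_j$ is $e$-isotropic kills the $\eta$-part and leaves $\sum_{i=1}^r(\pm\omega_i)\wedge e(f_i,u_k)=0$ for every $k$; symmetrically, pairing with $f_k$, using that the submodule spanned by the $f_i$ is isotropic and the symmetry $e(u_j,f_k)=e(f_j,u_k)$, leaves $\sum_{j=1}^r(\pm\eta_j)\wedge e(f_j,u_k)=0$ for every $k$. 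So $(\omega_i)$ and $(\eta_j)$ satisfy the same homogeneous system over $\Gamma$, governed by the matrix $E_{i,k}:=e(f_i,u_k)$, and it suffices to show this system has only the trivial solution in $\bigwedge(P_1,\ldots,P_{r-1})^{r}$.

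The point is the ``anti-triangular'' shape of $E$. Order the exponents weakly increasingly, recall the duality $m_i+m_{r+1-i}=h$ (the Coxeter number) and $m_r=h-1$. By Proposition \ref{main1} — or Proposition \ref{mainD} in type $D_{2n}$ — each $E_{i,k}$ is a scalar multiple of $P_\ell$ with $m_\ell=m_i+m_k-1$ (or of $P_n+P_{n+1}$, $P_n$, $P_{n+1}$ in the $D_{2n}$ exceptions, all of which lie in $\bigwedge(P_1,\dots,P_{r-1})$), and it vanishes when $m_i+m_k-1$ is not an exponent. Hence: $E_{i,k}=0$ whenever $i+k>r+1$, since then $m_i+m_k$ already reaches or exceeds $h$ (the single boundary position $(n+1,n+1)$ in type $D_{2n}$ being harmless, as $E_{n+1,n+1}=e(f_{n+1},u_{n+1})=0$ there); on the anti-diagonal $E_{i,r+1-i}=c_i P_r$ with $c_i=c_{i,r+1-i}\neq0$, including for the pair $\{n,n+1\}$ in type $D_{2n}$; and $E_{i,k}\in\bigwedge(P_1,\ldots,P_{r-1})$ whenever $i+k\le r+1$ — here one checks in type $D_{2n}$ that the only position above the anti-diagonal where $P_r$ could occur is $(n,n)$, where again $E_{n,n}=e(f_n,u_n)=0$.

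Granting this, fix $k$ and consider $\sum_{i\le r+1-k}(\pm\omega_i)\wedge E_{i,k}=0$: the term $i=r+1-k$ equals $\pm c_{r+1-k}\,\omega_{r+1-k}\wedge P_r\in P_r\,\bigwedge(P_1,\dots,P_{r-1})$, while every other term lies in $\bigwedge(P_1,\dots,P_{r-1})$. Since $\Gamma=\bigwedge(P_1,\dots,P_{r-1})\oplus P_r\,\bigwedge(P_1,\dots,P_{r-1})$ and wedging with $P_r$ is injective on $\bigwedge(P_1,\dots,P_{r-1})$, we conclude $\omega_{r+1-k}=0$; letting $k$ run over $1,\dots,r$ gives $\omega_i=0$ for all $i$, and the identical argument on $(\eta_j)$ gives $\eta_j=0$ for all $j$. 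I expect the only real work to be in verifying the anti-triangular shape — a bookkeeping exercise with exponent duality — the delicate point being the coincidence $m_n=m_{n+1}$ in type $D_{2n}$, which is precisely why Proposition \ref{mainD} must be invoked there; everything else is formal.
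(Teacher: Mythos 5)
Your proof is correct, but it is not the route the paper takes, so let me compare. The paper first reduces Proposition \ref{fun} to independence of the $f_i$ alone (Lemma \ref{Ma}) by applying $\cac\otimes 1$ to the relation and using $(\cac\otimes1)u_i=f_i$, $(\cac\otimes1)f_i=0$; it then proves Lemma \ref{Ma} by normalizing a homogeneous relation so that the coefficient of the surviving term of maximal exponent is $P_1\wedge\cdots\wedge P_{r-1}$ and pairing with the single complementary element $u_{r-j+1}$, so that only the anti-diagonal values \eqref{t} and, in type $D_{2n}$, \eqref{t1} are needed; these follow from Proposition \ref{lap} together with \eqref{latrai} and the explicit $D_{2n}$ computation, i.e.\ without the case-by-case analysis behind Theorem \ref{gendi} -- this is precisely what lets the paper prove Theorem \ref{main} independently of that analysis. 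You instead pair the full relation with every $u_k$ and every $f_k$, use both isotropy statements and the symmetry $e(u_j,f_k)=e(f_j,u_k)$ to decouple the two families of coefficients, and then solve the resulting anti-triangular system over $\Gamma$ via the decomposition $\Gamma=\bigwedge(P_1,\ldots,P_{r-1})\oplus P_r\wedge\bigwedge(P_1,\ldots,P_{r-1})$ and injectivity of wedging with $P_r$. This is a clean, symmetric one-shot argument (no differential, no degree-normalization trick), and your handling of the $D_{2n}$ boundary positions $(n,n)$ and $(n+1,n+1)$ is exactly the right point to worry about; the price is that, as written, you invoke the full Propositions \ref{main1} and \ref{mainD} (whose proof in the paper rests on Theorem \ref{gendi}, hence on case-by-case work including the exceptional types), whereas the paper's route needs only the complementary-index entries. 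There is no circularity, since Propositions \ref{main1}, \ref{mainD} are established independently of Proposition \ref{fun}; moreover the only features of the matrix you actually use away from the anti-diagonal (vanishing below it and absence of $P_r$ above it, apart from the two $D_{2n}$ positions) already follow by degree reasons from \eqref{latrai} together with Theorem \ref{proptransgr}, so your argument could be rephrased to use essentially the same inputs as the paper's.
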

In fact,  Proposition \ref{fun}  can be reduced to
\begin{lemma}\label{Ma}
The elements $f_i  $ are linearly independent over  $\bigwedge(P_1,\ldots,P_{r-1}).$
\end{lemma}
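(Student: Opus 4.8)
The plan is to prove the stronger statement that the $f_i$ are linearly independent over all of $\Gamma = \bigwedge(P_1,\dots,P_r)$, and not merely over $\bigwedge(P_1,\dots,P_{r-1})$; this is in fact what we need, since a dependence relation over the smaller exterior algebra is a fortiori one over the larger. So suppose $\sum_{i=1}^r \gamma_i f_i = 0$ in $A$ with $\gamma_i \in \Gamma$, not all zero. The key structural fact to exploit is the first identity of the Lemma preceding Proposition \ref{fun}, namely $m(f_i) = P_i$, where $m:\bigwedge\g\otimes\g\to\bigwedge\g$ is the multiplication map \eqref{multiplication}. Applying $m$ to the relation gives $\sum_i \gamma_i \wedge P_i = 0$ in $\Gamma = \bigwedge(P_1,\dots,P_r)$. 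Since the $P_i$ are the free exterior generators of $\Gamma$ and have odd degree, the elements $P_1,\dots,P_r$ are part of a basis of $\Gamma$ over $\CC$ after multiplying; more precisely, an equation $\sum_i \gamma_i \wedge P_i = 0$ in a free exterior algebra on the $P_i$ forces, by the standard Koszul/exterior syzygy argument, that each $\gamma_i$ lies in the ideal $\sum_{j} \Gamma\wedge P_j$ — but this does not immediately kill the $\gamma_i$, so applying $m$ alone is not enough.

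Instead I would iterate using the differential. Recall $(\cac\otimes 1)f_i = 0$, so the $f_i$ are $(\cac\otimes 1)$-cycles, and more importantly $u_i = 2(\partial\otimes 1)f_i$ satisfies $(\cac\otimes 1)u_i = f_i$. The strategy is to combine the multiplication map $m$ with contraction and $\partial$ operators to peel off the relation degree by degree. Concretely, consider the operator on $A$ obtained by first applying $1\otimes$ (the second-factor contraction against a generic element, or better the full map $x\mapsto$ pairing) followed by wedge multiplication; the composite $m$ already realizes "evaluate $f_i$ at $P_i$". For the higher syzygies, I would use that multiplication by each $P_j$ on $A$, and the form $e(-,-)$, detect the coefficients: pairing the relation $\sum_i\gamma_i f_i=0$ against $u_j$ under the $\Gamma$-valued form $e$ gives $\sum_i \gamma_i\, e(f_i,u_j) = 0$. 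By Propositions \ref{main1} and \ref{mainD} the matrix $(e(f_i,u_j))_{i,j}$ is, up to the $D_{2n}$ subtlety, "anti-triangular" with respect to the ordering of exponents: $e(f_i,u_j)$ is a nonzero multiple of a primitive $P_k$ precisely when $m_i+m_j-1 = m_k$, and in particular $e(f_i,u_{r-i+1})$ is always a nonzero multiple of $P_r$ (taking $m_i + m_{r-i+1} - 1$; one checks this equals $m_r = h-1$ using $m_i + m_{r-i+1} = h$ for the highest exponent, the standard exponent-symmetry $m_i + m_{r+1-i} = h$). So the antidiagonal of this matrix consists of nonzero multiples of $P_r$, and all entries strictly below the antidiagonal (in the exponent order) vanish for degree reasons. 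This anti-triangularity with nonzero antidiagonal is exactly what lets us solve the system.

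Putting this together: the relation $\sum_i \gamma_i e(f_i,u_j) = 0$ becomes, after using the anti-triangular shape, a triangular system of equations over $\Gamma$ for the $\gamma_i$. Starting from $j=1$ (so that only $e(f_r,u_1)$ — a nonzero multiple of $P_r$ — and possibly lower-exponent terms appear), and proceeding up, one shows inductively that each $\gamma_i$ must be divisible by $P_r$; writing $\gamma_i = P_r \wedge \gamma_i'$ and using that $P_r$ is a nonzerodivisor on the relevant pieces, one reduces to a relation in which the coefficients $\gamma_i'$ lie in $\bigwedge(P_1,\dots,P_{r-1})$, and iterating (now with the $e(f_i,u_j)$ involving $P_{r-1}$, etc., via the anti-triangular structure) forces all $\gamma_i = 0$. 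The main obstacle is bookkeeping the anti-triangular elimination correctly, in particular handling the degenerate $D_{2n}$ case where the antidiagonal entry in the middle block is a multiple of $P_{2n}$ while $e(f_n,u_n)=e(f_{n+1},u_{n+1})=0$ and the off-antidiagonal block involves $P_n + P_{n+1}$ — there one must use the explicit choice of $P_n, P_{n+1}$ from Proposition \ref{mainD} and argue that the $2\times 2$ block $\begin{pmatrix} 0 & c_{n,n+1}P_{2n}\\ c_{n+1,n}P_{2n} & 0\end{pmatrix}$ is still invertible after inverting $P_{2n}$, so the elimination goes through unchanged. A secondary technical point is justifying that "divisibility by $P_r$" followed by cancellation is legitimate, which follows because $\Gamma$ is a free exterior algebra and $P_r$ is a homogeneous generator, hence a nonzerodivisor on the subalgebra $\bigwedge(P_1,\dots,P_{r-1})$ on which it acts by wedging.
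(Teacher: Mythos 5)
Your opening move --- proving the ``stronger'' claim that the $f_i$ are linearly independent over all of $\Gamma=\bigwedge(P_1,\ldots,P_r)$ --- is not a strengthening but a false statement, and this sinks the proposal as written. Theorem \ref{main2}, formula \eqref{p1}, exhibits explicit nontrivial $\Gamma$-linear relations among the $f_i$: for instance $c_1P_r\wedge f_1+\sum_{j\neq 1}c_1c_j^{-1}e(f_1,u_{r-j+1})\wedge f_j=0$, where each $e(f_1,u_{r-j+1})=c_{1,r-j+1}P_{r-j+1}$ is nonzero. So no elimination scheme can force arbitrary coefficients $\gamma_i\in\Gamma$ to vanish. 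The precise step where your elimination breaks is ``each $\gamma_i$ must be divisible by $P_r$; write $\gamma_i=P_r\wedge\gamma_i'$ and cancel'': $P_r$ is an odd element, so $P_r\wedge P_r=0$ and $P_r$ is a zero divisor in $\Gamma$ (it is only injective by wedging on the subalgebra $\bigwedge(P_1,\ldots,P_{r-1})$, as you note at the end, which is not enough here). Once a coefficient $\gamma_j$ itself involves $P_r$, the antidiagonal term $\gamma_j\wedge c_jP_r$ in your system $\sum_i\pm\gamma_i\wedge e(f_i,u_{r-j+1})=0$ vanishes identically and imposes no constraint on $\gamma_j$, and there is no legitimate cancellation of $P_r$; this is exactly how the relations of Theorem \ref{main2} evade your induction.

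The lemma is genuinely a statement about coefficients in $\bigwedge(P_1,\ldots,P_{r-1})$, and that hypothesis must enter the argument. Your pairing idea does work once it is used: if $\gamma_i\in\bigwedge(P_1,\ldots,P_{r-1})$ and $\sum_i\gamma_i\wedge f_i=0$, then pairing with $u_{r-j+1}$ produces $\pm\gamma_j\wedge c_jP_r$ plus terms lying in $\bigwedge(P_1,\ldots,P_{r-1})$, since for $i\neq j$ (distinct exponents) $e(f_i,u_{r-j+1})$ is zero below the antidiagonal and a multiple of some $P_k$ with $k<r$ above it, by Proposition \ref{main1}; the decomposition $\Gamma=\bigwedge(P_1,\ldots,P_{r-1})\oplus P_r\wedge\bigwedge(P_1,\ldots,P_{r-1})$ then gives $\gamma_j\wedge P_r=0$, hence $\gamma_j=0$, and the $2\times 2$ block of Proposition \ref{mainD} handles $D_{2n}$ as you indicate. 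Note, finally, that the paper needs much less than Propositions \ref{main1} and \ref{mainD}: after reducing a homogeneous relation so that the coefficient of the maximal surviving index equals $P_1\wedge\cdots\wedge P_{r-1}$ (killing the lower-exponent coefficients by degree), a single pairing with $u_{r-j+1}$ and only the antidiagonal values \eqref{t}, \eqref{t1} complete the proof.
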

Taking this lemma for granted (the proof is at the end of this section), we can prove Proposition \ref{fun}.
\begin{proof}[Proof of Proposition \ref{fun}]
 Suppose that we have a relation $\sum\limits_{i=1}^r\lambda_iu_i+\sum\limits_{j=1}^r\mu_jf_j=0$. Then  apply $\cac\otimes 1$ and get $\sum\limits_{i=1}^r\lambda_if _i=0$,  so if we assume that the $f_i$ are linearly independent, we get $\lambda_i=0$ for all $i$  and in turn that  also all the $\mu_j$ are $0$.
\end{proof}  Notice that the only property we have used of the $u_i$ is the fact that $(\cac\otimes 1)u_i=f_i$.

\subsection{The main form\label{baf}} The Killing form on $\g$ induces an invariant  graded symmetric bilinear form on $\bigwedge  \mathfrak g\otimes\mathfrak g $ with values in  $\bigwedge  \mathfrak g $,
given by
\begin{equation}\label{formae}e(a\otimes x,b\otimes y)=(x,y)a\wedge b\end{equation}
for  $x,y\in \mathfrak g$, $a,b\in \bigwedge  \mathfrak g $.  In particular, by invariance,  this form induces a form on $A$ with values in $\Gamma=\bigwedge(P_1,\ldots,P_{r}).$
\subsubsection{Some pairings} 
Recall the following   properties of the exponents, (cf. \cite{Bou}): $m_1=1$ while $ m_r=h-1$, where $h$  is the Coxeter number. Also, the $m_i$ come naturally into pairs adding to  $h$.  We say that two indices $i,j=1,\ldots,r$ are {\it complementary} if the corresponding exponents sum to the Coxeter number. Since  we have ordered the exponents increasingly, the indices  $i,r-i+1$ are complementary. The exponents are all distinct except in the case of $D_{2n}$ where the exponent $2n-1$ has multiplicity two.
\vskip10pt
Consider  the matrix of scalar products $e(f_i,u_j)$; we will prove  that, for complementary indices $i,r-i+1$  we have
\begin{equation}\label{t}
 e(f_i,u_{r-i+1}) = c_i P_r,\quad 0\neq c_i\in\mathbb Q
\end{equation} 
if $\g$ is not of type $D_{2n}$. In this case we have  $m_n=m_{n+1}=2n-1$, and we can choose the primitive invariants   in such a way that \eqref{t}
holds and furthermore:
\begin{equation}\label{t1}
 e(f_n, u_{n})=e(f_{n+1},u_{n+1})=0.
 \end{equation} 
Formulas \eqref{t}, \eqref{t1} will be proved in \S \,\ref{lacu}. Assuming these formulas we can finish the proof of Lemma \ref{Ma}, hence that of   Theorem \ref{main}. 
\begin{proof}[Proof of Lemma \ref{Ma}] 
Remark that, if there is a non trivial relation $\sum\limits_{j=1}^r\mu_jf_j=0$,  we may assume that it  is homogeneous, that is all terms have the same degree.  Moreover, given an index $j$,    multiplying by a suitable element of  $\bigwedge(P_1,\ldots,P_{r-1}) $ we can reduce ourselves to the case in which $\mu_j= P_1\wedge P_2\wedge \ldots \wedge P_{r-1} .$

Notice now  that the   coefficient $\mu_h$ of the terms $\mu_hf_h$ for which $m_h<m_j$  has  degree higher than the maximum allowed degree, hence it is zero. Thus, if we choose for $j$ the maximum for which $\mu_j\neq 0$  and  if the exponent $m_j$ has multiplicity $1$, we are reduced to prove that  
\begin{equation}\label{f}P_1\wedge P_2\wedge \ldots \wedge P_{r-1} f_j\neq 0.\end{equation} The previous condition is satisfied for all exponents unless    
 we are in the case of $D_{2n}$. In this case the exponent $2n-1$ appears twice. The corresponding elements $f_n ,f_{n+1}$ have the same degree $4n-2$, so that we have to show that $P_1\wedge P_2\wedge \ldots \wedge P_{r-1} f_n$ and $P_1\wedge P_2\wedge \ldots \wedge P_{r-1} f_{n+1}$ are linearly independent.

Unless we are in the case $D_{2n}$ for the exponent of multiplicity two,  we are reduced to prove \eqref{f}. 
By \eqref{t} we have
$$ e(P_1\wedge P_2\wedge \ldots \wedge P_{r-1} f_j,u_{r-j+1})= c_j\ P_1\wedge P_2\wedge \ldots \wedge P_{r-1} \wedge P_r\neq 0.$$
In the remaining case, linear independence of     $P_1\wedge P_2\wedge \ldots \wedge P_{r-1} f_n$ and $P_1\wedge P_2\wedge \ldots \wedge P_{r-1} f_{n+1}$   follows in the same way using also formula \eqref{t1}.\end{proof}\subsection{Proof of Formulas \eqref{t} and \eqref{t1} }

The proof of  formulas \eqref{t} and \eqref{t1} will be    based on a computation in the symmetric algebra, which in turn, by the Chevalley restriction theorem, is deduced from the computations performed in \S \ref{iW}. 
\subsection{Invariants of the Weyl group\label{iW}} The study of invariants both in the symmetric and the exterior algebra of $\g^*$ depends on special properties of the reflection representation of the Weyl group, in particular of the action of a Coxeter element. We refer to \cite{Bou} for the details. Let $c$  be a Coxeter element of $W$ (i.e., a product of all simple reflections). Recall that  its order $h$ is the Coxeter number.  Set $\zeta=e^{\frac{2\pi i}{h}}$. The eigenvalues of $c$ in its reflection representation $\mathfrak h^*$ are $h$-th roots of 1, $\zeta^{m_i},\ 1\leq m_i<h$ and the $m_i$ are by definition {the exponents. Since the reflection representation is real, together with any eigenvalue $\eta$  also appears its conjugate $\bar\eta=\eta^{-1}$.  
 
Fix a basis $\{X_1,\ldots,X_r\}$ of $\mathfrak h^*$ made of eigenvectors relative to these eigenvalues. In particular, $X_1$ is relative to the eigenvalue  $\zeta$. 
We pair  conjugate eigenvalues,  and the Killing form on the space spanned by two conjugate eigenvectors $X_i,\ X_{r-i+1}$  is (after normalization) $\begin{pmatrix} 0&1\\1&0
\end{pmatrix}$.  In the case of the eigenvalue  $-1$, which is conjugate of itself, we either have a single eigenvector, which we may assume to be of norm 1, or  two eigenvectors    in type $D_{2n}$.  Then in  \cite[6.2, Proposition 2] {Bou} it is shown that to each exponent $m_i$ we may associate a $W$--invariant polynomial on $\mathfrak h$  such  that its leading term    in the variable $X_1$  is $X_1^{m_i }X_{r-i+1}$. 

\subsection{Symmetric algebra}
  Let  $S=S(\g^*)$  be the algebra of polynomial functions  on $\mathfrak g$, and $R=S^{\mathfrak g}$ be  the ring of invariants.  We have that $R$ is a polynomial algebra in generators $\psi_1,\ldots, \psi_r$ of degrees $m_i+1$.  We  observe that these invariants are so that when we restrict $\psi_i$ to $\mathfrak h$ the leading term in $X_1$ is a nonzero multiple $X_1^{m_i }X_{r-i+1}$,
 as we have seen in \S \ref{iW}.\par
 Let  $d$ be the usual differential of functions so that, when $f\in R$,  we have that $df\in(S\otimes \mathfrak g^*)^{\mathfrak g}$. 
\begin{definition}\label{bezout}
The    Bezoutiante matrix, 
 is the matrix (with entries in $R^+$) of the scalar products $(d\psi_i,d\psi_j)$.
 \end{definition}
 \par
We first point out  that the construction of the Bezoutiante is compatible with  Chevalley restriction.
\begin{lemma}\label{redbez}
Given two invariants $p,q\in R$  let $\bar p,\bar q$ be their restriction to $\mathfrak h$. Then
$$\overline{(dp,dq)}=(d\bar p,d\bar q).$$
\end{lemma}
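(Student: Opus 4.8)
The plan is to pass from differentials to gradients by means of the Killing form and to observe that the gradient of an invariant, evaluated at a point of $\h$, again lies in $\h$. Use the Killing form to identify $\g^*$ with $\g$; then for $p\in R$ the differential $dp$ becomes a $\g$-equivariant polynomial map $\nabla p\colon\g\to\g$, its \emph{gradient}, characterized by $dp_x(v)=(\nabla p_x,v)$ for all $v\in\g$, so that $(dp,dq)(x)=(\nabla p_x,\nabla q_x)$. Likewise, since the Killing form restricts to a nondegenerate form on $\h$, for $\bar p=p|_\h$ we have an $\h$-gradient $\nabla\bar p$ with $(d\bar p,d\bar q)(t)=(\nabla\bar p_t,\nabla\bar q_t)$. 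Thus it suffices to compare the two gradients.

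First I would prove the key claim: if $t\in\h$ is regular, then $\nabla p_t\in\h$. Indeed, $\g$-invariance of $p$ gives $\frac{d}{ds}\big|_{s=0}\,p\big(e^{s\,\mathrm{ad}(a)}t\big)=0$ for every $a\in\g$, that is $dp_t([a,t])=0$, i.e. $\nabla p_t$ is orthogonal to $\mathrm{im}(\mathrm{ad}\,t)$. Since $\mathrm{ad}\,t$ is skew-symmetric for the Killing form, $(\mathrm{im}\,\mathrm{ad}\,t)^\perp=\ker\mathrm{ad}\,t=\g^t$, and $\g^t=\h$ because $t$ is regular. Hence $\nabla p_t\in\h$.

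Next I would deduce that at such $t$ the $\h$-gradient of $\bar p$ coincides with $\nabla p_t$: for $v\in\h$ we have $d\bar p_t(v)=dp_t(v)=(\nabla p_t,v)$, and since $\nabla p_t\in\h$ and the form on $\h$ is the restriction of the Killing form, this exhibits $\nabla p_t$ as $\nabla\bar p_t$. Therefore $(d\bar p,d\bar q)(t)=(\nabla p_t,\nabla q_t)=(dp,dq)(t)=\overline{(dp,dq)}(t)$ for every regular $t\in\h$.

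Finally, since the regular elements form a dense Zariski-open subset of $\h$ and both $\overline{(dp,dq)}$ and $(d\bar p,d\bar q)$ are polynomial functions on $\h$, equality on this dense set forces equality everywhere, which is the assertion. The argument is essentially routine; the only points requiring any care are the identification $\g^t=\h$ for regular $t$ and the density step, both of which are standard, so I do not anticipate a genuine obstacle here.
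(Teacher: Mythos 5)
Your proof is correct, but it takes a genuinely different route from the paper's. The paper argues pointwise on all of $\h$ by a weight computation: writing an invariant $F$ in root-vector coordinates as $F=\bar F+f$, where $\bar F$ collects the monomials involving only Cartan coordinates, invariance forces every monomial of $f$ to contain coordinates of both positive and negative roots, so $f$ vanishes to order at least $2$ along $\h$ and its differential vanishes there; the orthogonality of $\h$ against $\mathfrak u^+\oplus\mathfrak u^-$ for the Killing form then gives the identity at every point of $\h$, with no regularity or density argument needed. You instead show that at a regular $t\in\h$ the Killing gradient $\nabla p_t$ is orthogonal to $\mathrm{im}(\mathrm{ad}\,t)$, hence lies in $\ker(\mathrm{ad}\,t)=\h$ by skew-symmetry of $\mathrm{ad}\,t$, identify it with the $\h$-gradient of $\bar p$ via nondegeneracy of the restricted form, and conclude by Zariski density of the regular locus in $\h$. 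Your version is coordinate-free, uses only infinitesimal invariance, and adapts immediately to more general situations where the centralizer of a generic point of the restricting subspace equals that subspace; the paper's version is elementary monomial bookkeeping that yields the equality everywhere directly and avoids the regularity and density step. All the steps you flag (skew-symmetry, $\g^t=\h$ for regular $t$, density of regular elements, polynomiality of both sides) are indeed standard and correctly invoked, so there is no gap.
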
\begin{proof}
Use the decomposition $\mathfrak g=\mathfrak h\oplus (\mathfrak u^+\oplus \mathfrak u^-)$, which is orthogonal with respect to the Killing form. Write a polynomial on $\mathfrak g$ in the corresponding coordinates associated to root vectors, if  $F$ is such a polynomial we divide it as $F=\bar F+f$  where $\bar F$ collects only the monomials in the coordinates of $\mathfrak h$ while every monomial in $f$ contains at least one coordinate $x_\alpha$ associated to a root. Now if  $F$ is invariant in particular each monomial must have weight 0 for $\mathfrak h$, hence if some variables $x_\alpha$ appear  for a positive root, there must be other variables for negative roots. This means that $f$ vanishes of order at least 2 on $\mathfrak h$, hence also its differential vanishes and we have the claim by the orthogonality relation.
\end{proof}
 \begin{proposition}\label{lap} The entries of the matrix  $( d\psi_i,d\psi_j )$ below the  antidiagonal lie in $(R^+)^2$. On the antidiagonal they are    non zero multiples of $\psi_r$ modulo squares. \end{proposition}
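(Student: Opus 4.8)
The plan is to reduce the whole statement, via Lemma~\ref{redbez}, to a leading--term computation in $\CC[\h]^W$, extracting the needed information from a Coxeter element. Restricting to $\h$ and writing $\bar\psi_k$ for the restriction of $\psi_k$, we have $\CC[\h]^W=\CC[\bar\psi_1,\ldots,\bar\psi_r]$ with $\deg\bar\psi_k=m_k+1\le h$ and $\bar\psi_r$ the unique generator of degree $h$; hence $M=R^+/(R^+)^2$ vanishes in degrees $>h$, while $M_h$ is one--dimensional, spanned by the class of $\psi_r$. Now $(d\psi_i,d\psi_j)$ is homogeneous of degree $m_i+m_j$; a complementary pair has $m_i+m_j=h$, and $i+j>r+1$ forces $m_i+m_j\ge h$, with equality only in type $D_{2n}$ for $i=j=n+1$. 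Therefore every entry strictly below the antidiagonal automatically lies in $(R^+)^2$, with the single exception of $(d\psi_{n+1},d\psi_{n+1})$ in type $D_{2n}$, and the proposition reduces to two assertions about invariants of degree $h$: that the antidiagonal entries are \emph{not} in $(R^+)^2$, and that $(d\psi_{n+1},d\psi_{n+1})$ \emph{is}.

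To decide membership in $(R^+)^2$ in degree $h$ I would use the eigenbasis $X_1,\ldots,X_r$ of a Coxeter element $c$, with $cX_l=\zeta^{m_l}X_l$ and the Killing form normalised so that $(X_l,X_m)=\delta_{l+m,\,r+1}$ (in type $D_{2n}$ one chooses a hyperbolic basis $X_n,X_{n+1}$ of the $(-1)$--eigenspace adapted to the generators of \S\ref{iW}, so that $\bar\psi_n$, $\bar\psi_{n+1}$ have leading $X_1$--terms proportional to $X_1^{2n-1}X_{n+1}$, $X_1^{2n-1}X_n$). Since every monomial of a homogeneous $W$--invariant has $c$--weight $\equiv0\pmod h$ and $m_1=1$, the highest power of $X_1$ occurring in $\bar\psi_i$ is exactly $m_i$ for $i<r$, carried by the single monomial $c_i'X_1^{m_i}X_{r+1-i}$ with $c_i'\neq0$, while $\bar\psi_r$ has leading term $c_r'X_1^{\,h}$. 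Consequently the coefficient of $X_1^{\,h}$ defines a linear functional $\ell$ on degree--$h$ invariants that is nonzero on $\psi_r$ and vanishes on every product of two or more generators (a product of total degree $h$ has top $X_1$--power $\le h-2$); hence a degree--$h$ invariant $q$ lies in $(R^+)^2$ if and only if $\ell(q)=0$, and otherwise $q\equiv\ell(q)\,\ell(\psi_r)^{-1}\psi_r$ modulo squares.

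It remains to evaluate $\ell$, using $(d\bar\psi_i,d\bar\psi_j)=\sum_{l=1}^{r}(\partial_{X_l}\bar\psi_i)(\partial_{X_{r+1-l}}\bar\psi_j)$ and the weight bound on the factors. For a complementary pair $\{i,r+1-i\}$ only the summand $l=r+1-i$ can attain $X_1$--degree $h$, and it contributes a nonzero multiple of $c_i'c_{r+1-i}'$; hence $\ell\neq0$ and the antidiagonal entry is a nonzero multiple of $\psi_r$ modulo squares. For $(d\bar\psi_{n+1},d\bar\psi_{n+1})$ in type $D_{2n}$ the computation degenerates because the leading $X_1$--term of $\bar\psi_{n+1}$ involves $X_n$, not $X_{n+1}$: the two factors $\partial_{X_l}\bar\psi_{n+1}$ and $\partial_{X_{r+1-l}}\bar\psi_{n+1}$ reach $X_1$--degree $2n-1$ for different values of $l$ (namely $l=n$ and $l=n+1$), so no summand reaches $X_1$--degree $h$; as $W$--invariance forbids an $X_1^{\,h-1}$ term, the top $X_1$--power is $\le h-2$ and $\ell=0$. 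I expect the main obstacle to be exactly this leading--term bookkeeping: arranging the $D_{2n}$ normalisation so that the Killing form and the leading terms of $\bar\psi_n,\bar\psi_{n+1}$ take the required shapes simultaneously, handling the edge cases $i=1$ and $i=r$ on the antidiagonal (where $\bar\psi_1$ is linear in $X_1$ and $\bar\psi_r$ is a pure power of $X_1$, so the factor $h$ from $\partial_{X_1}X_1^{\,h}$ enters), and verifying that at the top $X_1$--degree the various summands do not cancel.
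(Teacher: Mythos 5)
Your proposal is correct and follows essentially the same route as the paper: reduce to $\mathfrak h$ via Lemma~\ref{redbez}, dispose of the entries below the antidiagonal by degree considerations, and detect the $\psi_r$-component of the antidiagonal entries through the coefficient of $X_1^{h}$ in the Coxeter eigenbasis, using the leading terms $X_1^{m_i}X_{r-i+1}$ recalled in \S\ref{iW}. Your explicit treatment of the degree-$h$ entry $(d\psi_{n+1},d\psi_{n+1})$ in type $D_{2n}$ is more careful than the paper's terse ``degree considerations'' (that entry is in fact handled later in the paper by the explicit choice $q_{2n}\pm\sqrt{-1}\,\mathcal P$), but it is the same argument rather than a different approach.
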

\begin{proof} The first statement follows by degree considerations.
As for the second, by Lemma \ref{redbez} we can compute the Bezoutiante matrix on the $W$-invariant functions $\bar\psi_i$ on $\mathfrak h$.  

 The form in the basis $X_1,\ldots,X_r$ introduced above, equals $\sum_{(i,	\sigma(i))}X_iX_{\sigma(i)}$ (where $\sigma$ is the involution pairing complementary indices,  the sum is over the orbits of $\sigma$ which are made by two  or one element).         It follows that  the invariant  $(d\bar\psi_i,d\bar\psi_j)$  has leading term in $X_1$   proportional to   $X_1^h$  if $i,j$  are complementary  indices, arising from $$\pd{X_1^{m_i }X_{j}}{X_{j}}\pd{X_1^{ m_j }X_{i}}{X_{i} } =X_1^h.$$ 
 Hence, if $i+j=r+1$,   by the previous relation we have $(d\bar\psi_i,d\bar\psi_j)=  c_i\bar\psi_r\mod (R^+)^2$  for some  nonzero $c_i$.  
 \end{proof}
\subsection{Transgression}  
Consider the map $\cac:\mathfrak g^*\to \bigwedge^2 \mathfrak g^*$. Since the elements in its image commute with respect to exterior multiplication, $\delta$ admits a unique extension to   an algebra homomorphism$$s:S(\g^*)\to \bigwedge {}^{even} \mathfrak g^*.$$  Observe that $s$ takes as values coboundaries, in particular $\cac s=0$. Using   the multiplication map $m$ defined by \eqref{multiplication}
we set, for   $a\in S(\g^*)$: 
\begin{equation}
\label{tras}\pip 
(a):=m((s\otimes 1)da).
\end{equation}

It follows from the explicit description  of the transgression map, which is obtained in \cite{Chev} (and in formula (231) of  \cite[Theorem 64]{K}), that on homogeneous invariants of positive degree $d$ our map $t$ coincides up to the constant  $ {(d!)^2}/{(2d+1)!}$ with the classical transgression map. See also \cite[Remark 6.11]{Mein} Êfor a discussion on terminology.
We prefer to eliminate this constant since it is cumbersome in the formulas. The key result on the transgression map \cite{Car} can be reformulated as follows.

\begin{theorem}\label{proptransgr} The map $\pip$ vanishes on $(R^+)^2$ and induces an isomorphism of $ R^+ /(R^+)^2$ with the space of primitive elements of $\Gamma$.

Thus, given generators $\psi_i$ of degree $m_i+1$  for   $R$,  one has that the elements $P_i:=\pip( \psi_i)$  are  primitive generators of $\Gamma$, of degrees $2m_i+1$.
\end{theorem}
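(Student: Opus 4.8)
The essential content here is the classical Cartan--Chevalley transgression theorem, and the excerpt has already recorded the key bridge: in each positive degree $d$ the map $t$ coincides with $(d!)^2/(2d+1)!$ times the classical transgression, by \cite{Chev} and formula (231) of \cite[Theorem 64]{K}. So the plan is to quote \cite{Car} for the homological statement and to check the elementary compatibilities by hand. I would split the argument into four steps: (i) $t$ maps $R$ into $\Gamma$; (ii) $t$ kills $(R^+)^2$, hence induces a map $\bar t\colon M\to\Gamma$, where $M:=R^+/(R^+)^2$; (iii) $\bar t$ is an isomorphism onto the space of primitive elements of $\Gamma$; (iv) specializing to the generators $\psi_i$ gives the last assertion.

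Step (i) is immediate: $\delta\colon\g^*\to\bigwedge^2\g^*$ is $\g$-equivariant (being dual to the bracket), hence so is its multiplicative extension $s$, and so is $m$; since $da\in(S\otimes\g^*)^\g$ for $a\in R$, the element $t(a)=m((s\otimes 1)da)$ lies in $(\bigwedge\g^*)^\g=\Gamma$. For step (ii) I would use the Leibniz rule: because $s$ is an algebra homomorphism with values in even degree, for $b,c\in R^+$ one gets $t(bc)=s(c)\wedge t(b)+s(b)\wedge t(c)$ with no intervening signs. Now $s(b)$ and $s(c)$ are coboundaries (as noted, $s$ takes values in coboundaries), say $s(c)=\delta\gamma$, while $t(b)\in\Gamma\subseteq\ker\delta$ by step (i) and \eqref{Gamma}; hence $s(c)\wedge t(b)=\delta(\gamma\wedge t(b))$, and symmetrically, so $t(bc)\in\im\delta$. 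On the other hand $t(bc)\in\Gamma$, and $\Gamma\cap\im\delta=0$ by Kostant's Hodge decomposition (an element of $\Gamma=\ker L$ that is $\delta$-exact lies in $\im L$, hence vanishes). Thus $t(bc)=0$ and, by multiplicativity, $t$ vanishes on all of $(R^+)^2$.

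Step (iii) is the substantive one. Here I would simply invoke the classical transgression theorem \cite{Car}: the classical transgression induces a graded isomorphism of $M$ onto the space $\mathrm{Prim}(\Gamma)$ of primitive elements of the Hopf algebra $\Gamma=H^*(\g)$; since $\bar t$ agrees with it up to a nonzero scalar in each fixed degree, $\bar t$ is itself an isomorphism of $M$ onto $\mathrm{Prim}(\Gamma)$. A fully self-contained proof would instead use the acyclicity of the Weil algebra $S(\g^*)\otimes\bigwedge\g^*$ to realize $t$ as the connecting map of the associated filtration, together with the Hopf--Borel fact that the primitives of an exterior algebra on odd-degree generators are exactly the span of those generators; this is where all the real work lies, and it is the main obstacle if one refuses to cite the classical theorem. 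Finally, for step (iv): since $R=\CC[\psi_1,\dots,\psi_r]$ the classes $\bar\psi_i$ form a homogeneous basis of $M$ with $\deg\bar\psi_i=m_i+1$, so applying the isomorphism $\bar t$ shows that the elements $P_i=t(\psi_i)$ form a basis of $\mathrm{Prim}(\Gamma)$ --- equivalently a set of primitive generators of the exterior algebra $\Gamma$ --- each of degree $2(m_i+1)-1=2m_i+1$.
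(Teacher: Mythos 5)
Your argument is correct and follows essentially the same route as the paper: the paper states this theorem as a reformulation of Cartan's classical transgression theorem, relying on the observation (already quoted in your proposal) that $t$ agrees with the classical transgression up to the nonzero scalar $(d!)^2/(2d+1)!$ in each positive degree, and offers no further proof. Your steps (i) and (ii) — equivariance, and the vanishing on $(R^+)^2$ via the Leibniz rule together with the fact that an invariant coboundary is zero — are harmless self-contained additions using facts the paper itself establishes later (formula \eqref{fov1} and the argument proving \eqref{ccinv}), while the substantive part, the isomorphism onto the primitives, is delegated to the cited classical result exactly as in the paper.
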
  

In particular, choose  a set $\{\psi_1,\ldots,\psi_r\}$ of homogeneous generators of the ring $R$ of symmetric invariants. We obtain a set of primitive generators $P_i:=\pip( \psi_i)$ of $\Gamma$ and the corresponding elements $u_i,f_i$ defined by formula \eqref{deffu}. To proceed further, we need some general formulas.  
\begin{lemma} For all  homogeneous elements $a,b\in S(\g^*)$ we have
 \begin{align}\label{fov1}
\pip 
(ab) &=\pip 
(a)\wedge s(b)+s(a)\wedge \pip 
(b),\\\label{fov2}
\cac \pip 
(a)&=\deg(a)     s(a).
\end{align}
\end{lemma}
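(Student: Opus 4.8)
The plan is to prove both identities by a short direct computation, unwinding the definition \eqref{tras} in coordinates. Fix dual bases $\{x_h\}$ of $\g$ and $\{x^h\}$ of $\g^*$, and write $\partial_h$ for the partial derivative with respect to the coordinate function $x^h$, so that $da=\sum_h\partial_h a\otimes x^h$ in $S(\g^*)\otimes\g^*$ and hence
\[
\pip(a)=m\bigl((s\otimes1)da\bigr)=\sum_{h=1}^n s(\partial_h a)\wedge x^h .
\]
The facts I will use are: $s$ is an algebra homomorphism; $s$ restricts to $\cac$ on $\g^*$, so $\cac(x^h)=s(x^h)$; $\cac s=0$; and $\cac$ is a degree $+1$ derivation of $\bigwedge\g^*$. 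The point that keeps all the bookkeeping trivial is that $s$ takes values in $\bigwedge^{even}\g^*$, so every $s(p)$ is central in the exterior algebra and the Koszul signs appearing below are all $+1$.

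For \eqref{fov1} I would start from the Leibniz rule $\partial_h(ab)=\partial_h a\cdot b+a\cdot\partial_h b$, apply $s\otimes1$ and $m$, and use multiplicativity of $s$ to obtain
\[
\pip(ab)=\sum_{h}s(\partial_h a)\wedge s(b)\wedge x^h+\sum_{h}s(a)\wedge s(\partial_h b)\wedge x^h .
\]
In the first sum $s(b)$ has even degree, hence commutes with $x^h$, so $s(\partial_h a)\wedge s(b)\wedge x^h=\bigl(s(\partial_h a)\wedge x^h\bigr)\wedge s(b)$; summing over $h$ gives $\pip(a)\wedge s(b)$, while the second sum is already $s(a)\wedge\pip(b)$. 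This yields \eqref{fov1} (homogeneity is not even needed here).

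For \eqref{fov2} I would apply $\cac$ to $\pip(a)=\sum_h s(\partial_h a)\wedge x^h$. Since $\cac$ is a derivation, $s(\partial_h a)$ has even degree, $\cac s(\partial_h a)=0$, and $\cac(x^h)=s(x^h)$, only the term in which $\cac$ hits $x^h$ survives, so
\[
\cac\pip(a)=\sum_h s(\partial_h a)\wedge s(x^h)=\sum_h s\bigl(x^h\,\partial_h a\bigr)=s\Bigl(\sum_h x^h\,\partial_h a\Bigr),
\]
using multiplicativity and linearity of $s$. Euler's identity $\sum_h x^h\,\partial_h a=\deg(a)\,a$ for homogeneous $a$ then gives $\cac\pip(a)=\deg(a)\,s(a)$, which is \eqref{fov2}. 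I do not foresee any genuine obstacle: the only thing to watch is the graded-commutativity bookkeeping, and the even parity of the values of $s$ makes every sign disappear.
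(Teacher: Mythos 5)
Your proof is correct and follows essentially the same route as the paper: the Leibniz rule for $d$ together with multiplicativity of $s$ and the evenness of its image for \eqref{fov1}, and the derivation property of $\cac$ with $\cac s=0$, $\cac(x^h)=s(x^h)$ and Euler's identity for \eqref{fov2}. The only cosmetic difference is that you write \eqref{fov1} in coordinates while the paper argues directly with $d(ab)=b\,da+a\,db$; the substance is identical.
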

\begin{proof}
\begin{align*} \pip 
(ab)&=m((s\otimes 1)d(ab) )=m((s\otimes 1)(bd(a )+ad(b) ))\\&=m(s(b)\wedge (s\otimes 1) d(a ))+m(s(a)\wedge  (s\otimes 1) (d(b) )\\&=s(b)\wedge m( (s\otimes 1) d(a ))+s(a)\wedge m( (s\otimes 1) (d(b) )= \pip 
(a)\wedge s(b)+s(a)\wedge \pip 
(b).\end{align*}
This proves \eqref{fov1}. To prove \eqref{fov2},   let $da= \sum\limits_{h=1}^n  i_S(x_h )a\otimes x^h$ (where $i_S(x_h)$ is the directional derivative w.r.t. $x_h$),  so that $\deg(a)a=\sum\limits_{h=1}^n i_S(x_h ) x^h$ and $\pip 
(a)=\sum\limits_{h=1}^n s(i_S(x_h ))\wedge x^h$. Then 
\begin{align*} \cac \pip 
(a)&=\sum_{h=1}^n\cac s(i_S(x_h ))\wedge x^h+\sum_{h=1}^n s(i_S(x_h ))\wedge \cac x^h=\sum_{h=1}^ns(i_S(x_h ))\wedge \cac x^h\\&=\sum_{h=1}^n s(i_S(x_h ))\wedge s(x^h)=s(\sum_{h=1}^n i_S(x_h ) x^h)=\deg(a)s(a).\end{align*} 
\end{proof}\begin{definition}
Define for any homogeneous element $a\in S(\g^*)$:
\begin{equation}
\label{nuf}f(a):= (s\otimes 1)da,\quad u(a):= (t\otimes 1)da.
\end{equation}
\end{definition}
Note that, since  $da=\sum\limits_{h=1}^n i_S(x_h )a\otimes x^h$, we have $\deg(i_S(x_h )a)=\deg(a)-1$,  \eqref{fov2} implies
\begin{equation}\label{nnuf}
(\cac \otimes1)u(a)=(\deg(a)-1)f(a).
\end{equation}
\begin{lemma}
For every $i=1,\ldots ,r$, take $f_i$ as in formula \eqref{gliuf}. Then
 $\label{foto}f_i= f(\psi_i)$.\end{lemma}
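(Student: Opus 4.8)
The plan is to boil the statement down to a single identity about the transgression map. Writing $d\psi_i=\sum_h i_S(x_h)\psi_i\otimes x^h$, one has $f(\psi_i)=(s\otimes 1)d\psi_i=\sum_h s(i_S(x_h)\psi_i)\otimes x^h$, while by \eqref{gliuf} $f_i=\frac{1}{\deg P_i}\sum_h i(x_h)P_i\otimes x^h$ with $P_i=t(\psi_i)$. Since $\{x^h\}$ is a basis of $\g^*$ and $\deg P_i=2m_i+1=2\deg\psi_i-1$, the equality $f_i=f(\psi_i)$ is equivalent to
\[
i(x)\,t(\psi_i)=(2\deg\psi_i-1)\,s\bigl(i_S(x)\psi_i\bigr),\qquad x\in\g .
\]
I would prove this more generally for every homogeneous $a\in R^+$. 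Both sides are additive in $a$ and vanish on $(R^+)^2$ — indeed $t$ kills $(R^+)^2$ by Theorem~\ref{proptransgr}, and $s$ kills every positive-degree invariant $b$, since $t(b)\in\Gamma=\ker\delta$ forces $0=\delta t(b)=(\deg b)\,s(b)$ by \eqref{fov2} — so it suffices to treat $a$ a generator.

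Next, the \emph{value} of the constant is automatic once one knows that $i(x)\,t(a)$ is a scalar multiple $c(\deg a)\,s(i_S(x)a)$, with $c(\deg a)$ independent of $x$ and of the chosen generator of that degree. Since the forms $s(\,\cdot\,)$ have even degree, $t(a)=\sum_h s(i_S(x_h)a)\wedge x^h=\sum_h x^h\wedge s(i_S(x_h)a)$; combining this with the standard identity $\sum_h \varepsilon(x^h)\,i(x_h)=\deg$ on $\bigwedge\g^*$ (already used in the proof that $m(f_i)=P_i$) gives
\[
(2\deg a-1)\,t(a)=\sum_h x^h\wedge i(x_h)t(a)=c(\deg a)\sum_h x^h\wedge s(i_S(x_h)a)=c(\deg a)\,t(a),
\]
and as $t(a)\neq 0$ for a generator, $c(\deg a)=2\deg a-1$.

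It remains to establish the proportionality $i(x)\,t(a)\in\mathbb C\cdot s(i_S(x)a)$, which is the real content. For this I would invoke the explicit description of $t$ (\cite{Chev}; formula (231) of \cite[Theorem~64]{K}): up to a universal constant, $t(a)$ is the alternating $(2\deg a-1)$-form obtained from the symmetric multilinear form of $a$ by leaving one argument free and pairing the remaining ones, via the bracket, into $\deg a-1$ slots. Contracting with $x$: the terms with $x$ in the free slot reassemble into a multiple of $s(i_S(x)a)$, and the terms with $x$ inside a bracket — producing a $[x,-]$ in one argument of the symmetric form — recombine, after antisymmetrization and using the Jacobi identity to shuttle $[x,-]$ between bracket slots, into a further multiple of $s(i_S(x)a)$, precisely because infinitesimal invariance of $a$ lets one trade $[x,-]$ in a bracket slot against $[x,-]$ in the free slot. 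Hence $i(x)t(a)=c\,s(i_S(x)a)$, with $c=2\deg a-1$ by the previous paragraph, and $a=\psi_i$ gives $f_i=f(\psi_i)$. The delicate step is exactly this last cancellation: invariance of $a$ is used essentially — the identity already fails for a non-invariant $a=y^{\deg a}$, $y\in\g^*$ — which is why the explicit form of the transgression is needed and a bare Leibniz expansion of $t(a)=m((s\otimes 1)da)$ (which only gives $i(x)t(a)=\sum_h[i(x)s(i_S(x_h)a)]\wedge x^h+s(i_S(x)a)$, still leaving one to identify the sum) does not suffice.
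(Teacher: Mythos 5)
Your proposal follows essentially the same route as the paper: both arguments reduce the lemma to the single identity $i(x)\,t(a)=c\,s(i_S(x)a)$ for an invariant $a$, and then pin down the constant by applying the degree operator $\sum_h\varepsilon(x^h)i(x_h)$, which is the same computation as the paper's comparison $m(f_i)=P_i=m(f(\psi_i))$; your bookkeeping $c=2\deg a-1=\deg P_i$ is consistent with \eqref{gliuf}. (Two harmless quibbles: the reduction modulo $(R^+)^2$ is not needed, since you only apply the identity to the generators $\psi_i$ and only need $t(\psi_i)=P_i\neq 0$; and $\Gamma=\ker\cac\cap\ker\partial$, not $\ker\cac$, though you only use the inclusion $\Gamma\subseteq\ker\cac$.)

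The one real divergence is how the proportionality itself is justified. The paper takes it verbatim from Kostant, \cite{K} Theorem 73, which states exactly that $i(x)t(a)=c\,s(i_S(x)a)$ for $a$ invariant. You instead sketch a derivation from the explicit transgression formula of \cite{Chev} and \cite[Theorem 64]{K}: you correctly identify that the Leibniz expansion leaves the sum $\sum_h\bigl[i(x)s(i_S(x_h)a)\bigr]\wedge x^h$ to be identified, and that invariance of $a$ must enter, but the decisive step --- the recombination of the terms where $x$ is contracted into a bracket slot, via antisymmetrization, Jacobi and infinitesimal invariance, into a multiple of $s(i_S(x)a)$ --- is asserted rather than carried out, and carrying it out with correct coefficients is precisely the content of Kostant's theorem. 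So as written there is a gap at exactly the point the paper outsources to \cite{K}; it is repaired either by citing \cite{K} Theorem 73 directly (after which your argument is complete and matches the paper's), or by actually performing the combinatorial cancellation you describe.
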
 
\begin{proof} 
Set for $a\in R$ homogeneous:$$f_a:=\frac{1}{  2\deg(a)+1}\sum_{h=1}^n i(x_h)t(a)\otimes x^h,$$ so that $f_i=f_{\psi_i}$ by \eqref{gliuf}.

By  \cite{K}, Theorem 73, one has that $i(x )t(a)=c s(i_S(x  )a)$ for any $x$ and $a$ invariant, where the constant $c$ depends on the normalizations we choose for $\pip$. Thus, since $(s\otimes 1)(da)=  \sum\limits_{h=1}^n s(i_S(x_h )a)\otimes  x^h,$  we have: $$f(a)=(s\otimes 1)(da)=  \sum_{h=1}^n s(i_S(x_h )a)\otimes  x^h= \sum_{h=1}^n c \  i (x_h  )t(a)\otimes  x^h,$$ so $f_i$ and $f(\psi_i)$ are proportional. To determine the proportionality constant $c$ it is enough to apply the multiplication map $m$.  $f_i$ has been normalized so that $m(f_i)=P_i$, while $m(f(\psi_i))=t(\psi_i)=P_i$ by definition.  Thus  $c=1/( 2\deg(a)+1) $ and everything follows.
\end{proof}

  \begin{proposition}\label{fine}  Given two elements  $a,b\in S(\g^*)$ we have 
\begin{equation}
\label{cc}e(f(a),f(b))= s((da,db)),\quad  e(u(a),f(b))+ e(f(a),u(b))=  \pip 
((da,db)).\end{equation} 
\end{proposition}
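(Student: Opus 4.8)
The plan is to prove both identities by expanding each side in a fixed pair of dual bases and then recognizing the resulting sums via the two structural facts already at our disposal: that $s$ is an algebra homomorphism and that $t$ satisfies the Leibniz-type rule \eqref{fov1}. Since every term in \eqref{cc} is bilinear in $(a,b)$, I may assume $a,b$ homogeneous. Fix dual bases $\{x_h\},\{x^h\}$ of $\g,\g^*$ with respect to the Killing form and write, as in the proof of \eqref{fov2}, $da=\sum_h i_S(x_h)a\otimes x^h$; then $f(a)=(s\otimes1)da=\sum_h s(i_S(x_h)a)\otimes x^h$ and $u(a)=(t\otimes1)da=\sum_h t(i_S(x_h)a)\otimes x^h$, and similarly for $b$.

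For the first identity, substituting these expressions into the definition \eqref{formae} of $e$ (read on $\bigwedge\g^*\otimes\g^*$ via the Killing identification) gives
$$e(f(a),f(b))=\sum_{h,k}(x^h,x^k)\,s(i_S(x_h)a)\wedge s(i_S(x_k)b).$$
Because $s\colon S(\g^*)\to\bigwedge^{even}\g^*$ is an algebra homomorphism, each summand equals $s\bigl(i_S(x_h)a\cdot i_S(x_k)b\bigr)$; pulling $s$ out of the finite sum and observing that $\sum_{h,k}(x^h,x^k)\,i_S(x_h)a\cdot i_S(x_k)b$ is exactly the function $(da,db)$ — the Killing pairing of the two differentials — yields $e(f(a),f(b))=s((da,db))$.

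For the second identity the same substitution gives
$$e(u(a),f(b))+e(f(a),u(b))=\sum_{h,k}(x^h,x^k)\Bigl[t(i_S(x_h)a)\wedge s(i_S(x_k)b)+s(i_S(x_h)a)\wedge t(i_S(x_k)b)\Bigr].$$
Applying \eqref{fov1} with $p=i_S(x_h)a$ and $q=i_S(x_k)b$, the bracket is precisely $t(pq)=t\bigl(i_S(x_h)a\cdot i_S(x_k)b\bigr)$; since $t$ is linear (a composite of $d$, $s\otimes1$ and $m$), pulling it out of the sum and again recognizing $(da,db)$ gives $e(u(a),f(b))+e(f(a),u(b))=t((da,db))$.

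The argument is essentially mechanical, and I do not expect a genuine obstacle. The only points requiring care are bookkeeping: tracking the identification $\g\cong\g^*$ so that the contraction of the two $\g^*$-slots of $f(a),f(b)$ reproduces exactly the coefficients $(x^h,x^k)$ entering the definition of $(da,db)$ — this is cleanest if one takes $\{x_h\}$ orthonormal — and using that $s$ takes values in the commutative subalgebra $\bigwedge^{even}\g^*$, so that $s(p)\wedge s(q)=s(pq)$ and the two terms of \eqref{fov1} carry no signs. All the substance is contained in the multiplicativity of $s$ and in the identity \eqref{fov1}.
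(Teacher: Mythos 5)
Your proof is correct and follows essentially the same route as the paper: the first identity from the multiplicativity of $s$, and the second by expanding $da,db$ in a basis (the paper uses an orthonormal one, you use dual bases) and applying the Leibniz rule \eqref{fov1} termwise to $(da,db)$.
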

\begin{proof}
By construction $f 
(a)=(s\otimes 1)d(a),\quad f 
(b)=(s\otimes 1)d(b)$ hence 
$$ e(f(a),f(b))=s((da,db)).$$
 To prove the second relation,  write $da=\sum\limits_{j=1}^n \alpha_j\otimes z_j,\ db=\sum\limits_{j=1}^n \beta_j\otimes z_j$ where $\{z_j\}$ is an orthonormal basis. Then $(da,db)=\sum\limits_{j=1}^n  \alpha_j \beta_j$, hence by \eqref{fov1} and \eqref{nuf} we have
\begin{align*}
t((da,db))&=\sum\limits_{j=1}^n \pip 
 (\alpha_j) \wedge s(\beta_j)  + \sum\limits_{j=1}^n  s(\alpha_j) \wedge \pip 
(\beta_j)\\
&=e((\pip 
\otimes 1)(da),(s\otimes 1)(db))+e((s\otimes 1)(da),(\pip 
\otimes 1)(db))\\
&=  e(u(a),f(b))+ e(f(a),u(b)),\end{align*} 
which is the required relation.\end{proof}
\begin{lemma}\label{lacc1}
For $a,b\in\bigwedge \g^*\otimes \g^*$, we have  that 
 \begin{equation}
\label{lacc}\partial e(a,b)- e(\partial\otimes 1(a), b) +(-1)^{\deg(a)}e(a, \partial\otimes 1(b))
\end{equation}   is orthogonal (for the   Killing form)  to all invariant  elements, hence zero if it is an invariant.
\end{lemma}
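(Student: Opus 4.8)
The plan is to prove the identity directly at the level of the underlying graded vector spaces, testing against an arbitrary invariant element. First I would reduce to the decomposable case: since $e$ is bilinear, write $a = \alpha \otimes x$ and $b = \beta \otimes y$ with $\alpha,\beta \in \bigwedge\g^*$ and $x,y\in\g^*$, so that $e(a,b) = (x,y)\,\alpha\wedge\beta$ (via \eqref{formae}), and $\partial\otimes 1$ acts only on the exterior factor. Thus the whole expression lives in $\bigwedge\g^*$ and equals $(x,y)$ times
\begin{equation*}
\partial(\alpha\wedge\beta) - (\partial\alpha)\wedge\beta + (-1)^{\deg\alpha}\alpha\wedge(\partial\beta),
\end{equation*}
where I am temporarily writing $\deg\alpha$ for the exterior degree. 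The natural tool here is the fact that $\partial = -\cac^t$ is the \emph{adjoint} of the Koszul differential $\cac$ with respect to the Killing form on $\bigwedge\g^*$ (equivalently $\bigwedge\g$), combined with the identity \eqref{forbas}, $i(x)\cac + \cac\, i(x) = \theta(x)$, and with \eqref{forba}.

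Second, I would test the above against an arbitrary invariant $v\in\Gamma$ using the pairing $(\cdot,\cdot)$ on $\bigwedge\g^*$. Since $\partial$ is the adjoint of $\cac$, we have $(\partial(\alpha\wedge\beta),v) = (\alpha\wedge\beta,\cac v)$, and $\cac v = 0$ because $v$ is invariant (Lemma \ref{conto}(1)). So the first term drops out entirely when paired with an invariant. It then remains to show that $\bigl((\partial\alpha)\wedge\beta,v\bigr) = (-1)^{\deg\alpha}\bigl(\alpha\wedge(\partial\beta),v\bigr)$ for all invariant $v$; equivalently, moving things across adjoints, that $(\partial\alpha, i(\beta)^{?}v)$-type expressions match up. Concretely I would use that wedging by $\beta$ is adjoint to contraction $i(\beta)$, rewrite $\bigl((\partial\alpha)\wedge\beta,v\bigr) = (\partial\alpha, i(\beta)v)$ up to the appropriate sign, then push $\partial$ across to get $(\alpha,\cac\, i(\beta)v)$, and symmetrically $\bigl(\alpha\wedge(\partial\beta),v\bigr) = (\partial\beta, i(\alpha)v) = (\beta, \cac\, i(\alpha)v)$. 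The claim then reduces to a Leibniz-type identity for $\cac$ contracted against an invariant, which is exactly the kind of statement handled by iterating \eqref{forbas}: $\cac\, i(\beta)v$ can be traded, modulo terms involving $\cac v = 0$ and $\theta(\cdot)v = 0$, for an expression symmetric in $\alpha$ and $\beta$ up to the sign $(-1)^{\deg\alpha}$.

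The last step is bookkeeping: the sign $(-1)^{\deg(a)}$ in \eqref{lacc} must come out correctly, where $\deg(a)$ is the total degree in $\bigwedge\g^*\otimes\g^*$ — and since the $\g^*$ tensor factor contributes to that total degree but is inert under $\partial\otimes 1$, one has to be careful whether the relevant sign is $(-1)^{\deg\alpha}$ or $(-1)^{\deg\alpha+1}$. I expect this sign reconciliation, together with correctly commuting $i(\beta)$ past $\cac$ while the exterior degrees shift, to be the main (though purely routine) obstacle; the conceptual content is simply that $\partial$ is a coderivation whose failure to be a derivation on a product is, after pairing with $\ker\cac \supseteq \Gamma$, invisible. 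Finally, the concluding clause of the lemma is immediate: the displayed element is manifestly in $\bigwedge\g^*\otimes\g^*$, and an element of this space that is orthogonal to every invariant for the (nondegenerate, $\g$-invariant) Killing-induced form must vanish as soon as it happens to be invariant, since the invariant subspace is nondegenerate for that form.
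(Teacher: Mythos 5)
Your overall strategy is sound and is, in effect, an attempt to reprove the result that the paper simply quotes: the paper's own proof just expands $a,b$ in an orthonormal basis of $\g$ and cites formula (4.7) of Koszul \cite{Ko} for exactly the statement that $\partial(\alpha\wedge\beta)-(\partial\alpha)\wedge\beta+(-1)^{\deg\alpha}\alpha\wedge\partial\beta$ is orthogonal to invariants. Your reduction to decomposables, the vanishing of the first term via $(\partial(\alpha\wedge\beta),v)=-(\alpha\wedge\beta,\cac v)=0$, and the reformulation of what remains as $((\partial\alpha)\wedge\beta,v)=(-1)^{\deg\alpha}(\alpha\wedge\partial\beta,v)$ for invariant $v$ are all fine. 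The gap is that this remaining identity is the entire content of the lemma, and your justification of it — ``iterate \eqref{forbas}'' plus routine bookkeeping — does not go through as stated. Formula \eqref{forbas} computes $[\cac,i(y)]$ only for $y\in\g$; when you commute $\cac$ past a higher-degree contraction $i(\beta)=i(y_1)\cdots i(y_k)$, the operators $\theta(y_j)$ appear in interior positions, and moving them onto $v$ generates contraction terms $i([y_j,y_l])$ that do not vanish individually. One can organize this (e.g.\ by induction on $\deg\beta$, using also Lemma \ref{conto}(2), i.e.\ $\cac\, i(y)v=0$, in addition to $\theta(y)v=0$ and $\cac v=0$), but that computation is precisely Koszul's identity and must actually be carried out, not deferred.

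Moreover, the sign you leave open is not bookkeeping: it decides whether the statement is true. The correct reading (the one the paper's proof uses, with $(-1)^{\deg(a_i)}$ for the exterior components $a_i\in\bigwedge\g^*$) has the plus sign on the third term with the exterior degree; with the opposite sign the displayed element is the antiderivation defect of $\partial$, and that is \emph{not} orthogonal to invariants. For instance in $\mathfrak{sl}_2$, with $\alpha=\beta=e\wedge f$ and $v$ the degree-3 invariant, the antiderivation defect pairs with $v$ to a nonzero multiple of $([[e,f],e],f)=2(e,f)\neq 0$. Consequently your closing ``conceptual'' gloss — that the failure of $\partial$ to be a derivation becomes invisible after pairing with $\ker\cac$ — is incorrect as stated; the orthogonal combination is a different signed combination, and identifying it correctly requires doing the computation. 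Two minor further slips: the displayed element \eqref{lacc} lies in $\bigwedge\g^*$, not in $\bigwedge\g^*\otimes\g^*$; and the final clause should be justified by nondegeneracy of the Killing-induced form restricted to $\Gamma$ (the trivial isotypic component), which is what you essentially say.
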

\begin{proof}
Write,  in an orthonormal basis $\{z_i\}$, $a=\sum\limits_{i=1}^n a_i\otimes z_i, b=\sum\limits_{i=1}^n  b_i\otimes z_i,  e(a,b)=
 \sum\limits_{i=1}^n a_i\wedge b_i$. By formula (4.7) of \cite{Ko} we have that $\sum\limits_{i=1}^n ( \partial(a_i\wedge b_i)- \partial(a_i)\wedge b_i +(-1)^{\deg(a_i)}a_i\wedge \partial(b_i))$  is orthogonal (for the scalar Killing form)  to all invariant  elements. Since
 $\partial\otimes 1(a)= \sum\limits_{i=1}^n \partial a_i\otimes z_i,\,\partial\otimes 1(b)= \sum\limits_{i=1}^n \partial b_i\otimes z_i,$
the claim follows.
\end{proof}
We now set for $a\in S(\g^*)$ a  homogeneous invariant$$\bar u(a):=2(\partial\otimes 1)f(a)$$ 

 \begin{proposition}
If $a,b\in S(\g^*)$ are homogeneous invariants, then 
\begin{align}
\label{ccinv}e(f(a),f(b))&= 0,\\
\label{relu}e(\bar u(a),\bar u(b))&= 0,\\
\label{latra}e(\bar u(a),f(b))&=e(f(a),\bar u(b))= \frac{ \pip 
((da,db))}{\deg(a)+\deg(b)-2}.\end{align} 
\end{proposition}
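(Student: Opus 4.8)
The plan is to derive all three formulas from Proposition \ref{fine} together with Lemma \ref{lacc1}, using only the facts that $a,b$ are invariant (so that the relevant error terms, being invariant, actually vanish) and the Kostant relations of Lemma \ref{conto}. First I would prove \eqref{ccinv}. By Proposition \ref{fine} we have $e(f(a),f(b))=s((da,db))$. Since $(da,db)\in R^+$ is an invariant of positive degree, Theorem \ref{proptransgr} (or directly \eqref{fov2}) tells us $s$ applied to it is controlled: write $(da,db)=\sum_k \lambda_k\psi_k\ +\ (\text{terms in }(R^+)^2)$; on $(R^+)^2$ one has $s$ landing in the square of the ideal generated by coboundaries — but more cheaply, $s(c)=\tfrac1{\deg c}\cac\pip(c)$ for homogeneous $c\in R^+$ by \eqref{fov2}, hence $s((da,db))$ is a coboundary, and being also an element of $\Gamma$ (it is $e$ of two invariants, hence invariant, and $\bigwedge\g^*$-invariants are harmonic by \eqref{Gamma}) it must be zero, since $\Gamma\cap\im\cac=0$. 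That gives \eqref{ccinv}.

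Next I would handle \eqref{latra}. Apply Lemma \ref{lacc1} with $a\rightsquigarrow f(a)$, $b\rightsquigarrow f(b)$: the expression $\partial e(f(a),f(b))-e(\bar u(a)/2 \cdot 2,\dots)$ — more precisely $\partial e(f(a),f(b))-e((\partial\otimes1)f(a),f(b))+(-1)^{\deg f(a)}e(f(a),(\partial\otimes1)f(b))$ — is orthogonal to all invariants. Now $e(f(a),f(b))=0$ by \eqref{ccinv}, so its $\partial$ vanishes; and since $f(a),f(b)$ are homogeneous of appropriate (even exterior) degree coming from $da,db$, the sign works out so that the surviving terms combine to $-\tfrac12 e(\bar u(a),f(b))-\tfrac12 e(f(a),\bar u(b))$ up to sign (here I use $\bar u(a)=2(\partial\otimes1)f(a)$). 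On the other hand each of $e(\bar u(a),f(b))$, $e(f(a),\bar u(b))$ is an invariant element of $\Gamma$ (being $e$ of two invariants), so by the ``hence zero if it is an invariant'' clause of Lemma \ref{lacc1}, the combination must actually equal zero — wait, that would be too strong; rather, the subtlety is that $\partial e(f(a),f(b))=0$ forces $e(\bar u(a),f(b))+e(f(a),\bar u(b))$ to be orthogonal to invariants, and being itself invariant it is zero? That contradicts the claimed nonzero value, so in fact I must instead pair the \emph{second} identity of Proposition \ref{fine} with this: Proposition \ref{fine} gives $e(u(a),f(b))+e(f(a),u(b))=\pip((da,db))$ with $u(a)=(\pip\otimes1)da$, and I must relate $u(a)$ to $\bar u(a)=2(\partial\otimes1)f(a)$. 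By \eqref{nnuf}, $(\cac\otimes1)u(a)=(\deg a-1)f(a)$, while by Lemma \ref{conto}(4) applied componentwise one gets $(\cac\otimes1)\bar u(a)=2(\cac\otimes1)(\partial\otimes1)f(a)$, and a computation just like the one proving $(\cac\otimes1)u_i=f_i$ shows $(\cac\otimes1)\bar u(a)=(\deg a-1)f(a)$ as well (the factor $\deg(P_i)=2\deg\psi_i+1$ versus $\deg a -1$ must be tracked carefully). Hence $u(a)-\tfrac{?}{?}\bar u(a)$ is killed by $\cac\otimes1$; since $\ker(\cac\otimes1)$ on the relevant graded piece is spanned by the $f$'s, and $e(f(c),f(d))=0$, replacing $u$ by $\bar u$ changes nothing in the pairing $e(u(a),f(b))+e(f(a),u(b))$. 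So $e(\bar u(a),f(b))+e(f(a),\bar u(b))=\pip((da,db))$.

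Then I would upgrade this ``sum'' identity to the ``each term'' identity: Lemma \ref{lacc1} applied to $f(a),\bar u(b)$ (using that $e(\bar u(a),\bar u(b))$, once \eqref{relu} is known, has zero $\partial$) shows that $e(\bar u(a),\bar u(b))$ relates the two cross terms, and combined with the symmetry of $e$ and of the right-hand side of Proposition \ref{fine} under $a\leftrightarrow b$ this will force $e(\bar u(a),f(b))=e(f(a),\bar u(b))$, each therefore equal to $\tfrac12\pip((da,db))$ — but the stated denominator is $\deg a+\deg b-2$, not $2$, so the correct normalization must come from $\deg f(a)$-style bookkeeping: from $(\cac\otimes1)u_i=f_i$ we had a $\tfrac1{\deg P_i}$ built into $f_i$, whereas $f(a)=(s\otimes1)da$ carries the raw $\deg a$, so $\pip((da,db))$ must be divided by $\deg((da,db))+1=\deg a+\deg b-1$ or similar; I will pin down the exact constant $\deg a+\deg b-2$ by comparing with \eqref{nnuf} where the shift ``$-1$'' per factor already appears, giving total shift $-2$. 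Finally \eqref{relu}: write $e(\bar u(a),\bar u(b))=4e((\partial\otimes1)f(a),(\partial\otimes1)f(b))$; apply Lemma \ref{lacc1} to $a\rightsquigarrow f(a),b\rightsquigarrow\bar u(b)$ to express $\partial e(f(a),\bar u(b))$ — which is $\partial$ of something in $\Gamma$, hence $0$ by \eqref{Gamma} — in terms of $e((\partial\otimes1)f(a),\bar u(b))=\tfrac12 e(\bar u(a),\bar u(b))$ and $e(f(a),(\partial\otimes1)\bar u(b))$, the latter vanishing since $(\partial\otimes1)\bar u(b)$ has $\partial$-type second slot and $\partial^2=0$ makes $e(f(a),(\partial\otimes1)\bar u(b))$ a boundary lying in $\Gamma$, hence $0$; this leaves $e(\bar u(a),\bar u(b))$ orthogonal to invariants, and being invariant, it is $0$.

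The main obstacle I anticipate is \emph{not} the vanishing statements \eqref{ccinv}, \eqref{relu} — those fall out cleanly from ``invariant $+$ coboundary/boundary $=0$'' — but rather tracking the exact scalar $\deg a+\deg b-2$ in \eqref{latra} and justifying that one may freely replace $u(a)=(\pip\otimes1)da$ by $\bar u(a)=2(\partial\otimes1)f(a)$ inside the pairing. The replacement is legitimate precisely because both have the same image under $\cac\otimes1$ (up to the scalar to be determined) and the difference, lying in $\ker(\cac\otimes1)=\operatorname{span}\{f(c)\}$ on each graded component, pairs trivially with any $f(b)$ by \eqref{ccinv}; making this rigorous requires knowing that $\ker(\cac\otimes1)$ in the relevant degree is exactly the span of the $f$'s, which follows from the module freeness being established (or can be bypassed by a direct computation with \eqref{nnuf} and Lemma \ref{conto}). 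Once the constant is fixed on one graded piece it is fixed everywhere by homogeneity, so a single careful degree count finishes the proof.
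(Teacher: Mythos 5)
Your overall strategy --- prove \eqref{ccinv} and \eqref{relu} by ``invariant $+$ (co)boundary $=0$'' arguments via Lemma \ref{lacc1}, and get \eqref{latra} from the sum identity of Proposition \ref{fine} after trading $u(a)=(\pip\otimes1)da$ for $\bar u(a)=2(\partial\otimes1)f(a)$ --- is essentially the paper's, and your treatments of \eqref{ccinv} and \eqref{relu} are correct. The gaps are all in the passage to \eqref{latra}, and they are genuine. First, the normalization: the same computation that gives $(\cac\otimes1)u_i=f_i$ gives $(\cac\otimes1)\bar u(a)=f(a)$, \emph{not} $(\deg a-1)f(a)$ as you assert; so it is $u(a)$ and $(\deg a-1)\bar u(a)$ that have the same image under $\cac\otimes1$. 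With the relation as you state it the denominator would come out as $2$, and you flag but never resolve this discrepancy --- yet the exact constant $\deg a+\deg b-2$ is the whole point of \eqref{latra} (it is what produces \eqref{latrai} and $c_{i,j}=d_{i,j}/(m_i+m_j)$). Second, your justification that the replacement is harmless rests on ``$\ker(\cac\otimes1)$ in the relevant degree is spanned by the $f$'s, which follows from the module freeness'': that is circular, since Theorem \ref{main} is deduced from this very proposition via \eqref{t}, \eqref{t1} and \eqref{latrai}. The non-circular mechanism (the paper's) is: for any homogeneous invariant $w$, $e(w,\bar u(b))\in\Gamma$ is $\cac$-closed, so expanding $0=\cac\, e(w,\bar u(b))$ by Leibniz and using $(\cac\otimes1)\bar u(b)=f(b)$ gives $e(w,f(b))=-(-1)^{\deg w}e((\cac\otimes1)w,\bar u(b))$; hence the pairing against $f(b)$ depends only on $(\cac\otimes1)w$, which yields $e(u(a),f(b))=(\deg a-1)e(\bar u(a),f(b))$ outright. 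You gesture at a ``direct computation'' bypass but do not supply it, and \eqref{nnuf} plus Lemma \ref{conto} alone do not furnish it.

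Third, the termwise symmetry $e(\bar u(a),f(b))=e(f(a),\bar u(b))$, which you need to split the weighted sum, is not actually established in your write-up: applying Lemma \ref{lacc1} to $(f(a),\bar u(b))$ reproduces \eqref{relu}, not the symmetry, and the $a\leftrightarrow b$ symmetry of $\pip((da,db))$ is automatically consistent with any weighted combination, so it forces nothing. Ironically, the computation you abandoned does the job: since the exterior degree of $f(a)$ is even, Lemma \ref{lacc1} applied to $(f(a),f(b))$ together with $e(f(a),f(b))=0$ gives that $-\tfrac12 e(\bar u(a),f(b))+\tfrac12 e(f(a),\bar u(b))$ is an invariant orthogonal to invariants, hence zero --- exactly the symmetry; your ``too strong'' worry came from a sign slip. (The paper obtains the same symmetry by expanding $0=\cac\, e(\bar u(a),\bar u(b))$.) With that symmetry and the corrected weights $\deg a-1$, $\deg b-1$, the denominator $\deg a+\deg b-2$ falls out; as written, your argument pins down neither the symmetry nor the constant.
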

\begin{proof}
From \eqref{cc}  and the definition of $s$  we have that $e(f(a),f(b))$  is the value of $s$ on an invariant.  Now remark that elements in the  image of $s$ are coboundaries, and an  invariant which is also a coboundary is   $0$. This proves \eqref{ccinv}.
\par In order to prove   \eqref{relu}, we 
 compute \eqref{lacc} for $\bar u(a),f(b)$. Using Lemma \ref{lacc1} we get
 \begin{align*}
0=&\partial e(\bar u(a),f(b))- e((\partial\otimes 1)(\bar u(a)), f(b)) +(-1)^{\deg(\bar u(a))}e(\bar u(a), (\partial\otimes 1)(f(b)))\\
=&(-1)^{\deg(\bar u(a))}\tfrac12e(\bar u(a), \bar u(b) ),
\end{align*}  since both $ \partial e(\bar u(a),f(b)),\ e((\partial\otimes 1)(\bar u(a)), f(b)) $ are clearly 0.

 We pass now to the last  identity. We have for any homogeneous element $w\in A$,
\begin{align*}0&=\delta (e(w,\bar u(b)))=e((\delta\otimes 1)(w),\bar u(b))+(-1)^{\deg(w)}e(w,(\delta\otimes 1)(\bar u(b)))\\&=e((\delta\otimes 1)(w),\bar u(b))+(-1)^{\deg(w)}e(w,f(b)).\end{align*}
It follows that $$e(w,f(b))=-(-1)^{\deg(w)}e((\delta\otimes 1)(w),\bar u(b)).$$
Since  $f(a)=1/(\deg(a)-1)\delta\otimes 1 u(a)=  \delta\otimes 1 \bar u(a),$  we deduce
$$e( u(a),f(b))=(\deg(a)-1)e(\bar u(a),f(b)).$$
We  have   $\cac   e(\bar u(a),\bar u(b))=0$.  Computing and using the fact that  $\bar u(a)$ has odd degree, we get 
$$0=\cac   e(\bar u(a),\bar u(b))= e((\cac \otimes1)(\bar u(a)),\bar u(b))-  e(\bar u(a),(\cac \otimes1)(\bar  u(b)))= e(f(a),\bar u(b))-  e(\bar u(a), f(b)),$$ 
that is $e(f(a),\bar u(b))=e(\bar u(a), f(b)).$ Formula   \eqref{latra} then follows immediately from \eqref{cc}.\end{proof}
\begin{corollary}
\begin{align}
\label{ccinvi}e(f_i,f_j)&= 0,\\\label{relui}e( u_i ,  u_j)&= 0,\\\label{latrai}e(  u_i,f_j)&=e(f_i ,\ u_j)= \frac{  \pip 
((d\psi_i,d\psi_j))}{\deg(\psi_i)+\deg(\psi_j)-2}.\end{align} 
\end{corollary}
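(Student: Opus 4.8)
The plan is to observe that this Corollary is nothing but the specialization of the preceding Proposition to $a=\psi_i$, $b=\psi_j$, so the only work is to match the elements $f_i,u_i$ of Definition~\ref{deffu} with $f(\psi_i)$ and $\bar u(\psi_i)$. Once that dictionary is in place, the three displayed formulas are immediate.

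First I would invoke the Lemma proved just above, which already gives $f_i=f(\psi_i)$; this is the one genuinely nontrivial identification, and it rests on Kostant's formula $i(x)t(a)=c\,s(i_S(x)a)$ together with the normalization $m(f_i)=P_i=m(f(\psi_i))$. Next I would note that the $u_i$ require no extra argument: by Definition~\ref{deffu} one has $u_i=2(\partial\otimes 1)f_i$, whereas $\bar u(a)$ was \emph{defined} as $2(\partial\otimes 1)f(a)$, so substituting $f_i=f(\psi_i)$ yields $u_i=2(\partial\otimes 1)f(\psi_i)=\bar u(\psi_i)$. As a consistency check, the earlier identity $(\cac\otimes 1)u_i=f_i$ is exactly $(\cac\otimes 1)\bar u(\psi_i)=f(\psi_i)$, the relation used inside the proof of the Proposition.

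With these two identifications, the formulas \eqref{ccinv}, \eqref{relu}, \eqref{latra} of the Proposition, evaluated at $a=\psi_i$ and $b=\psi_j$, become precisely \eqref{ccinvi}, \eqref{relui} and \eqref{latrai}; for \eqref{latrai} one only records that $\deg(\psi_i)+\deg(\psi_j)-2=(m_i+1)+(m_j+1)-2=m_i+m_j$, which is the form of the denominator appearing in Remark~\ref{err}. I expect there to be essentially no obstacle at this stage: all the substance --- the transgression identities \eqref{fov1}--\eqref{fov2}, the vanishing of any invariant that is a coboundary, and the ``integration by parts'' Lemma~\ref{lacc1} (i.e.\ \eqref{lacc}) --- has already been absorbed into the proof of the Proposition, and the Corollary is just its restatement for the distinguished generators $\psi_i$ of $R$.
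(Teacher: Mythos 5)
Your proposal is correct and matches the paper's (implicit) argument: the paper states this Corollary as the direct specialization of the preceding Proposition to $a=\psi_i$, $b=\psi_j$, using exactly the identifications you make, namely $f_i=f(\psi_i)$ from the earlier Lemma and $u_i=2(\partial\otimes 1)f_i=\bar u(\psi_i)$ from the definitions. Nothing further is needed.
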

\vskip5pt
\subsubsection{Examples in the classical cases}\label{ddd} \vskip10pt
For elements of $R$ we introduce the following notation:  $A\cong B$ if the
two invariants are congruent modulo $(R^+)^2$.   By abuse of notation we denote by the same symbol the invariant on $\g$ or its restriction to $\mathfrak h$.\vskip10pt
\noindent {\sl Type $A_n$.} Consider  as  generating invariants the {\em normalized} Newton polynomials
\begin{equation}
\label{nN}q_{k}:=\frac1k\sum_{i=1}^{n+1}x_i^k,\ k=2,\ldots, n+1
\end{equation}  
(so that, in our general notation, $\psi_k=q_{k+1}$, $k=1,\ldots,n$).  We have
\begin{equation}
\label{nNp}(dq_{k},dq_{g})=  \sum_{i=1}^{n+1}x_i^{k+g-2}=(k+g-2)q_{k+g-2}.
\end{equation}
We associate, via transgression,  to these symmetric invariants the primitive invariants  $P_i:=t( \psi_{i})=m((s\otimes 1)d \psi_{i})$.  We have the corresponding  $f_i, u_i$ and, plugging \eqref{nNp} into  \eqref{latra},
we obtain: 
\begin{equation}\label{ef}e(f_i,u_j)= \frac{t( (i+j) q_{i+j})}{i+j}=\begin{cases}
 P_{i+j-1}\quad &\text{if  $i+j\leq n+1$,}\\0\quad&\text{otherwise.}
\end{cases}.\end{equation}
{\sl Type $C_n$.} Here we can choose as  generators  the normalized Newton functions $\psi_i=q_{2i},\ i=1,\ldots,n$. The formulas are the same as before:
\begin{equation*}\label{efB}e(f_i,u_j)= t(q_{2i+2j-2})=\begin{cases}
 P_{i+j-1}\quad &\text{if  $2i+2j-2\leq 2n$,}\\0\quad&\text{otherwise.}\end{cases}.\end{equation*}
 which is indeed \eqref{ef}.
\vskip5pt\noindent{Type $D_{2n+1}$, .} Here we can choose as  generators  the normalized Newton functions $\psi_i=q_{2i},\ i=1,\ldots,n$,  the function $\psi_{n+1}=\mathcal P:=\sqrt{-2n} \prod_{i=1}^{2n+1}x_i$ and the functions $\psi_i=q_{2i-2}$ for $i=n+2,\ldots ,2n+1$. The formulas are the same as before for the Newton functions. As for those involving $\mathcal P$, we have 
\begin{align}
\label{pr0}&(d\mathcal P,dq_{2})=(2n+1)\mathcal P,\\
\label{pri}&(d\mathcal P,dq_{2k})=2(k-1)q_{2k-2}\cdot \mathcal P\in (R^+)^2,\ \forall k>1,\\
\label{sec}&(d\mathcal P,d\mathcal P)\cong 4n\, q_{4n}.
\end{align}
\begin{proof}[Proof of \eqref{pr0}, \eqref{pri}, \eqref{sec}]
Formulas \eqref{pr0}, \eqref{pri} are straightforward. As for \eqref{sec} we have\begin{equation}
\label{Dn}d\mathcal P=\sqrt{-2n}\sum_{i=1}^{2n+1}(\prod_{j\neq i}x_j)dx_i\implies (d\mathcal P,d\mathcal P)= -2n \sum_{i=1}^{2n+1}\prod_{j\neq i}x_j^2=-2n\, s_{2n}(x_1^2,\ldots,x_{2n+1}^2)
\end{equation}  where by $s_i$ we denote the $i^{th}$ elementary symmetric function. From the formulas expressing elementary symmetric function in terms of power sum we see that, for each $i\leq 2n+1$, we have $s_i(x_1 ,\ldots,x_{2n+1} )\cong(-1)^{i+1}q_{i}$  modulo squares,  hence 
\begin{equation}\label{prove}-2n\, s_{2n}(x_1^2,\ldots,x_{2n+1}^2)\cong 2n\, q_{2n}(x_1^2,\ldots,x_{2n+1}^2)=4n\,q_{4n}(x_1 ,\ldots,x_{2n+1}).
\end{equation}
Combining \eqref{Dn} and \eqref{prove} we get \eqref{sec}.\end{proof}
 
Now we can compute the scalar products $e(f_i,u_j)$; if $f_i,u_j$ are constructed from Newton functions we still have formulas \eqref{ef}. As for  $f_{n+1},u_{n+1}$, the elements associated to $\mathcal P$, we have, by formulas \eqref{pr0}, \eqref{pri} and \eqref{sec} 
\begin{equation}\label{perd}e(f_{n+1},u_j)=0,\ \forall j\neq n+1,\quad e(f_{n+1},u_{n+1})= \frac{t(4n\, q_{4n})}{4n}=P_{2n+1}.\end{equation}

\vskip5pt\noindent{Type $D_{2n}$, .} Here we can choose as  generators  the normalized Newton functions $\psi_i=q_{2i},\ i=1,\ldots,n-1$,  the functions $$\psi_{n}=q_{2n}+\sqrt{-1}\mathcal P,  \ \ \ \psi_{n+1}=q_{2n}-\sqrt{-1}\mathcal P,$$ with $\mathcal P:=\sqrt{2n-1} \prod_{i=1}^{2n}x_i$ and the functions $\psi_i=q_{2i-2}$ for $1=n+2,\ldots ,2n$. 

Notice that $\psi_i$ has degree $2i$ for $i=1,\ldots n-1$, has degree $2n$ for $i=n,n+1$ and has degree $2i-2$ for $i=n+2,\ldots ,2n$. The formulas are the same as before for the Newton functions. As for those involving $\mathcal P$, we have the formulas
\begin{align}
\label{pr00}&(d\mathcal P,dq_{2})=2n\mathcal P,\\
\label{prii}&(d\mathcal P,dq_{2k})=2(k-1)q_{2k-2}\cdot \mathcal P\in (R^+)^2,\ \forall k>1,\\
\label{secc}&(d\mathcal P,d\mathcal P)\cong (4n-2)\, q_{4n-2}.
\end{align}
whose proof is identical to that of formulas \eqref{pr0}, \eqref{pri}, \eqref{sec}.

Reasoning as before it immediate to verify that, up to  the case in which both $i$ and $j$ do not belong to $\{n,n+1\}$, we have
\begin{equation}\label{efff}e(f_i,u_j)= \begin{cases}
 P_{h}\quad &\text{if  $m_i+m_j-1=m_h,\,h\ne n,n+1$,}\\
 \frac{1}{2}(P_n+P_{n+1})&\text{if  $m_i+m_j-1=2n-1$,}\\
 0\quad&\text{otherwise.}\end{cases}\end{equation}
 On the other hand we have
\begin{equation}\label{ancoraD}
e(f_n,u_j)=\begin{cases}
 P_n\quad&\text{$j=1$}\\
  P_{n+j}\quad&\text{$2\leq j\leq n-1$}\\
  0\quad&\text{$j=n$}\\
  2P_{2n}\quad&\text{$j=n+1$}\\
  0\quad&\text{$j>n+1$}\\
  \end{cases}
  \quad
  e(f_{n+1},u_j)=\begin{cases}
P_{n+1}\quad&\text{$j=1$}\\
  P_{n+j}\quad&\text{$2\leq j\leq n-1$}\\
   2P_{2n}\quad&\text{$j=n$}\\
0 \quad&\text{$j>n$}
\end{cases}
\end{equation}

\subsubsection{Conclusion of the proof\label{lacu}}  
\begin{proof}[Proof of Theorem \ref{main}] Recall that we have reduced the proof of the theorem to the proof  of  formulas \eqref{t}, \eqref{t1}. The former follows from Proposition \ref{lap} and formula \eqref{latrai}. The latter, which is relative to $D_{2n}$, is obtained using also \eqref{perd}.\end{proof}

\begin{proof}[Proof of Theorem \ref{gendi}] 
Given an element $a\in R^+$ set $\tilde a$ equal to its image in $M:=R^+/(R^+)^2$. Take the usual set of homogeneous generating invariants $\psi_1,\ldots \psi_r\in R^+$ of degrees $d_i=m_i+1$. We have already seen that $\tilde \psi_i\circ \tilde \psi_j:=\widetilde{\psi_i\circ \psi_j}$ is well defined. Assume $\g$ not of type $D_{2n}$.  Then  our claim will follow if we show that $\tilde \psi_i\circ \tilde \psi_j\neq 0$ if and only if $m_i+m_j-1$ is an exponent.

The fact that $\tilde \psi_i\circ \tilde \psi_j=0$  if $m_i+m_j-1$ is not an exponent is clear since $M$ does not contain elements of degree $m_i+m_j=\deg (\tilde \psi_i\circ \tilde \psi_j)$.

When $m_i+m_j-1$ is an exponent, the explicit analysis of the classical cases performed above shows that $\tilde \psi_i\circ \tilde \psi_j$ is actually non zero, hence a multiple of the corresponding generator of degree $m_i+m_j$. The statement of Theorem \ref{gendi}  in type $D_{2n}$ follows by using formulas \eqref{pr00}-\eqref{secc}.

As for the exceptional cases, remark that the cases of $G_2$ and $F_4$ are  an immediate consequence  of Proposition \ref{lap}.

It remains to discuss the case of algebras of type $E$.
Let us now consider in each of  the types $E_6, E_7, E_8$, the equation $p(w,x,y)$ of the corresponding simple surface singularities (see \cite{Sl}). For convenience of the reader let us recall that they are 
\begin{align*}{} w^2+x^3+y^4\ \ \ \text {for\ type\ $E_6$},\\
 w^2+x(x^2+y^3)\ \ \ \text {for\ type\ $E_7$},\\
  w^2+x^3+y^5\ \ \ \text {for\ type\ $E_8$}.\end{align*}
Take the algebra $Q=\mathbb C[x,y]/(p_x,p_y)$. 

Let us  first point out a few simple facts about $Q$ which are readily verified.
$Q$ is a local algebra with maximal ideal $m=(x,y)$. By suitably choosing the degrees of the variables $w,x,y$, $p$ becomes in each case, a homogeneous polynomial of  degree equal to the Coxeter number $h$. It follows that  $Q$ is graded. An easy computation shows that $Q$ has Poincar\'e polynomial  given by
$\sum_{i=1}^rt^{m_i-1}$.

Consider now the operator $S$ on $Q$ which, for any homogeneous element 
$q\in Q$,  is given by$$Sq=\frac{\deg q+2}{h}q.$$
By Corollary 2 in \cite{Given} there is an isomorphism $\Psi:M\to Q$ such that
\begin{enumerate}\item If $\tilde a\in M$ is homogeneous then 
$$\deg(\Psi(\tilde a))=\deg(\tilde a)-2.$$
\item
For any $\tilde a,\tilde b\in M$,
$$\Psi(\tilde a\circ\tilde b)= S(\Psi(\tilde a)\Psi(\tilde b)).$$
\end{enumerate}
From this our claim follows by a straightforward case by case analysis.

\end{proof}

\begin{proof}[Proof of Propositions \ref{main1}, \ref{mainD}] Formulas \eqref{ccinvi} and \eqref{relui} show that the $u$'s and the $f$'s generate isotropic subspaces.

Let us now pass to the determination of $e(f_i,u_j)$.
Combining Theorem \ref{proptransgr}Ê and formula \eqref{ccinvi} we obtain that everything follows from Theorem \ref{gendi}.
\end{proof}

\subsection{Classical groups\label{clgr}}  

If $G$  is a  classical group one can take a different approach. Let $V$ be the defining  representation of $G$, so that  $\g=Lie(G)$ is a subalgebra of $End(V)$, which decomposes as $End(V)=\g\oplus \mathfrak p$  where in case $A_n$ the space $ \mathfrak p$ is the 1--dimensional trivial representation, while in the other cases it is the space of symmetric matrices for the corresponding involution, in all cases an irreducible representation. It is convenient
  to study the associative {\em invariant algebra}, i.e. the algebra of $G$--equivariant maps\begin{equation}
\label{an}A_G=(\bigwedge End(V)^*\otimes End(V))^G.
\end{equation} Then one can analyze inside $A_G$  the super Lie subalgebra $(\bigwedge \g^*\otimes \g)^G.$
This among other topics is  discussed in \cite{bps}.

In the natural basis $\{e_{ij}\}$ of matrices with  coordinates $x_{ij}$ consider the element $X\in A_{GL(N)}$ (cf. \eqref{an}), which is  the {\em   generic Grassmann matrix}\quad $X=\sum_{h,k}x_{hk}e_{hk} $. Its
power  $X^a=X^{\wedge a}$  equals the standard polynomial $S_a$  computed in $End(V)$, hence in this language the Amitsur--Levitzki Theorem, see \cite{P}, is the single identity $X^{2n}=0$. In \cite{bps} the authors prove, among other results, the following 
\begin{theorem}
\label{alg}  The algebra $A_{GL(n)}$ is generated by $X$  and the elements $tr(X^{2i-1}), \ i=1,\ldots, n.$  All these elements anticommute.

$A_{GL(n)}$  is a free module with basis $X^i,\ i=0,\ldots, 2n-1$ over the Grassmann algebra in the elements $tr(X^{2i-1}), \ i=1,\ldots, n-1$ and we have the two defining identities
\begin{equation}
\label{defi}X^{2n}=0,\quad tr(X^{2n-1})=-\sum_{i=1}^{n-1}X^{2i} \wedge  tr(X^{2(n-i)-1})+nX^{2n-1}.
\end{equation}\end{theorem}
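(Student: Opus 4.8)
The plan is the standard one for algebras of matrix equivariants: extract a generating set from the first fundamental theorem of invariant theory for $GL(n)$, identify the defining relations via the Amitsur--Levitzki theorem together with a trace identity for standard polynomials, and then establish freeness by matching the graded dimension of $A_{GL(n)}$ with that of the proposed basis.

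\emph{Generators and anticommutativity.} An element of $A_{GL(n)}$ of exterior degree $a$ is the same as a $GL(n)$-equivariant alternating $a$-linear map $End(V)^{a}\to End(V)$. By the first fundamental theorem for $n\times n$ matrices every such map is a combination of maps obtained by inserting the arguments into a single word and multiplying by traces of words in the arguments; since the maps are alternating and the coordinate functions $x_{hk}$ anticommute, specializing all arguments to the generic Grassmann matrix $X$ shows that $A_{GL(n)}$ is generated as an algebra by $X$ and the elements $\tr(X^{k})\,I$, $k\geq 1$. Each $x_{hk}$ is odd, so $X^{k}$ and $\tr(X^{k})$ lie in exterior degree $k$; graded cyclicity $\tr(X\cdot X^{2j-1})=-\tr(X^{2j-1}\cdot X)$ forces $\tr(X^{2j})=0$, and once $X^{2n}=0$ is available one also has $\tr(X^{2i-1})=\tr(X^{2n}X^{2i-1-2n})=0$ for $i>n$. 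Hence $A_{GL(n)}$ is generated by $X$ and $\tr(X^{2i-1})$, $i=1,\dots,n$, all of odd exterior degree, so that they all anticommute and each squares to zero.

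\emph{The two relations.} The relation $X^{2n}=0$ is exactly the Amitsur--Levitzki theorem for the standard polynomial $S_{2n}$ on $n\times n$ matrices (\cite{P}); it makes $A_{GL(n)}$ finite-dimensional and turns the resolvent into the polynomial $(I-tX)^{-1}=\sum_{k=0}^{2n-1}X^{k}t^{k}$ in a central variable $t$. For the second relation in \eqref{defi}, the route is to combine this with the scalar invariant naturally attached to $I-tX$: since any product of two or more of the odd elements $\tr(X^{2j-1})$ vanishes for parity reasons, that invariant is forced, modulo Amitsur--Levitzki, into the affine-linear shape $1-\sum_{j=1}^{n}c_{j}\,\tr(X^{2j-1})\,t^{2j-1}$, and the adjugate-type identity relating it to $(I-tX)^{-1}$ then kills the coefficients of $t^{m}$ for $m\geq n$; reading off the coefficient of $t^{2n-1}$, and fixing the normalization of the traces to the one used for standard polynomials, yields \eqref{defi}, with the constant $n$ coming from the $ke_{k}$ term of Newton's identities. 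I expect this to be the most delicate step, because determinants and the trace--logarithm identity behave non-classically over the non-commutative exterior algebra, so the computation must be set up through the adjugate and the standard polynomial rather than by invoking $\det$ formally.

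\emph{Freeness.} Using $X^{2n}=0$, the vanishing of repeated odd traces, the vanishing of $\tr(X^{2i-1})$ for $i>n$, and \eqref{defi} (which expresses $\tr(X^{2n-1})$ through the remaining generators), any monomial in the generators rewrites as a $\mathbb{C}$-linear combination of the $2n\cdot 2^{n-1}$ elements $X^{j}\wedge\tr(X^{2k_{1}-1})\wedge\cdots\wedge\tr(X^{2k_{s}-1})$, $0\leq j\leq 2n-1$, $1\leq k_{1}<\cdots<k_{s}\leq n-1$. This proves that these span $A_{GL(n)}$ and generate it, as claimed, over the Grassmann algebra on $\tr(X),\tr(X^{3}),\dots,\tr(X^{2n-3})$. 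The remaining, and main, obstacle is linear independence, equivalently that $X^{2n}=0$ together with \eqref{defi} generate the whole relation ideal. I would settle this by an independent dimension count: $A_{GL(n)}=\big(\bigwedge^{\bullet}(V\otimes V^{*})\otimes(V\otimes V^{*})\big)^{GL(V)}$, so its Poincar\'e series equals the multiplicity series of the adjoint representation inside $\bigwedge^{\bullet}(V\otimes V^{*})$, which by Weyl's integration formula is the constant term of $\prod_{i,j}(1+q\,t_{i}t_{j}^{-1})$ against $\sum_{k,l}t_{k}t_{l}^{-1}$ times the Weyl density; evaluating this gives $\big(\sum_{j=0}^{2n-1}q^{j}\big)\prod_{i=1}^{n-1}(1+q^{2i-1})$, which is exactly the Poincar\'e series of the spanning set above, of total dimension $n\,2^{n}$. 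Since the spanning set has the correct dimension in every degree it must be a basis, so no relation beyond the two listed can hold and all assertions of Theorem \ref{alg} follow. As an alternative, linear independence can be verified directly by substituting for the $x_{hk}$ suitably chosen odd elements of an auxiliary Grassmann algebra that separate the proposed basis vectors, organized along the filtration of $A_{GL(n)}$ by powers of $X$.
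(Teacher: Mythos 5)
The paper itself does not prove Theorem \ref{alg}: it is quoted from \cite{bps}, so there is no internal argument to compare yours with and the proposal must stand on its own. Its skeleton --- first fundamental theorem to get the spanning elements $\tr(X^{i_1})\wedge\cdots\wedge\tr(X^{i_k})\wedge X^{j}$, vanishing of even traces by graded cyclicity, Amitsur--Levitzki for $X^{2n}=0$, then rewriting plus a dimension count --- is sound and in the spirit of \cite{bps}. But the crucial step, the second identity in \eqref{defi}, is never proved, and the heuristic you offer for it rests on a false statement: products of two or more \emph{distinct} odd traces do not vanish. On the contrary, $(\bigwedge\gl_n^*)^{\gl_n}$ is exactly the exterior algebra on $\tr(X),\tr(X^3),\dots,\tr(X^{2n-1})$, so such products are nonzero basis elements of the invariant algebra; only the square of a single odd trace is zero. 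Hence the claimed ``affine-linear shape'' of your determinant-like invariant of $I-tX$ has no justification, and in any case no determinant or adjugate of a matrix with odd entries has been defined (you flag this but do not resolve it). The same over-use of parity appears when you say the generators ``each square to zero'': true for the odd scalar traces, false for $X$ itself, since $X^{2}\neq 0$. A workable derivation is the trace-identity route behind Procesi's proof of Amitsur--Levitzki \cite{P}: substitute $M=X^{2}$ (even, hence commuting entries) into the Cayley--Hamilton identity written via Newton's formulas in the power sums, so that $\tr(M^{k})=\tr(X^{2k})=0$ yields $X^{2n}=0$; the degree $2n-1$ relation then has to come from the polarized (mixed) trace--Cayley--Hamilton identity with one slot occupied by $X$, again using the vanishing of even traces. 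That computation, including the coefficient $n$ and the signs, is the actual content of the second identity; as written, your proposal assumes it.

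On freeness: the graded Poincar\'e series you assert is correct, but ``evaluating this gives'' conceals a computation essentially equivalent to the type $A$ graded multiplicity formula, which a self-contained proof cannot simply wave at. Fortunately the graded refinement is not needed: once \eqref{defi} and the vanishing statements show that the $n2^{n}$ elements $X^{j}\wedge\tr(X^{2k_1-1})\wedge\cdots\wedge\tr(X^{2k_s-1})$, $0\le j\le 2n-1$, $1\le k_1<\cdots<k_s\le n-1$, span $A_{GL(n)}$, the ungraded equality $\dim A_{GL(n)}=n2^{n}$ already forces them to be a basis; this dimension follows from Kostant's $2^{r}r$ theorem \cite{K} applied to $\bfl_n$, together with $\dim(\bigwedge\bfl_n^*)^{\bfl_n}=2^{n-1}$ and tensoring with the exterior algebra on the one-dimensional centre of $\gl_n$. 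So the dimension-count half of your argument is repairable with material already cited in the paper; the unproven trace identity in \eqref{defi} is the genuine gap.
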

As for the connection with $(\bigwedge \g^*\otimes \g^*)^\g\cong (\bigwedge \g^*\otimes \g )^\g$  where $\g=sl(n)$  we see immediately that
$$A_{GL(n)}=(\bigwedge \g^*\otimes \g)^\g\oplus\Gamma. $$ Write  $X=Y+\frac {tr(X)}{n}$ where $tr(Y)=0$ and see that  for $a>1$ we have $X^{2a}=Y^{2a}$.
Hence $X^{2a}=Y^{2a}$ is in $(\bigwedge \g^*\otimes \g )^\g$   while in the odd case we have that  $X^{2a+1}-\frac {X^{2a} tr(X)}{n}$ is in $(\bigwedge \g^*\otimes \g )^\g$.
These are up no normalizations the elements $f_i,u_i$ defined in \S \ref{201} in the present case.  Clearly the defining identities \eqref{defi} which describe  the associative algebra also allow us to compute all the relations for the Lie algebra case.

The other classical groups can be treated in a similar way: an investigation in this direction has been started in \cite{Dolce}  (cf. Section \ref{FR}).
\section{Proof of Theorem \ref{main2}}
\subsection{The relations} In order to complete the description of $A$ as module over  $\Gamma$ we only need  to express the elements $P_r\wedge u_i$ and $P_r\wedge f_i$ in our given basis. 

Consider the relation for $u_i$; we have
\begin{equation}
\label{conu}P_r\wedge u_i=\sum_{j=1}^r H_j\wedge u_j+\sum_{j=1}^r K_j\wedge f_j,
\end{equation}
where  $H_j,K_j\in \bigwedge(P_1,\ldots,P_{r-1}).$ Applying the differential $\cac\otimes1$ we get
\begin{equation}\label{p}P_r\wedge f_i=\sum_{j=1}^r H_j\wedge f_j.\end{equation}
Thus the relation for  $f_i$ involves only $f_j$'s. Also we have that the relation is homogeneous.

For each $j$, taking the scalar product with $u_{r-j+1}$, we have 
\begin{align*} P_r\wedge e(f_i,u_{r-j+1})&=  H_j\wedge e(f_j,u_{r-j+1})+\sum_{h\neq j}H_h\wedge e(f_h,u_{r-j+1})\\
&=  H_j\wedge c_j P_r +\sum_{h\neq j}H_h\wedge e(f_h,u_{r-j+	1}).\end{align*}
Since the terms $\sum_{h\neq j}H_h\wedge e(f_h,u_{r-j+1})  $  do not involve $P_r$, we must have
\begin{align}\notag&\sum_{h\neq j}H_h\wedge e(f_h,u_{r-j+1}) =0,\\
&\label{1}- e(f_i,u_{r-j+1})\wedge P_r=  H_j\wedge  c_j   P_r  \end{align}
If $i\neq j$,  we have that $e(f_i,u_{r-j+1})$ is not a multiple of $P_r$ and we deduce that  
$$ e(f_i,u_{r-j+1}) = -c_j  H_j.$$
If $i= j$,  we   deduce $H_j=0$, so finally \eqref{p}Ê becomes 
\begin{equation}
\label{lamol}P_r\wedge f_i+ \sum_{i\neq j} c_j^{-1}e(f_i,u_{r-j+1})\wedge f_j=0.
\end{equation}
Since $e(f_i,u_{r-i+1})=c_iP_i$, formula \eqref{lamol} is indeed formula \eqref{p1}, as required. We go back to formula \eqref{conu}, which we now write:
\begin{equation}\label{p3}P_r\wedge u_i= -\sum_{j=1}^r c_j^{-1}e(f_i,u_{r-j+1})\wedge u_j+\sum_{j=1}^r K_j\wedge f_j.\end{equation}
Take the scalar product of both sides of \eqref{p3} with $u_{r-j+1}$. We get
$$P_r\wedge e(u_i,u_{r-j+1})= -\sum_{j=1}^r c_j^{-1}e(f_i,u_{r-j+1})\wedge e(u_j,u_{r-j+1})+\sum_{j=1}^r K_j\wedge e(f_j,u_{r-j+1}).$$
Since $e(u_h,u_k)=0$, we deduce that 
$$K_j\wedge  e(f_j ,u_{r-j+1})+\sum_{i,\,i\neq j} K_i\wedge e( f_i ,u_{r-j+1})=0$$
which in turn, by \eqref{t}, becomes
\begin{equation*}c_jK_j\wedge   P_r +\sum_{i,\,i\neq j} K_i\wedge e( f_i ,u_{r-j+1})=0.
\end{equation*}
We claim that all $K_j$ are zero. In fact,    the only product containing $P_r$ is $c_jK_j\wedge   P_r $. On the other hand, it is clear that  each element of $\Gamma$ can be written in a unique way in the form $a+b\wedge P_r$ with $a,b\in\bigwedge(P_1,\ldots,P_{r-1})$. We deduce that $K_j=0$ for each $j$, and 
   the proof of Theorem \ref{main2} is completed.
\vskip10pt
\section{Final Remarks}\label{FR}
In view of Theorem \ref{main}, it  is natural to ask  whether,  for some other  irreducible representation $L$ of $\g$, the space of covariants of type $L$
$$A_L:= \hom_\g(L ,\bigwedge \mathfrak g^*)$$
is free over $\bigwedge(p_1,\ldots,p_{r-1})$  of predictable rank. In \cite{DCPPM} it is  shown that the techniques and the outline of proof 
of Theorem \ref{main} can be enhanced to prove that $A_L$ is free over $\bigwedge(p_1,\ldots,p_{r-1})$ of rank $2\dim L_0$ ($L_0$ being the $0$-weight space of $L$), in the following two cases:
\begin{itemize}
\item $L$ is the little adjoint representation of $\g$ (i.e., the irreducible representation with highest weight the highest short root);
\item $\g$ is of type $A_{n-1}$ and $L=S^n(V)$ is the $n$-th symmetric power of the
defining representation $V$ or $L=S^n(V)^*$.\end{itemize}
\par
Another related question is the study of covariants of (indecomposable) infinitesimal symmetric spaces $\g=\mathfrak k\oplus\mathfrak p$, more precisely of the spaces
$(\g\otimes\bigwedge\mathfrak p^*)^{\mathfrak k}$, under the assumption that $(\bigwedge\mathfrak p^*)^{\mathfrak k}$ is an exterior algebra (say on $r$ generators).  This can be successfully pursued in the classical cases using the associative superalgebra structure of \eqref{an}Ê as in Subsection \ref{clgr}. The final outcome, proved by  Dolce in \cite{Dolce} when $\g$ is simple, is that  $(\g\otimes\bigwedge\mathfrak p^*)^{\mathfrak k}$
is free of rank $4r$ over the exterior algebra generated by the $r-1$ invariants of degree less then the maximal one. Notice that our Theorem \ref{main}  fits into this picture by considering as $\g$ two copies of a simple Lie algebra switched by the flip involution.
\vskip10pt
\section{Appendix} Formulas for invariants of exceptional groups  can be found in several places, except case $E_8$ for which there are computer programs but  no clear references.  A possible method of computation that we partly followed  appears in a paper of Lee \cite{L}. We assume we are in a case in which the exponents are all distinct.
Let $\mathfrak h$ and $W$  be as usual.  For $a\in\mathfrak h^*$ and $m\in\mathbb N$ define $$p_m(a):=\sum_{w\in W} w(a^m)=\sum_{w\in W} w(a)^m.$$
\begin{lemma} Given an exponent  $m_i$ 
the set of $a$ for which $p_{m_i+1}(a)$  is a generator  is a non--empty open Zariski set.
\end{lemma}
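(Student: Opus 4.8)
The plan is to exhibit, for a given exponent $m_i$, at least one element $a\in\mathfrak h^*$ for which $p_{m_i+1}(a)$ is a generator of $R=\mathbb C[\mathfrak h]^W$, and then invoke a dimension/openness argument. Here $p_{m_i+1}$ is a $W$-invariant polynomial function on $\mathfrak h^*$, but as $a$ varies $p_{m_i+1}(a)$ is a $W$-invariant polynomial function on $\mathfrak h$ of degree $m_i+1$, so the natural framework is to regard $a\mapsto p_{m_i+1}(a)$ as a polynomial map from $\mathfrak h^*$ to $R_{m_i+1}$, the degree $m_i+1$ graded piece of $R$, and then compose with the linear projection $R_{m_i+1}\to M_{m_i+1}=R_{m_i+1}/((R^+)^2)_{m_i+1}$. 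The point is that $M_{m_i+1}$ is one-dimensional (the degree $m_i+1$ generator is essentially unique since the exponents are assumed distinct), so ``$p_{m_i+1}(a)$ is a generator'' is precisely the condition that this composite map $a\mapsto \overline{p_{m_i+1}(a)}\in M_{m_i+1}\cong\mathbb C$ is nonzero at $a$.

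First I would observe that $a\mapsto \overline{p_{m_i+1}(a)}$ is itself a polynomial function of $a$ on $\mathfrak h^*$ (homogeneous of degree $m_i+1$ in $a$), since $p_{m_i+1}(a)=\sum_{w\in W}w(a)^{m_i+1}$ is polynomial in $a$ with coefficients in $R_{m_i+1}$ and the projection to $M_{m_i+1}$ is linear. Therefore the set of $a$ where it is nonzero is automatically Zariski open; the whole content of the lemma is that this open set is \emph{nonempty}, i.e. that the polynomial $a\mapsto\overline{p_{m_i+1}(a)}$ is not identically zero. Equivalently, one must show $p_{m_i+1}(a)\notin (R^+)^2$ for at least one $a$.

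To prove non-vanishing I would use the structure recalled in \S\ref{iW}: pick the eigenbasis $X_1,\dots,X_r$ of $\mathfrak h^*$ for a Coxeter element, with $X_1$ the $\zeta$-eigenvector, and test on $a=X_1$. Then $p_{m_i+1}(X_1)=\sum_{w\in W}w(X_1)^{m_i+1}$, and I would analyze its leading term in the variable $X_1$ (thinking of $R\subset\mathbb C[\mathfrak h]$ with the weighting that isolates the top power of $X_1$, exactly as in the proof of Proposition \ref{lap}). Since $c$ acts on $X_1$ by $\zeta$, the powers of a Coxeter element contribute terms $\zeta^{j(m_i+1)}X_1^{m_i+1}$, and the relevant sum $\sum_{j}\zeta^{j(m_i+1)}$ over a cyclic subgroup of order $h$ vanishes unless $h\mid m_i+1$, which forces one to be more careful: the honest statement is that the coefficient of the monomial $X_1^{m_i}X_{r-i+1}$ in $p_{m_i+1}(a)$, as a polynomial in $a$, is a nonzero polynomial — because the generators $\psi_j$ of $R$ are characterized by their leading term $X_1^{m_j}X_{r-j+1}$, and $p_{m_i+1}(a)$ lies in $R_{m_i+1}=\mathbb C\psi_i\oplus((R^+)^2)_{m_i+1}$ where every element of $((R^+)^2)_{m_i+1}$ has \emph{no} $X_1^{m_i}X_{r-i+1}$ term (products of lower-degree generators produce only monomials $X_1^{<m_i}\cdot(\cdots)$). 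So ``$p_{m_i+1}(a)$ is a generator'' is equivalent to ``the $X_1^{m_i}X_{r-i+1}$-coefficient of $p_{m_i+1}(a)$ is nonzero'', and the latter is a single nonzero polynomial condition on $a$ provided we can show this coefficient is not the zero polynomial.

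The main obstacle is precisely verifying that this leading coefficient is a nonzero polynomial in $a$, i.e. that $p_{m_i+1}$ does not degenerate into $(R^+)^2$ for all $a$ simultaneously. I would handle this by a direct computation: extract the $X_1^{m_i}X_{r-i+1}$-component of $\sum_{w\in W}w(a)^{m_i+1}$ by writing $w(a)$ in the eigenbasis and expanding the $(m_i+1)$-st power; the coefficient in question is, up to normalization, $\sum_{w\in W}(\text{appropriate products of the }X_1,X_{r-i+1}\text{-coordinates of }w(a))$ evaluated against the dual pairing, and one checks it is a nonzero element of $\mathbb C[\mathfrak h^*]$ by plugging in a generic or a cleverly chosen $a$ (for instance a regular element, or $a$ chosen so that the orbit-sum manifestly does not collapse). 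Alternatively, and perhaps more cleanly, one can avoid the explicit combinatorics: since $R^+$ is generated in degrees $m_1+1,\dots,m_r+1$ and $\{p_{m_i+1}(a):a\in\mathfrak h^*\}$ spans $R_{m_i+1}$ (this is the classical fact that power sums of $W$-orbits span the invariants — or at least one can note that if all $p_{m_i+1}(a)$ lay in $(R^+)^2$ then so would the whole span they generate, contradicting that $R$ needs a generator in degree $m_i+1$). This last observation reduces the lemma to the statement that the $W$-orbit power sums $p_{m_i+1}(a)$, $a\in\mathfrak h^*$, do not all lie in $(R^+)^2$, which is immediate from the fact that they span $R_{m_i+1}\supsetneq((R^+)^2)_{m_i+1}$, and I would present the argument in that form, with the spanning statement justified by a short polarization argument.
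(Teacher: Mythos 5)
Your final argument is correct and is essentially the paper's own proof: the powers $a^{m_i+1}$ span $S^{m_i+1}(\mathfrak h^*)$ by polarization, averaging over $W$ projects onto the invariants, so the orbit power sums $p_{m_i+1}(a)$ span the degree $m_i+1$ invariants and cannot all lie in $(R^+)^2$, while openness is automatic since being a generator is the nonvanishing of a polynomial condition on $a$. The preliminary detour through the eigenbasis $X_1,\dots,X_r$ and the coefficient of $X_1^{m_i}X_{r-i+1}$ is unnecessary once you settle on that spanning argument, which is exactly how the paper proceeds.
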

\begin{proof}
The elements $a^m$ generate linearly the symmetric power   $S^m(\mathfrak h^*)$   and the operator  $x\mapsto 1/|W|\sum_{w\in W} w(x)$ is the projection on the  invariants  therefore it is not possible that for all $a\in\mathfrak h^*$ the element  $p_m(a) =\sum_{w\in W} w(a^m)$ is not a generator.\end{proof} 
Let now $m_i$  be an exponent  and $I_{m_1+1},\ \bar I_{m_1+1}$ be  the space of invariants of degree $m_i+1$ and the subspace of decomposable ones, respectively. Thus   $\dim(I_{m_i+1}/\ \bar I_{m_i+1})=1.$
Let $\pi: I_{m_1+1} \to  I_{m_1+1}/\ \bar I_{m_1+1}$ be the projection.
We thus have a map 
$ a\mapsto  \pi(p_{m_i+1}(a))$  which,    trivializing the quotient, we think of  as an invariant  $Q_i(a)$.
\begin{theorem}
An element  $a\in \mathfrak h^*$ is such that the set  $\{p_{m_i+1}(a)\mid 1\leq i\leq r\}$ is  a system of  generators  if and only if   $Q_i(a)\neq 0$ for every $i$.
\end{theorem}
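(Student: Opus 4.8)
The plan is to reduce the statement to the elementary fact that, in a graded polynomial algebra whose generators have pairwise distinct positive degrees $d_1<\dots<d_r$, a family of homogeneous elements $\theta_1,\dots,\theta_r$ with $\deg\theta_i=d_i$ is a system of generators if and only if each $\theta_i$ is a \emph{generator} in the sense used above, i.e.\ $\theta_i\notin (R^+)^2$. We know from Bourbaki that $R=\mathbb{C}[\mathfrak h]^W$ is polynomial with generators of degrees $m_i+1$, these degrees being distinct under our standing assumption, and that $\dim(I_{m_i+1}/\bar I_{m_i+1})=1$. Applying the fact to $\theta_i=p_{m_i+1}(a)$, which is homogeneous of degree $m_i+1=d_i$, we get that $\{p_{m_i+1}(a)\}$ generates $R$ if and only if the image of $p_{m_i+1}(a)$ in $I_{m_i+1}/\bar I_{m_i+1}$ is non-zero for every $i$, which is exactly the condition $Q_i(a)\neq 0$.

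For the ``if'' direction I would argue by induction on the degree $d$ that every homogeneous invariant of degree $d$ lies in $\mathbb{C}[p_{m_1+1}(a),\dots,p_{m_r+1}(a)]$. If $d$ is not of the form $m_i+1$, then $I_d=\bar I_d$, which is spanned by products of invariants of strictly smaller positive degree and is covered by the inductive hypothesis; if $d=m_i+1$, then $\dim(I_d/\bar I_d)=1$ and the hypothesis $Q_i(a)\neq 0$ forces $p_{m_i+1}(a)\notin\bar I_d$, so $I_d=\bar I_d+\mathbb{C}\,p_{m_i+1}(a)$ and we conclude again. For the ``only if'' direction I would observe that, since the degrees $m_j+1$ are distinct, the only monomial in the $p_{m_j+1}(a)$ of total degree $m_i+1$ that is not manifestly a product of two positive-degree invariants is $p_{m_i+1}(a)$ itself; hence if $Q_i(a)=0$, i.e.\ $p_{m_i+1}(a)\in\bar I_{m_i+1}$, then the degree-$(m_i+1)$ part of the subalgebra generated by the $p_{m_j+1}(a)$ is contained in $\bar I_{m_i+1}\subsetneq I_{m_i+1}$, so they cannot generate $R$.

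Two points deserve care rather than presenting a genuine obstacle. First, everything hinges on the assumption that the exponents are distinct: it is precisely what makes ``being a system of generators'' detectable degree by degree and what makes the monomial bookkeeping in the last step go through, so that for $D_{2n}$, where an exponent is repeated, a separate argument would be needed. Second, the scalar $Q_i(a)$ depends on a choice of trivialization of the line $I_{m_i+1}/\bar I_{m_i+1}$, but only up to a non-zero multiplicative constant, so the condition $Q_i(a)\neq 0$ is intrinsic; combined with the preceding lemma, $Q_i$ is then a non-zero homogeneous polynomial of degree $m_i+1$ in the coordinates of $a$, and the criterion singles out a non-empty Zariski-open set of admissible $a$, as one expects. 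Beyond organizing the induction cleanly there is nothing hard here.
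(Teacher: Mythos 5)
Your proof is correct, but it follows a different route from the paper's. Your degree-by-degree induction is a graded Nakayama argument: since the generator degrees $m_i+1$ are pairwise distinct, a family of homogeneous invariants of exactly these degrees generates $R$ if and only if each member is indecomposable, i.e.\ has non-zero image in $I_{m_i+1}/\bar I_{m_i+1}$, which is precisely $Q_i(a)\neq 0$; both your ``if'' induction and your ``only if'' monomial bookkeeping are sound, and you rightly flag that distinctness of the exponents is essential. The paper, by contrast, declares this equivalence ``tautological'' and devotes its proof to a different mechanism: by the classical Jacobian criterion, the $p_{m_i+1}(a)$ form a system of basic invariants if and only if their Jacobian determinant $J(a)$ is non-zero; since $J(a)$ is anti-invariant of degree $\sum_i m_i=|\Delta^+|$, it is a multiple, depending on $a$, of the Weyl denominator $\prod_{\alpha\in\Delta^+}\alpha$, and this multiple is identified (up to a constant) with $\prod_{i=1}^r Q_i(a)$, yielding $J(a)=\prod_{i=1}^r Q_i(a)\prod_{\alpha\in\Delta^+}\alpha$. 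Your approach is more elementary and self-contained, needing only the graded structure of $R$ and $\dim(I_{m_i+1}/\bar I_{m_i+1})=1$; the paper's approach presupposes the Jacobian criterion but buys the explicit factorization of $J(a)$, which is the ``interesting interpretation'' the authors want and which ties in with the preceding lemma on generic non-vanishing.
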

\begin{proof}
The condition is tautological but it has an interesting    interpretation.
The fact that these elements are generators is equivalent to ask that the determinant of the associated Jacobian matrix  $J(a)$ is non--zero.
This determinant, as function of $a$  is non--zero by the previous  lemma.  By degree and  symmetry considerations we deduce that it is a  multiple, dependent on $a$,   of the Weyl    denominator $\prod_{\alpha\in\Delta^+}\alpha$. This   multiple, as a function of $a$, is  $\prod\limits_{i=1}^r Q_i(a)$ (up to a constant). Hence  $$J(a)=\prod_{i=1}^r Q_i(a)\prod_{\alpha\in\Delta^+}\alpha.$$ 
\end{proof}
In practice the previous Theorem is hard to use, so we will proceed in a different way.  We compute only the first three  invariants, choosing $a$  in the simplest possible way and using  the previous formula. Then compute the remaining ones by applying the construction $u\circ v$ which, by Theorem \ref{gendi}, will provide all the remaining generators (recall that the algebra $Q$ is generated by the two homogeneous elements $x,y$ in all cases). 

In order to check  that the three  invariants we have found are indeed generators, it is enough to show that they are not decomposable. This can be verified by choosing a suitable monomial which never appears in the decomposable invariants of that degree and showing that the invariant we found has non--zero coefficient for that monomial. This is quite easy and we verified it for our choices.
\subsection{Exceptional groups}  Recall the exponents of exceptional groups.      
 
{\centering
       \begin{tabular}{@{} lcr @{}} 
          \multicolumn{2}{c}{} \\
        Type    & Exponents &\qquad h\\\\
        $E_6$     & 1, 4, 5, 7, 8, 11 & \qquad 12 \\\\
                    $E_7$      & 1, 5, 7, 9, 11, 13, 17  & \qquad 18 \\\\
       $E_8$       & 1, 7, 11, 13, 17, 19, 23,  29   & \qquad 30 \\\\
      $F_4$ & 1, 5, 7 ,11  &  \qquad 12 \\\\
      $G_2$ &1, 5 &\qquad 6\\
       \end{tabular}
        \vskip15pt
       \centerline{\small Table 1: exponents for the exceptional types.}
}
       \vskip15pt

  In $G_2,F_4$  there are no particular normalizations involved. For $F_4$ it is enough to define the invariant of degree 12 from the scalar product of those of degree $6,8$.

    In this section we usually will write $A_i$ for an invariant of the root system of degree $i$ and implicitly for the corresponding invariant in $S(\g^*)$.   The invariant of degree 2 will always be taken to correspond to the Killing form.  Then, by the definition of  the composition $a\circ b=(da,db)$ between invariants, we have that  
    $$ A_2\circ A_i= 2i A_i, \forall i.$$  When we pass to the corresponding  elements $P_i=t(A_i)$,   the element $t(A_2)$ is denoted by $P_1$ and has degree 3; formula \eqref{latra}  implies $e(u_1, f_i)=\frac{ t(A_2\circ A_i)}i=2P_i$.
    If we wish to have value 1 for the constants $c_{1,i}$ it is enough to let  $A_2$  correspond to half the  Killing form.\bigskip
    
    \subsection{A sketch of the computations}  In cases $E_6,E_7$ we are computing, on a space of dimension $n=6,7$  and coordinates $x_1,\ldots,x_n$, the scalar product of invariants written as polynomials in the power sums $p_i=\sum\limits_{j=1}^nx_j^i$.  Therefore the scalar product of two such invariants $a,b$  is  $\sum_{i,j}\pd{a}{p_i}\pd{b}{p_j}(dp_i,dp_j)$, so it is necessary to compute a priori $(dp_i,dp_j)=\sum_{h,k}\pd{p_i}{x_h}\pd{p_j}{x_k}(dx_h,dx_k)$.  In each case the quadratic function expressing the scalar product  is of the form  $a p_2+bp_1^2$. This means that the matrix  of the scalar products $(\pd{ }{x_i},\pd{ }{x_j})$ is $a1_n+bM_n$ where $M_n$ is the $n\times n$ matrix with all entries equal to 1, hence $M^2_n=nM_n$.  The matrix  of the scalar products $(dx_i,dx_j)$ is 
\begin{align*}(a1_n+bM_n)^{-1}&=a^{-1}(1_n-\frac{b}{a}(1+\frac{bn}{a})^{-1}  M_n) \\
(a1_n+bM_n)^{-1}&=a^{-1}(1_n-b(a+ bn )^{-1}  M_n).\end {align*}  In the two cases we have
    $$\tfrac23(1-\tfrac19M_6),\quad  \tfrac12(1-\tfrac19M_7),$$
  respectively,   so that $$\quad p_i\circ p_j=\begin{cases}
\frac{2ij}3(p_{i+j-2}-\frac19p_{i-1}p_{j-1})\quad &\text{ in type $E_6$,}\\\\
\frac{ij}2(p_{i+j-2}-\frac19p_{i-1}p_{j-1})\quad &\text{ in type $E_7$.}
\end{cases}$$

In case $E_8$ we have that the scalar product is $p_2=\sum\limits_{i=1}^8x_i^2$. The invariants are polynomials in the power sums $p_{2i},\ i=1,\ldots,7$  and the {\em Pfaffian:} $\mathcal P=\prod_{i=1}^8x_i$, hence  (cf. \eqref{Dn}): 
\begin{equation*}p_i\circ p_j =
 i j\,  p_{i+j-2},\quad p_i\circ\mathcal P= i\,p_{i-1}\mathcal P,\end{equation*}
\begin{align*}\mathcal P\circ \mathcal P&=s_7(x_1^2,\ldots,x_8^2)=\tfrac{1}{5040}p_2^7 - \tfrac1{240} p_2^5 p_{4} + \tfrac1{48} p_2^3 p_{4}^2 - \tfrac1{48} p_2 p_{4}^3 + 
 \tfrac1{72} p_2^4 p_{6} - \tfrac1{12} p_2^2 p_{4} p_{6} \\&+ \tfrac1{24} p_{4}^2 p_{6} + 
\tfrac1{18} p_2 p_{6}^2 - \tfrac1{24} p_2^3 p_{8} + \tfrac18 p_2 p_{4} p_{8} - \tfrac1{12} p_{6} p_{8} + 
 \tfrac1{10} p_2^2 p_{10} - \tfrac1{10} p_{4} p_{10} - \tfrac16 p_2 p_{12} + \tfrac17p_{14}.\end{align*}
 With these formulas and the expressions of the basic invariants given below all computations can be easily reproduced.   \bigskip

{\it Formulas for $E_6$.} In this case the Weyl group contains the symmetric group $S_6$ as Weyl group of a root subsystem of type $A_5$, and the restriction of the reflection representation to  $S_6$ can be identified to the 6--dimensional permutation representation. Therefore we can express the  $E_6$ invariants as polynomials in symmetric functions; we will use the power sums $p_i$.   In \cite{KM} the invariants are expressed through elementary symmetric functions. Changing variables we obtain  for the invariants of degrees 2,5,6 the following polynomials
\begin{align*}
A_2= &\tfrac{p_1^2}2+\tfrac{3p_2}2\\
A_5=&\tfrac{11}{20} p_1^5  - 6 p_1^3 p_2 + \tfrac{27}{4} 
    p_1 p_2^2 + \tfrac{27}{2} p_1^2 p_3 - \tfrac{27}2 p_2 p_3 - \tfrac{27}2 p_1 p_4 + \tfrac{81}5p_5 \\
    A_6=&\tfrac{25}8 p_1^6  - \tfrac{99}8 p_1^4 p_2 - \tfrac{297}8 p_1^2 p_2^2 + 
\tfrac{243}8 p_2^3 + 270 p_1 p_2 p_3 - 135 p_3^2 + \tfrac{135}{2} p_1^2 p_4 - \tfrac{405}{2} p_2 p_4\\ &- 324 p_1 p_5 + 324 p_6 
\end{align*}   
Recall that  write  $A\cong B$  when two invariants are congruent modulo $(R^+)^2$. 
Computing the scalar products of these basic invariants we define  $$ A_8:=A_5\circ A_5, \quad   A_9:=A_5\circ A_6, \quad  A_{12}:=A_5\circ A_9$$
and we have
$$8A_5\circ A_9\cong 9A_6\circ A_8$$
The corresponding  constants $d_{i,j}$ are given in in the following table.

\begin{center}\vskip5pt
 \begin{tabular}{c | c c c c}
&5&6&8&9\\
 \hline\\
5&1&1&0&1\\\\
6&1&0&$\frac89$&0\\\\
8&0&$\frac89$&0&0\\\\
9&1&0&0&0 
\end{tabular}
\vskip15pt
 \end{center}
 \centerline{\small Table 2: coefficients $d_{ij}$ for $E_6$.}
\medskip
\
{\it Formulas for $E_7$.} In this case the Weyl group contains the symmetric group $S_7$ as Weyl group of a  root subsystem of type $A_6$, and the restriction of the reflection representation to  $S_7$ can be identified to the 7--dimensional permutation representation. Therefore we can express the  $E_6$ invariants as polynomials in symmetric functions; we will use the power sums $p_i$.   In \cite{KM} the invariants are expressed through elementary symmetric functions. We perform the change of variables obtaining for the invariants of degrees 2,6,8. One can normalize the two invariants of degrees 6,8 so that:
\begin{align*}
A_2=&p_1^2 + 2 p_2\\
A_6=&\tfrac{10}3(p_1^6 - 12 p_1^4 p_2 + 36 p_1^2 p_2^2 - 6 p_2^3 + 40 p_1^3 p_3 - 
 120 p_1 p_2 p_3 + 40 p_3^2 - 60 p_1^2 p_4 + 60 p_2 p_4 +\\& 
 144 p_1 p_5 - 96 p_6) \\A_8=&\tfrac{10}7(p_1^8 + 224 p_1^6 p_2 - 1680 p_1^4 p_2^2 + 840 p_1^2 p_2^3 + 
 420 p_2^4 - 1568 p_1^5 p_3 + 12320 p_1^3 p_2 p_3 - \\&
 12320 p_1^2 p_3^2 - 4480 p_2 p_3^2 + 5040 p_1^4 p_4 - 
 18480 p_1^2 p_2 p_4 - 3360 p_2^2 p_4 + 20160 p_1 p_3 p_4 - \\&
 1680 p_4^2 - 18816 p_1^3 p_5 + 12096 p_1 p_2 p_5  + 2688 p_3 p_5 + 
 33600 p_1^2 p_6 + 6720 p_2 p_6 - 34560 p_1 p_7)\end{align*}
 We have normalized the invariants $ A_2, A_6,A_8$ so that,  computing the scalar products of these basic invariants, we get $$ A_{10}:=A_6\circ A_6, A_{12}:=A_6\circ A_8, A_{14}:=A_6\circ A_{10}\cong  
A_8\circ A_8,\ A_{18}:=A_6\circ A_{14}$$ and we verify that 
 $$  6A_{18}\cong 7 A_8\circ A_{12},\   5 A_{18}\cong 7 A_{10}\circ A_{10}$$
The constants $d_{i,j}$  are given in in the following table.

\begin{center}\vskip5pt
 \begin{tabular}{c | c c c c c}
& 6& 8&10&12&14\\
\hline\\
6&1&1&1&0&1\\\\
8&1&1& 0&$\frac{6}{7}$&0\\\\
10&1&0&$\frac{5}{7}$&0&0\\\\
12&0&$\frac{6}{7}$&0&0&0\\\\
14&1&0&0&0&0\\\\
\end{tabular}
\vskip15pt
 \end{center}
 \centerline{\small Table 3: coefficients $d_{ij}$ for $E_7$.}

 \bigskip

 Formulas for $E_8:$
The extended diagram of $E_8$ contains a subdiagram of type $D_8$,  so we can write the invariants of $E_8$ as polynomials in the invariants of $D_8$, which are generated by $p_{2i},\ i=1,\ldots,7$  and by the Pfaffian $\mathcal P$.

We have computed the invariants $A_8,A_{12}$ of degree 8 and 12 by the method of Lee \cite{L} and the others by taking the scalar products  $(da,db)$ starting from these two invariants.   The resulting invariants are generators by a simple inspection of their leading terms.\medskip

\begin{align*}
A_2=&p_2\\
A_8= &-10080 \mathcal P - 105 p_2^2 p_4 + 105 p_4^2 + 168 p_2 p_6 - 180 p_8\\
A_{12}= &-103950 \mathcal P p_2^2 +  \tfrac{10395}{64} p_2^6  + 41580 P p_4 - 
 \tfrac{51975}{32} p_2^4 p_4 + \tfrac{51975}{16} p_2^2 p_4^2 - \tfrac{5775 }{8}p_4^3  \\&+ 
 3465 p_2^3 p_6 - 6930 p_2 p_4 p_6 + 2772 p_6^2 - 
 \tfrac{25245}{4} p_2^2 p_8 + \tfrac{10395}{2} p_4 p_8 + 8316 p_2 p_{10} - 
 7560 p_{12}\end{align*}\medskip

We have the following  relations:
$$  A_8\circ (A_8\circ A_{12}) \cong\tfrac97  (A_8\circ A_8)\circ A_{12}    $$$$ (A_8\circ A_8)\circ(A_8\circ A_{12}) \cong\tfrac34  A_8\circ ((A_8\circ A_8)\circ A_{12}) ,\   A_{12}\circ (A_8\circ (A_8\circ A_8) )\cong  \tfrac56  A_8\circ ((A_8\circ A_8)\circ A_{12}) , $$
that is the invariant of order 24 computed in two different ways gives two different values (modulo squares) as well as the invariant of order  30 computed in  
three  different ways.

Set 
\begin{align*}A_{14}&:= A_8\circ A_{8} ,\\A_{18}&:= A_8\circ A_{12} ,\\
A_{20}&:=  A_8\circ (A_8\circ A_8) = A_8\circ A_{14} ,\\ A_{24}&:= (A_8\circ A_8)\circ A_{12} \cong  A_{12}\circ A_{14} ,\\A_{30}&:=  A_8\circ ((A_8\circ A_8)\circ A_{12}) \end{align*}
so that 
$$ A_{14}\circ A_{18} \cong  \tfrac34A_{30},\quad  A_{20}\circ  A_{12} \cong \tfrac56 A_{30}.$$
We deduce the matrix of the constants $d_{i,j}$, which is displayed  in the following table.\vskip5pt

\begin{center}\vskip6pt
 \begin{tabular}{c | c c c c c c c }
&  8&12&14&18&20&24& \\
\hline\\
 8&1&1&1&$\frac 97$ & 0&1& \\\\
12&1&0&1&0&$\frac56$&0& \\\\
14&1&1& 0&$\frac34$&0&0& \\\\
18&$\frac97$&0&$\frac34$&0&0&0& \\\\
20& 0&$\frac56$&0&0&0&0& \\\\
24&1& 0&0&0&0&0& 
\end{tabular}
\vskip15pt
 \end{center}
  \centerline{\small Table 4: coefficients $d_{ij}$ for $E_8$.}
  \vskip10pt
A  full list of the invariants  and the code to compute them is available in  \cite{CI}.

\vskip5pt
\footnotesize{
\vskip20pt
Dipartimento di Matematica, Sapienza Universit\`a di Roma, P.le A. Moro 2,
00185, Roma, Italy; 
\par\noindent
Email addresses:
\par\noindent{\tt deconcin@mat.uniroma1.it}
\par\noindent{\tt papi@mat.uniroma1.it}
\par\noindent{\tt procesi@mat.uniroma1.it}
}

\begin{thebibliography}{100}
\bibitem{B} Y.~Bazlov,  {\em Graded Multiplicities in the Exterior Algebra}, Adv. Math. {\bf 158}, 129--153 (2001)
\bibitem{Bou} N.~Bourbaki,
{  Groupes et alg\`ebres de Lie}
Hermann, Paris, 1968
\bibitem{bps} M. ~Bresar, C~Procesi, S. ~Spenko, {\em Functional identities on matrices and the Cayley--Hamilton polynomial,} arXiv:1212.4597, to appear in {\it Advances in Mathematics}  
\bibitem{Car} 
 H.~ Cartan,   {\em La transgression dans un groupe de Lie et dans un espace fibr\'e principal. } Colloque de topologie (espaces fibr\'es), Bruxelles, 1950, pp. 57Ð71. Georges Thone, Li\`ege; Masson et Cie., Paris, 1951.
\bibitem{Chev} C.~Chevalley, {\em  The Betti numbers of the exceptional Lie groups}, in ``Proc. International
Congress of Mathematicians'', 1950,'' Vol. II, pp. 21--24.
\bibitem{CI} C~De Concini, P~Papi, C~Procesi,  Invariants of $E_8$, http://www1.mat.uniroma1.it/people/papi/E8.zip
\bibitem{DCPPM} C~De Concini, P~M\"oseneder Frajria, P~Papi, C~Procesi,  
{\em On special covariants in the exterior algebra of a simple Lie algebra}, Rend. Lincei Mat. Appl. \textbf{25} (2014), 331--334.
\bibitem{Dolce}S.~Dolce, {\em   On certain modules of covariants in exterior algebras}, to appear in {\it Algebras and Representation Theory}.
\bibitem{KM}  S.~Katz, D.~Morrison, {\em Gorenstein threefold singularities with small resolutions via invariant theory for Weyl groups.} J. Algebraic Geom. 1 (1992), no. 3, 449Ð530
\bibitem{Given} A.B. Givental', \emph{Convolution of invariants of groups generated by reflections that are associated to simple singularities of functions.} (Russian)
Funktsional. Anal. i Prilozhen. \textbf{14} (1980), no. 2, 4--14
\bibitem{Kostant} B.~Kostant, {\em Eigenvalues of a Laplacian and commutative Lie subalgebras},
Topology,
{\bf 3} (1965), 147--159.
\bibitem{K}
B.~Kostant, {\em Clifford algebra analogue of the Hopf-Koszul-Samelson theorem, the
$\rho$-decomposition  $C(\g)=End\,V_\rho\otimes C(P)$, and the $\g$-module structure of $\bigwedge \g$}, Adv. Math.
{\bf 125} (1997), 275--350.
\bibitem{Ko} J. L. Koszul, {\em Homologie et cohomologie des alg\`ebres  de Lie}, Bull. Soc. Math. Fr. \textbf{78}
(1950), 65--127.
\bibitem{L}C.~Y~Lee,{\em  \ 
 Invariant polynomials of Weyl groups and applications to the centres of universal enveloping algebras.}
Canad. J. Math. 26 (1974), 583Ð592. 
\bibitem{Mein} E.~Meinrenken, {\em
Clifford algebras and Lie theory.}
Ergebnisse der Mathematik und ihrer Grenzgebiete. 3. Folge. A Series of Modern Surveys in Mathematics, 58. Springer, Heidelberg, 2013
\bibitem{P} C.~ Procesi {\em
On the theorem of Amitsur--Levitzki,} arXiv:1308.2421 (to appear in Israel Journal of Mathematics), doi: 10.1007/s11856-014-1118-8
\bibitem{R}
M.~Reeder, {\em Exterior powers of adjoint representation}, Canad. J. Math. {\bf 49} (1997),
133-159.
\bibitem{Saito}ÊK.~Saito,  T.~Yano, J.~Sekiguchi,{\em
On a certain generator system of the ring of invariants of a finite reflection group.} 
Comm. Algebra 8 (1980), no. 4, 373--408. 
\bibitem{Sl} P.~Slodowy,
    {\em Simple singularities and simple algebraic groups},
   {Lecture Notes in Mathematics},
 {815},
{Springer},
  {Berlin},
   {1980},
{x+175}



\end{thebibliography}
\end{document}